\documentclass{./Article-eng}

\usepackage[all]{xy}
\usepackage{hyperref}
\hypersetup{colorlinks,allcolors=blue}
\usepackage{tikz-cd}%implies
%\usepackage{url}
%\urlstyle{rm}

\makeatletter

\newcommand{\Rmnum}[1]{\expandafter\@slowromancap\romannumeral #1@}
\makeatother

%\usepackage{csquotes}
%\usepackage{authblk}

%\newcommand{\pullweil}{^{[\ast]}}
 % ff=flat+free

%\DeclareMathOperator{\Zar}{Zar} % topologie de Zariski

 % base point free part of a linear system
 % fixed part of a linear system

\let\Gamma\varGamma
\let\Delta\varDelta

%%%%%%%%%%%%%%%%%%%%%%%%%%%%%%%%%%%%%%%%%%%%%%%%%%%%%%%%%%%%

%%%%%%%%%%%%%%%%%%%%%%%%%%%%%%%%%%%%%%%%%%%%%%%%%%%%%%%%%%%%

\begin{document}
%%%% Titre
\title[Alg. fib. spaces w/ strictly nef rel. anti-log can. divisor]{Algebraic fibre spaces with strictly nef relative anti-log canonical divisor}

%\title{On singular projective varieties with strictly nef anti-log canonical divisor: the case of log terminal singularities}
%\author{
%Jie Liu\thanks{Institute of Mathematics, Academy of Mathematics and Systems Science, Chinese Academy of Sciences, Beijing, 100190, China. E-mail: {\tt jliu@amss.ac.cn}},\; Wenhao Ou\thanks{Institute of Mathematics, Academy of Mathematics and Systems Science, Chinese Academy of Sciences, Beijing, 100190, China. E-mail: {\tt wenhaoou@amss.ac.cn}},\; Juanyong Wang\thanks{HCMS, Academy of Mathematics and Systems Science, Chinese Academy of Sciences, Beijing, 100190, China. E-mail: {\tt juanyong.wang@amss.ac.cn}},\; Xiaokui Yang\thanks{Department of Mathematics and Yau Mathematical Sciences Center, Tsinghua University, Beijing, 100084, China. E-mail: {\tt xkyang@mail.tsinghua.edu.cn}},\; Guolei Zhong\thanks{Department of Mathematics, National University of Singapore, 10 Lower Kent Ridge Road, Singapore 119076, Republic of Singapore. E-mail: {\tt zhongguolei@u.nus.edu}}
%}

\author{
Jie Liu
%\thanks{jliu@amss.ac.cn}
}
\address{
Jie Liu\\
Institute of Mathematics, Academy of Mathematics and Systems Science, Chinese Academy of Sciences, Beijing, 100190, China}
\email{jliu@amss.ac.cn}

\author{
Wenhao Ou
%\thanks{wenhaoou@amss.ac.cn}
}
\address{
Wenhao Ou\\
Institute of Mathematics, Academy of Mathematics and Systems Science, Chinese Academy of Sciences, Beijing, 100190, China}
\email{wenhaoou@amss.ac.cn}

\author{
Juanyong Wang
%\thanks{juanyong.wang@amss.ac.cn}
}
\address{
Juanyong Wang\\
HCMS, Academy of Mathematics and Systems Science, Chinese Academy of Sciences, Beijing, 100190, China.}
\email{juanyong.wang@amss.ac.cn}
%\affil{Institute of Mathematics, Academy of Mathematics and Systems Science, Chinese Academy of Sciences, Beijing, 100190, China}

\author{Xiaokui Yang
%\thanks{xkyang@mail.tsinghua.edu.cn}
}
\address{
Xiaokui Yang\\
Department of Mathematics and Yau Mathematical Sciences Center, Tsinghua University, Beijing, 100084, China}
\email{xkyang@mail.tsinghua.edu.cn}
%\affil{Department of Mathematics and Yau Mathematical Sciences Center, Tsinghua University, Beijing, 100084, China}

\author{Guolei Zhong
%\thanks{zhongguolei@u.nus.edu}
}
\address{
Guolei Zhong\\
Department of Mathematics, National University of Singapore, 10 Lower Kent Ridge Road, Singapore 119076, Republic of Singapore}
\email{zhongguolei@u.nus.edu}
%\affil{Department of Mathematics, National University of Singapore, 10 Lower Kent Ridge Road, Singapore 119076, Republic of Singapore}

\date{}

%\begin{document}
\maketitle
%\paragraph{Abstract:} 
\begin{abstract}
Let $(X,\Delta)$ be a projective klt pair, and  $f:X\to Y$  a fibration to a smooth projective variety $Y$ with strictly nef relative anti-log canonical divisor $-(K_{X/Y}+\Delta)$.
We prove that $f$ is a locally constant fibration  with rationally connected fibres, and the base  $Y$ is a canonically polarized  hyperbolic projective manifold. In particular,  when  $Y$ is a single point, we establish that  $X$ is rationally connected. 
Moreover, when $\dim X=3$ and $-(K_X+\Delta)$ is strictly nef, we prove that $-(K_X+\Delta)$ is ample, which confirms the singular version of the Campana-Peternell conjecture for threefolds.

\end{abstract}
%\newline
\tableofcontents

\section{Introduction}
\label{sec_intro}
%\addcontentsline{toc}{section}{Introduction}

In this paper, we work over the field  $\mathbb{C}$ of complex numbers. 
Let $X$ be a projective variety.  
Recall that a $\QQ$-Cartier divisor $L$ over $X$ is said to be \emph{strictly nef} (resp. \emph{nef}) if  $L\cdot C>0$ (resp. $L\cdot C\geqslant 0$) for every (complete) curve $C$ on $X$. 
It is clear that ample divisors are always strictly nef, but strictly nef divisors are not necessarily ample (e.g. \cite[Example 10.6]{Har70}). It is still a challenge to
investigate the relationship between these two concepts.  As a major problem along this line,  Campana and Peternell conjectured
in \cite{CP91} that the  strict nefness and ampleness are equivalent for the anti-canonical divisor $-K_X$ of a smooth projective variety $X$.
In the 1990s, this conjecture was intensively studied in lower dimensions and has been confirmed in dimension $\leqslant 3$ (cf.~\cite{Mae93} and \cite{Ser95}).
The singular version of this conjecture has also been proved for threefolds with at worst canonical singularities in \cite[Theorem 3.9]{Ueh00}. For more results motivated by the Campana-Peternell conjecture, we  refer the readers to \cite{CCP08, LOY19, LOY20, LOY21}   and the references therein.\\

In this paper, we  study the geometric structures of a projective klt pair $(X,\Delta)$ admitting a fibration $X\to Y$ with strictly nef  relative anti-log canonical divisor $-(K_{X/Y}+\Delta)$. 
First, we propose the following question, which is the singular version of Campana and Peternell's conjecture:

%, i.e., $X$ is a normal projective variety, $\Delta$ is an effective $\mathbb{Q}$-divisor such that $K_X+\Delta$ %is $\mathbb{Q}$-Cartier, and $(X,\Delta)$ has only klt singularities. 

\begin{conj}\label{main-conj-singular-ample}
Let $(X,\Delta)$ be a projective klt pair.
If $-(K_X+\Delta)$ is strictly nef, 
then $-(K_X+\Delta)$ is  ample.
\end{conj}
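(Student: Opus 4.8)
The plan is to reduce Conjecture~\ref{main-conj-singular-ample} to a bigness statement for rationally connected klt pairs and then to approach that statement through the minimal model program.

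\emph{Step 1 (reduction to the rationally connected case).} I would first apply the main structure theorem of the paper with $Y$ a single point, so that $-(K_{X/Y}+\Delta)=-(K_X+\Delta)$ is the given strictly nef divisor; the theorem then gives that $X$ is rationally connected. In particular $X$ has rational singularities, and $K_X+\Delta$ is not pseudo-effective: the strictly nef divisor $-(K_X+\Delta)$ has strictly positive intersection with the movable class of a general complete-intersection curve, so $K_X+\Delta$ violates the Boucksom--Demailly--Paun--Peternell movable-curve criterion. From now on we assume that $X$ is rationally connected.

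\emph{Step 2 (reduction to bigness).} Set $D:=-(K_X+\Delta)$; I claim it suffices to prove that $D$ is big. Indeed, if $D$ is nef and big, then for every integer $a\geq0$ the divisor $aD-(K_X+\Delta)=(a+1)D$ is nef and big, so the base-point-free theorem applies to the klt pair $(X,\Delta)$ and shows that $D$ is semiample. The induced morphism $\varphi\colon X\to X_{\mathrm{can}}$ onto the anti-log-canonical model is then birational and contracts exactly the curves $C$ with $D\cdot C=0$; since $D$ is strictly nef there are no such curves, so $\varphi$ is a finite birational morphism onto a normal variety, hence an isomorphism, and $D$ is ample. (Equivalently, it would be enough to establish the Serrano-type ampleness of $K_X+\Delta+tD$ for some $t>1$, since then $K_X+\Delta+tD=(t-1)D$ is ample; this is Serrano's conjecture for the strictly nef class $D$.)

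\emph{Step 3 (bigness via the minimal model program).} Since $K_X+\Delta$ is not pseudo-effective, a $(K_X+\Delta)$-MMP (Birkar--Cascini--Hacon--McKernan) terminates with a Mori fibre space $g\colon X'\to T$; here $X'$, being birational to $X$, and hence $T=g(X')$, are still rationally connected, and the general fibre of $g$ is a log Fano pair. If $\dim T=0$, then $D':=-(K_{X'}+\Delta')$ is ample on $X'$ (being relatively ample over the point $T$), and one would transfer bigness back along the birational contraction $X\dashrightarrow X'$, using that on a common resolution $p^*(K_X+\Delta)-q^*(K_{X'}+\Delta')$ is effective and exceptional over $X'$ while $p^*D$ is nef. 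If $\dim T\geq1$, one would instead induct on $\dim X$, combining the relative ampleness of $-(K_{X'}+\Delta')$ over $T$ with a strictly nef analysis on the rationally connected base $T$ so as to upgrade fibrewise ampleness to global bigness of $D'$, and then descend to $X$ as above.

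\emph{The main obstacle.} The serious difficulty is to control the MMP and to conclude bigness in arbitrary dimension: there is no a priori reason that a strictly nef --- or even merely nef --- anti-log-canonical class should remain nef after a divisorial contraction or a flip (a flipped curve may meet the transform of $D$ in degree $\leq0$), and, conversely, bigness need not descend through an arbitrary birational contraction. In dimension three this can be pushed through using the complete classification of extremal divisorial and flipping contractions on klt threefolds together with the case analysis of Serrano, Maeda and Uehara (cf.\ \cite{Ser95}, \cite{Mae93}, \cite{Ueh00}), which is precisely why the unconditional result of the paper is limited to $\dim X=3$. In higher dimensions the problem reduces, in essence, to Serrano's conjecture for the class $-(K_X+\Delta)$, which remains open.
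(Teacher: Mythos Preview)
The statement is a \emph{conjecture}, and the paper does not prove it in general; it is established only for threefolds (Theorem~\ref{mainthm-ample-3klt}). You correctly acknowledge this in your final paragraph, so your ``proof'' is really a strategy outline with an honest admission that Step~3 is incomplete. That is fine, but it is worth stressing that Steps~1 and~2 are unproblematic (Step~2 is exactly the paper's Lemma~\ref{lem-big-ample}) and the entire content of the conjecture lies in Step~3.

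Your Step~3 differs substantially from what the paper does in dimension~3. You propose running a $(K_X+\Delta)$-MMP to a Mori fibre space and then transporting bigness back along the birational map. As you note, this faces a real obstruction: on a common resolution one has $p^*D=q^*D'-G$ with $G\geq0$, so bigness of $D'$ does \emph{not} yield bigness of $D$; and strict nefness is destroyed by the very first contraction. The paper sidesteps this entirely. It never runs an MMP on $X$. Instead it passes to a $\mathbb{Q}$-factorial terminalization $\tau:(Y,\Gamma)\to(X,\Delta)$ (a crepant, hence harmless, modification) and argues as follows: if $\Gamma\neq0$, then the generalized-pair nonvanishing theorem of Han--Liu shows that $-(K_Y+\Gamma)$, hence $-(K_X+\Delta)$, is numerically equivalent to a nonzero effective divisor; if $\Gamma=0$, one is in the canonical case and a Riemann--Roch / cohomological argument (using $\chi(\scrO_Y)>0$ from rational connectedness and $-K_Y\cdot\hat c_2(Y)\geq0$) forces $\nu(-K_Y)=1$, after which a surface restriction plus Miyaoka's criterion again produces an effective representative. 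Once $-(K_X+\Delta)$ is num-effective, Proposition~\ref{prop-klt-eff-ample} --- a self-contained surface/Hodge-index argument --- upgrades strict nefness to bigness, and your Step~2 finishes.

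So the conceptual gap in your plan is not just that the MMP is hard to control (which you say), but that the paper's successful threefold argument avoids the MMP altogether and instead reduces to \emph{num-effectivity} of $-(K_X+\Delta)$ via generalized-pair and cohomological techniques. Even in dimension~3 your Mori-fibre-space route would require reproving the Serrano/Uehara case analysis from scratch, whereas the paper's route leverages the much cleaner input of Han--Liu.
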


\noindent 
Recently, Li, the second author and the fourth author proved in \cite{LOY19} that  smooth projective varieties with strictly nef   anti-canonical divisors  are rationally connected (\cite[Theorems 1.2 and 1.3]{LOY19}). By the well-known result of Campana and  Koll\'ar-Mori-Miyaoka (\cite{Cam92,KoMM92}),  this provides important evidences for an affirmative answer to the Campana and Peternell conjecture, i.e.,  the smooth case of {\hyperref[main-conj-singular-ample]{Conjecture \ref*{main-conj-singular-ample}}} in all dimensions.
As motivated by  \cite[Theorem 1.3]{LOY19}, we consider a particular case of {\hyperref[main-conj-singular-ample]{Conjecture \ref*{main-conj-singular-ample}}} in the singular pair setting (cf.~\cite{Zhang06}). 

\begin{conj}\label{main-conj-singular}
Let $(X,\Delta)$ be a projective klt pair.
If $-(K_X+\Delta)$ is strictly nef, 
then $X$ is  rationally connected.
\end{conj}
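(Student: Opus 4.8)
The plan is to show that the maximal rationally connected (MRC) fibration of $X$ has $0$-dimensional target, which is precisely the rational connectedness of $X$. Since $-(K_X+\Delta)$ is strictly nef it is in particular nef, and $(K_X+\Delta)\cdot C<0$ for every curve $C$; combined with $\Delta\geq 0$ this shows that $X$ is uniruled. Let $\phi\colon X\dashrightarrow Z$ be the MRC fibration. Replacing $X$ by a resolution $X'$ of the graph of $\phi$, we may assume that $\phi$ is a morphism $f\colon X'\to Z$ with $X'$ smooth, $g\colon X'\to X$ birational, $Z$ smooth, and general fibre of $f$ rationally connected; put $K_{X'}+\Delta_{X'}=g^{*}(K_X+\Delta)$, so that $(X',\Delta_{X'})$ is sub-klt and $-(K_{X'}+\Delta_{X'})=g^{*}\bigl(-(K_X+\Delta)\bigr)$ is nef, with nef restriction to the general fibre. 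By Graber--Harris--Starr the base $Z$ is not uniruled, hence $K_Z$ is pseudo-effective. It remains to prove $\dim Z=0$.

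The core of the argument is a structure theorem for $f$, in the spirit of Cao--H\"oring's decomposition theorem for projective manifolds with nef anti-canonical bundle, but adapted to the klt-pair setting. The input is that $-(K_{X'}+\Delta_{X'})$ is nef with nef restriction to the rationally connected general fibre; the tools are positivity properties of the relative anti-log-canonical bundle and of the direct images attached to $f$ (singular Hermitian metrics, weak positivity of Viehweg type, and a canonical bundle formula controlling the discriminant divisor and the moduli part of $f$). What one wants to extract is: after a finite \'etale base change and a further birational modification one may assume that $f$ is a locally constant fibration over a smooth projective variety $Z$ with $-K_Z$ nef; together with the pseudo-effectivity of $K_Z$ this forces $K_Z\equiv 0$, while local constancy forces the boundary $\Delta_{X'}$ to be ``constant'' along $f$, that is, a pull-back from the fibre on the universal cover. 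Establishing this is the main obstacle: it requires running the Hodge-theoretic and positivity machinery of Cao--H\"oring in the presence of the boundary $\Delta$, proving that the discriminant of $f$ is trivial and its moduli part numerically trivial, and verifying that the local-constancy of the underlying family is compatible with $\Delta$. When $\Delta=0$ and $X$ is smooth this is Cao--H\"oring's theorem, and one recovers \cite[Theorems~1.2 and 1.3]{LOY19}.

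Granting the structure theorem, suppose for contradiction that $\dim Z\geq 1$. Choose a sufficiently general curve $B\subset Z$ and a finite \'etale cover $B'\to B$ such that the monodromy of $f$ over $B$ becomes trivial, so that $X'\times_Z B'\cong B'\times F$ with $K_{(B'\times F)/B'}=\operatorname{pr}_F^{*}K_F$ and the pull-back of $\Delta_{X'}$ equal to $\operatorname{pr}_F^{*}\Delta_F$. For a constant section $C_0=B'\times\{x\}$ with $x\in F$ a general point, we get
\[
-(K_{X'/Z}+\Delta_{X'})\cdot C_0=\operatorname{pr}_F^{*}\bigl(-(K_F+\Delta_F)\bigr)\cdot C_0=0 .
\]
Since $K_{X'}=K_{X'/Z}+f^{*}K_Z$ and $K_Z\equiv 0$, this yields $-(K_{X'}+\Delta_{X'})\cdot C_0=0$, hence $g^{*}(K_X+\Delta)\cdot C_0=0$. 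Choosing $B$ and $x$ general we may assume that $C_0$ avoids the $g$-exceptional locus, so that $g$ is an isomorphism near $C_0$ and its image $\overline{C}=g(C_0)$ is a curve in $X$ with $-(K_X+\Delta)\cdot\overline{C}=0$, contradicting the strict nefness of $-(K_X+\Delta)$. (When the monodromy of $f$ over $B$ has infinite image one argues similarly, replacing the constant section by a positive-dimensional subvariety of $X'$ lying over $B$ on which $-(K_{X'/Z}+\Delta_{X'})$ is numerically trivial, produced from the identity component of the Zariski closure of the monodromy group acting on $F$.) Therefore $\dim Z=0$ and $X$ is rationally connected.
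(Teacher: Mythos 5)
Your overall strategy (reduce to the MRC fibration, show its base is a point by producing a horizontal curve on which the anti-log-canonical is numerically trivial) matches the paper's, and you correctly identify that the first hard input is a Cao--H\"oring--type structure theorem adapted to klt pairs; the paper obtains this from \cite[Theorem 1.1]{MW21} after passing to a quasi-\'etale cover (not a resolution of the MRC graph, which would only yield a sub-klt pair). But the second, equally hard input is missing from your proposal, and that is where the proof really lives.

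You write that one should ``choose a sufficiently general curve $B\subset Z$ and a finite \'etale cover $B'\to B$ such that the monodromy of $f$ over $B$ becomes trivial.'' This is not possible in general: the monodromy representation $\pi_1(B)\to\Aut(F)$ need not have finite image, so no finite cover can trivialize it. (Mumford's example, recalled in the paper after Theorem~\ref*{t.AD}, has a $\PP^1$-bundle over a curve of genus $\geq 2$ with Zariski-dense monodromy in $\textup{PGL}_2$; there the relative anti-canonical is strictly nef and there is no horizontal curve on which it is numerically trivial.) Your parenthetical fallback---``replacing the constant section by a positive-dimensional subvariety \dots\ produced from the identity component of the Zariski closure of the monodromy group acting on $F$''---does not work either: a connected linear algebraic group acting on $F$ has no fixed point in general, and the restriction of $-(K_{X'/Z}+\Delta_{X'})$ to an orbit closure need not be numerically trivial. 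The reason this fails in Mumford's example but must succeed in your situation is precisely the fact that the base has $K_Z\equiv 0$, and this numerical triviality is not used at all in your monodromy step.

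The paper turns $K_Y\equiv 0$ into the needed statement via a nontrivial chain of results: $K_Y\equiv 0$ implies $Y$ is special in the sense of Campana (Lemma~\ref*{lem-kappa-special}), hence $\pi_1(Y)$ is a special K\"ahler group (Lemma~\ref*{lem-special-klt}); by Campana--Claudon--Eyssidieux \cite{CCE15} every linear quotient of a special K\"ahler group is virtually abelian (Proposition~\ref*{prop-special-Kahler-abe}); in particular the image of the monodromy representation $\rho:\pi_1(Y)\to\GL(V)$ (on $V=\Coh^0(F,A_F)$ for a $\pi_1(Y)$-linearized ample $A_F$) is virtually solvable, so after a finite \'etale cover its Zariski closure is connected solvable, and Borel's fixed-point theorem produces a fixed point $z\in F\subset\PP(V)$ (Propositions~\ref*{prop-sol-fixed} and \ref*{prop_fund_pp}). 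This fixed point is then converted, via the flat-bundle bookkeeping in Lemmas~\ref*{l.fixedpointflatsection} and \ref*{lem_lcf-section}, into a section $\sigma:Y\to X$ with $\sigma^*(K_{X/Y}+\Delta)\equiv 0$ (Proposition~\ref*{p.solvableflatsection}); the contradiction with strict nefness then follows as you describe. Your write-up asserts the existence of this section without supplying any of the group-theoretic input that makes it true, so the proof does not go through as stated.
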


\noindent Before  giving  an affirmative answer of {\hyperref[main-conj-singular]{Conjecture \ref*{main-conj-singular}}}, we recall that the augmented irregularity $q^\circ(X)$ of a normal projective variety $X$ is defined as the maximum of the irregularities $q(\widetilde{X}):=\dimcoh^1(\widetilde{X},\scrO_{\widetilde{X}})$, where $\widetilde{X}\to X$ runs over the (finite) quasi-\'etale covers of $X$ (cf.~\cite[Definition 2.6]{NZ10}). The first main result of this paper is:
%Further, we shall give a positive answer to  {\hyperref[main-conj-singular-ample]{Question \ref*{main-conj-singular-ample}}} in dimension three.

\begin{mainthm}
\label{mainthm-RC}
Let $X$ be a normal projective variety. Suppose that there is an effective $\QQ$-divisor $\Delta$ on $X$ such that the pair $(X,\Delta)$ is klt and that $-(K_X+\Delta)$ is strictly nef. Then $X$ is rationally connected.
In particular, the augmented irregularity $q^\circ(X)=0$.
\end{mainthm}

In the following, we  study the relative case of the above setting by considering a fibration $f:X\rightarrow Y$  between projective manifolds.  
 Miyaoka proved in \cite[Theorem 2]{Miy93} that, the relative anti-canonical divisor $-K_{X/Y}$ is never ample if $f$ is  smooth and $Y$ is not a single point. 
In the past few decades, many generalizations have been established  by dropping the smoothness of $X$ and $f$ (e.g.  \cite{Zhang96}, \cite{ADK08}, \cite{AD13}). A remarkable one 
 is the following result in \cite[Theorem 5.1]{AD13} established by  Araujo and Druel :

\begin{thm}[Araujo-Druel]\label{t.AD}
Let $X$ be a normal projective variety and let $f:X\rightarrow C$ be a fibration onto a smooth projective curve $C$.
Assume that there exists an effective $\mathbb{Q}$-Weil divisor $\Delta$ on $X$ such that $K_{X}+\Delta$ is $\mathbb{Q}$-Cartier.
	\begin{enumerate}
		\item[\rm(1)] If $(X,\Delta)$ is log canonical over the generic point of $C$, then $-(K_{X/C}+\Delta)$ is not ample.
		
		\item[\rm(2)] If $(X,\Delta)$ is klt over the generic point of $C$, then $-(K_{X/C}+\Delta)$ is not nef and big.
	\end{enumerate}
\end{thm}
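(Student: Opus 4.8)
The plan is to argue by contradiction and to reduce the whole question to a canonical bundle formula on the base curve $C$. First I would normalise the situation: assuming $-(K_{X/C}+\Delta)$ is nef and big (case (2)), Kodaira's lemma writes it as (ample)~$+$~(effective), and a routine perturbation of the boundary — using that $-(K_{X/C}+\Delta)$ restricts to a big divisor on the general fibre, which follows from the Hodge index theorem after cutting down by general ample divisors — reduces us to the situation of case (1) in which $A:=-(K_{X/C}+\Delta)$ is \emph{ample} and $(X,\Delta)$ is klt over the generic point of $C$; the genuinely log canonical case (1) is the analogue, and I will come back to the extra care it needs. Since $A$ is $f$-ample, the general fibre $(F,\Delta_F)$ is a klt log Fano pair, hence rationally connected by \cite{Zhang06} — this is not used in the main line below but is the geometric content. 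Now I would pick a general $\Gamma\in|A|_{\QQ}$; as $A$ is ample, $\Gamma$ is irreducible, meets every fibre, $(X,\Delta+\Gamma)$ is still klt over the generic point, and
\[
K_{X/C}+\Delta+\Gamma\sim_{\QQ}0,\qquad\text{equivalently}\qquad K_X+\Delta+\Gamma\sim_{\QQ}f^*K_C .
\]

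Next, after replacing $(X,\Delta+\Gamma)$ by a crepant log canonical model (harmless here because the pair is log canonical over the generic point of $C$), I would apply the canonical bundle formula for lc-trivial fibrations (Kawamata, Ambro, Fujino--Gongyo):
\[
K_X+\Delta+\Gamma\sim_{\QQ}f^*\bigl(K_C+B_C+M_C\bigr),
\]
where $B_C\geqslant 0$ is the discriminant divisor and the moduli part $M_C$ is nef. Comparing this with the previous displayed equivalence and using that $f^*\colon\mathrm{Pic}(C)_{\QQ}\to\mathrm{Pic}(X)_{\QQ}$ is injective, I get $B_C+M_C\sim_{\QQ}0$; since $B_C$ is effective and $M_C$ is a nef divisor of degree $0$ on the curve $C$, this forces $B_C=0$ and $M_C\equiv 0$.

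The final step is to turn $B_C=0$ and $M_C\equiv 0$ into a contradiction, and this is where the real difficulty lies. Vanishing of the discriminant and of the moduli part should say that the fibration $(X,\Delta+\Gamma)\to C$ has no bad fibres and no variation, hence becomes a product $(F,\Delta_F+\Gamma_F)\times C'$ after a finite base change $C'\to C$ and a birational modification; then $-(K_{X/C}+\Delta)$ pulls back to $\mathrm{pr}_F^*\bigl(-(K_F+\Delta_F)\bigr)$, a class pulled back from the first factor, whose top self-intersection number on $F\times C'$ vanishes, so it is not big — contradicting the fact that the pullback of the ample class $A$ is nef and big. \textbf{The main obstacle is precisely this deduction of (birational) isotriviality from the numerical vanishing $B_C=0,\ M_C\equiv 0$}: the moduli part is only a coarse invariant, so one must combine $M_C\equiv 0$ with the semistable reduction of $f$ over the curve and keep scrupulous track of the base changes and modifications so that the conclusion ``not big'' descends back to $X$; and in case (1), where the fibres are merely log canonical, one has to work with lc-trivial fibrations in Ambro's sense, whose discriminant need not be effective, so a further argument (for instance, that the log canonical centres of $(X,\Delta)$ meet the fibres properly, or an MMP reduction to the klt case) is required to still conclude $B_C=0$. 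I would also keep in reserve a more geometric route avoiding the canonical bundle formula: since the fibres are rationally connected, $f$ has a section $\Sigma$ by Graber--Harris--Starr, and $N_{\Sigma/X}\cong T_{X/C}|_{\Sigma}$ gives $(-K_{X/C})\cdot\Sigma=\deg N_{\Sigma/X}$; one then tries to contradict ampleness using Miyaoka-type generic semipositivity of the relative cotangent sheaf, or the Viehweg--Fujino weak positivity of direct images. The catch there is that the failure of ampleness is asymptotic rather than visible on any single curve — already for $X=\mathbb{P}_C(E)\to C$ with $E$ a stable bundle, $-(K_{X/C})$ is positive on every curve yet not ample — so one has to argue with the movable cone, and the singularities of $X$ and $\Delta$ again add technical overhead.
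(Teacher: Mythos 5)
The paper gives no proof of this result; it is quoted verbatim from \cite[Theorem 5.1]{AD13}, so there is no internal proof to compare against and your attempt has to be judged on its own. The reduction to an lc-trivial fibration $K_{X/C}+\Delta+\Gamma\sim_{\QQ}0$ by adding a general $\QQ$-member $\Gamma$ of $|{-}(K_{X/C}+\Delta)|_{\QQ}$, and the subsequent application of the canonical bundle formula, are legitimate and do yield $B_C+M_C\sim_{\QQ}0$, hence $B_C=0$ and $M_C\equiv 0$. The genuine gap is exactly the one you flag yourself: extracting a contradiction from that numerical vanishing. The step ``$B_C=0$ and $M_C\equiv 0$ imply the fibration is birationally isotrivial after a base change'' rests on the semi-ampleness of the moduli $b$-divisor over a curve base \emph{and} on Ambro's theorem identifying vanishing of the moduli $b$-divisor with birational isotriviality; these are substantial theorems, heavier than the statement you are trying to prove, and you invoke them only by name. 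Moreover the finishing move --- descending ``not big'' from a crepant trivialization $(F\times C',\mathrm{pr}_F^*\Delta''_F)$ back through a finite base change and a birational modification to $X$, and comparing strict transforms with pullbacks --- involves genuine bookkeeping that is not ``routine'' and is not carried out. As written the argument does not close.

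Two smaller points. Your worry that, in the genuinely lc case (1), ``the discriminant need not be effective'' is unfounded: effectivity of $B_C$ for an lc-trivial fibration whose total pair is log canonical over the generic point of the base is exactly what Ambro and Fujino--Gongyo prove, so that is not where the difficulty lies (the delicate point is rather that the pair is only assumed lc over the generic point, not globally, so one first normalizes by a dlt modification before applying the canonical bundle formula). And the bigness of $A|_F$ on a general fibre, needed to reduce case (2) to an ample case, does not require the Hodge index theorem: Kodaira's lemma writes $A\sim_{\QQ}H+E$ with $H$ ample and $E$ effective, and restricting to a general fibre $F$ not contained in the support of $E$ gives $A|_F\sim_{\QQ}H|_F+E|_F$, ample plus effective, hence big.
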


\noindent One may wonder whether $-K_{X/Y}$ can be strictly nef in the relative setting.  Indeed, there is  an example constructed by  Mumford.  Let $C$ be a smooth  curve of genus at least two. 
There exists a vector bundle $E$ of rank two over $C$ such that the first Chern class $c_1(E)=0$ and the tautological bundle $\scrO_{\PP E}(1)$ of the projectivization $X\coloneqq \PP E$ is strictly nef. It is easy to see that $-K_{X/C}$ is strictly nef.
The second main result in this paper is to investigate the geometric structures for such fibrations.
More precisely, we obtain:

\begin{mainthm}\label{t.mainthm}
	Let $f:X\rightarrow Y$ be a fibration from a normal projective variety $X$ onto a smooth projective variety $Y$. Suppose that there exists an effective $\QQ$-Weil divisor $\Delta$ on $X$ such that $(X,\Delta)$ is a klt pair and $-(K_{X/Y}+\Delta)$ is strictly nef. 
	Then $f$ is a locally constant fibration with rationally connected fibres and $Y$ is a canonically polarized hyperbolic projective manifold.
\end{mainthm}

Recall that, a projective manifold $Y$ is said to be \textit{canonically polarized} if its canonical divisor $K_Y$ is ample and to be \textit{hyperbolic} if any holomorphic map $\mathbb{C}\to Y$ from the complex line $\mathbb{C}$ is constant. 
As an application of {\hyperref[t.mainthm]{Theorem \ref*{t.mainthm}}} and techniques developed in \cite{Ou21} (see also \cite[Theorem 1.1]{Druel17}), we obtain characterizations of the geometric structures of regular foliations on projective manifolds with strictly nef anti-canonical divisor. 

\begin{maincor}
	\label{c.algebraicallyintegrable}
	Let $\scrF$ be a foliation on a projective manifold $X$. Assume that either $\scrF$ is regular, or $\scrF$ has a compact leaf. 
	
	\begin{enumerate}
		\item[\rm(1)] If $-K_{\scrF}$ is strictly nef, then there exists a locally constant fibration $f:X\rightarrow Y$ with rationally connected fibres over a canonically polarized hyperbolic projective manifold $Y$ such that $\scrF$ is induced by $f$. In particular, $\scrF$ is algebraically integrable.
		
		\item[\rm(2)] If the fundamental group $\pi_1(X)$ is virtually solvable (i.e., a finite index subgroup of $\pi_1(X)$ is solvable), then $-K_{\scrF}$ cannot be strictly nef.
	\end{enumerate}	
\end{maincor}

%{\hyperref[mainthm-RC]{Theorem \ref*{mainthm-RC}}} $\sim$ {\hyperref[mainthm-ample-3klt]{Theorem \ref*{mainthm-ample-3klt}}} below are our main results of this paper.

Finally, we show that {\hyperref[main-conj-singular-ample]{Conjecture \ref*{main-conj-singular-ample}}} holds when $\dim X=3$, which extends results in  \cite[Theorem 3.9]{Ser95} and \cite[Main Theorem]{Ueh00} (see also \cite[Conjecure 1.2]{HL20}).

\begin{mainthm}
\label{mainthm-ample-3klt}
Let $X$ be a normal projective threefold. If there is an effective $\QQ$-divisor $\Delta$ on $X$ such that the pair $(X,\Delta)$ is klt and that $-(K_X+\Delta)$ is strictly nef, then $-(K_X+\Delta)$ is ample.
\end{mainthm}

\noindent 
To  end up this section, we propose a singular version of Serrano's conjecture (\cite{Ser95}), which has an affirmative answer when $\dim X\leqslant 2$ (cf.~\cite[Corollary 1.8]{HL20}). 
\begin{ques}\label{main-conj-singular-arbitrary}
Let $(X,\Delta)$ be a projective klt pair, and $L$ be a strictly nef $\mathbb{Q}$-divisor on $X$.
Is $K_X+\Delta+tL$  ample for sufficiently large $t\gg 1$?
\end{ques}

%Indeed,
%\begin{prop}[{\cite[Corollary 1.8]{HL20}; see {\hyperref[prop_Q-Goren_surface]{Proposition \ref*{prop_Q-Goren_surface}}}   for a further extension}]\label{main_thm_surface}
%Let $(X,\Delta)$ be a projective klt surface.
%Suppose $L_X$ is a strictly nef Cartier divisor on $X$.
%Then $K_X+\Delta+tL_X$ is ample for every real number $t>3$.
%\end{prop}

\paragraph{\textbf{Acknowledgement}}
J. Liu is supported by the NSFC grants No. 11688101 and No. 12001521. 
J. Wang is supported by the National Natural Science Foundation of China project `Geometry, analysis, and computation on manifolds'.
X. Yang is supported by the NSFC grant 12171262.
G. Zhong is supported by a President's Scholarship of NUS.

\vskip 2\baselineskip

\section{Preliminary results}
\label{sec_pre}
Throughout this paper, we refer to \cite[Chapter 2]{KM98} for different kinds of singularities.
By a \textit{projective klt (resp. canonical, dlt) pair} $(X,\Delta)$, we mean that $X$ is a normal projective variety, $\Delta\geqslant 0$ is an effective $\mathbb{Q}$-divisor such that $K_X+\Delta$ is $\mathbb{Q}$-Cartier, and  $(X,\Delta)$ has only klt (resp. canonical, dlt) singularities.

A $\mathbb{Q}$-Cartier divisor $L$ on a projective variety is said to be \textit{strictly nef} if $L\cdot C>0$ for every (complete) curve $C$ on $X$.
We also recall the definition of almost strictly nef divisors for the convenience of the proofs in the later sections.
\begin{defn}[{cf.~\cite[Definition 1.1]{CCP08}}]\label{defn-almost-sn}
A $\mathbb{Q}$-Cartier divisor $L$ on a normal projective variety $X$ is called  \textit{almost strictly nef}, if there exist a birational morphism $\pi:X\to Y$ to a  normal projective variety $Y$ and a strictly nef $\mathbb{Q}$-divisor $L_Y$ on $Y$ such that $L=\pi^*L_Y$.  
\end{defn}
One of the motivation to introduce almost strictly nef divisors is to study the  non-uniruled case of {\hyperref[main-conj-singular-arbitrary]{Question \ref*{main-conj-singular-arbitrary}}}  via the descending of Iitaka fibrations, in which case, the pullback of a strictly nef divisor to a higher model which resolves the indeterminacy is almost strictly nef  (cf.~\cite[Theorem 2.6]{CCP08}).

Now, we show  that, under the condition of  {\hyperref[main-conj-singular]{Question \ref*{main-conj-singular}}},  such $X$ is uniruled.
This permits us to take a first glance on the geometric property of a  projective klt pair with (almost) strictly nef anti-log canonical divisor. 
%We also note that, {\hyperref[main-conj-singular]{Question \ref*{main-conj-singular}}} has an affirmative answer when $X$ is smooth with the boundary $\Delta=0$ (cf.~ \cite[Theorem 1.3]{LOY19}).

\begin{prop}\label{prop_strict_uniruled}
Let $(X,\Delta)$ be a projective pair such that $-(K_X+\Delta)$ is almost strictly nef.
Then $X$ is uniruled, i.e., $X$ is covered by rational curves.	
\end{prop}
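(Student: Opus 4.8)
The plan is to argue by contradiction: assuming $X$ is not uniruled, I will show that $K_X+\Delta$ is pseudoeffective, which is incompatible with $-(K_X+\Delta)$ being almost strictly nef. We may assume $n:=\dim X\geqslant 1$. First I would unwind the hypothesis. By Definition~\ref{defn-almost-sn} there are a birational morphism $\pi\colon X\to Y$ onto a normal projective variety $Y$ and a strictly nef $\mathbb{Q}$-divisor $L_Y$ on $Y$ with $-(K_X+\Delta)=\pi^*L_Y$; in particular $-(K_X+\Delta)$ is nef. Fix an ample divisor $H$ on $X$ and a general complete-intersection curve $C=H_1\cap\cdots\cap H_{n-1}$ with $H_i\in|mH|$ and $m\gg 0$. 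Being general, $C$ is not contained in $\mathrm{Exc}(\pi)$, so $\pi$ does not contract it and $\pi_*C\neq 0$; hence
\[
-(K_X+\Delta)\cdot H^{n-1}=\tfrac{1}{m^{n-1}}\,\pi^*L_Y\cdot C=\tfrac{1}{m^{n-1}}\,L_Y\cdot\pi_*C>0
\]
by strict nefness of $L_Y$. Therefore $(K_X+\Delta)\cdot H^{n-1}<0$, and so $K_X+\Delta$ is \emph{not} pseudoeffective, since any pseudoeffective $\mathbb{Q}$-Cartier class meets $H^{n-1}$ non-negatively.

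Next, suppose $X$ is not uniruled and choose a resolution $p\colon\widetilde X\to X$. Since uniruledness is a birational invariant, $\widetilde X$ is not uniruled, so by the theorem of Boucksom--Demailly--P\u{a}un--Peternell the class $K_{\widetilde X}$ is pseudoeffective. I would then write $K_{\widetilde X}=p^*(K_X+\Delta)+\sum_i a_iE_i-p_*^{-1}\Delta$, the sum running over the $p$-exceptional prime divisors $E_i$ with discrepancies $a_i$, and set $D^+:=p_*^{-1}\Delta+\sum_{a_i<0}(-a_i)E_i$ and $D^-:=\sum_{a_i>0}a_iE_i$, which are effective with $D^-$ moreover $p$-exceptional, so that
\[
p^*(K_X+\Delta)+D^-=K_{\widetilde X}+D^+ .
\]
The right-hand side is pseudoeffective, being the sum of the pseudoeffective class $K_{\widetilde X}$ and an effective divisor. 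It then remains to descend this to $X$, that is, to see that $K_X+\Delta$ itself is pseudoeffective. For this I would use that pseudoeffectivity of a $\mathbb{Q}$-Cartier class is detected by non-negativity against movable curve classes, and that every movable curve class $\gamma$ on $X$ is represented by curves sweeping out a family whose general member avoids the codimension-$\geqslant 2$ subset $p(\mathrm{Exc}(p))$; such a member has strict transform $\widetilde\gamma$ on $\widetilde X$ disjoint from $\mathrm{Exc}(p)$, whence $\widetilde\gamma\cdot D^-=0$, and the projection formula gives $(K_X+\Delta)\cdot\gamma=(p^*(K_X+\Delta)+D^-)\cdot\widetilde\gamma\geqslant 0$. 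Thus $K_X+\Delta$ is pseudoeffective, contradicting the first paragraph, and $X$ must be uniruled.

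The one deep input is the Boucksom--Demailly--P\u{a}un--Peternell theorem that a non-uniruled smooth projective variety has pseudoeffective canonical divisor. The delicate point in the execution is the last implication --- descending pseudoeffectivity from $K_{\widetilde X}$ to $K_X+\Delta$ --- because $(X,\Delta)$ is not assumed log canonical, so $\mathrm{Exc}(p)$ may carry positive discrepancies (that is, $D^-$ may be nonzero) and ``pseudoeffective minus effective $p$-exceptional'' need not be pseudoeffective; this is exactly what the movable-curve argument is for, the key being that movable classes on $X$ can be realized in families avoiding $p(\mathrm{Exc}(p))$. Everything else --- unwinding the definition, the positivity of $-(K_X+\Delta)$ along a complete-intersection curve, and the closing numerical contradiction --- is routine.
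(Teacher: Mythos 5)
Your overall strategy — playing off pseudoeffectivity (from BDPP) against a positivity computation along a complete-intersection curve — is in the same spirit as the paper's proof, and steps one through five (unwinding almost strict nefness, the complete-intersection computation showing $(K_X+\Delta)\cdot H^{n-1}<0$, and the discrepancy decomposition $p^*(K_X+\Delta)+D^-=K_{\widetilde X}+D^+$) are all correct. The genuine problem is exactly the one you flag yourself: the descent of pseudoeffectivity from $\widetilde X$ to $X$. Your proposed fix is the claim that every movable curve class $\gamma$ on $X$ is represented by a family whose general member avoids the codimension-$\geqslant 2$ set $p(\mathrm{Exc}(p))$. That is not a standard fact and you offer no argument. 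For the BDPP-type generators $\gamma=\mu_*(A_1\cdots A_{n-1})$ (with $\mu\colon X'\to X$ birational and $A_i$ ample), the general complete-intersection curve in $X'$ necessarily meets the divisor $\mu^{-1}\bigl(p(\mathrm{Exc}(p))\bigr)$, so this natural representative of $\gamma$ does \emph{not} avoid $p(\mathrm{Exc}(p))$, and it is unclear that a different representative in the same class must exist. You also use without comment that the strict transform $\widetilde\gamma$ lies in $\overline{\mathrm{ME}}(\widetilde X)$, which is again plausible but not immediate.

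The paper avoids all of this. From $K_{\widetilde X}$ pseudoeffective, it pushes forward to get $K_X$ pseudoeffective \emph{as a Weil class}; adding the nef (hence pseudoeffective) class $-(K_X+\Delta)$ gives $-\Delta$ pseudoeffective as a Weil class, and since $\Delta\geqslant 0$ this forces $\Delta=0$. Now $K_X$ is $\QQ$-Cartier and pseudoeffective, $-K_X$ is almost strictly nef, and your first-step computation gives the contradiction. Forcing $\Delta=0$ is the one idea you missed, and it removes the discrepancy bookkeeping and the Weil/Cartier tension entirely. If you prefer to keep your Cartier-side route, the clean descent is by sections rather than movable curves: since $D^-$ is effective and $p$-exceptional, $p_*\scrO_{\widetilde X}(mD^-)=\scrO_X$, hence for any ample $A$ on $X$, any $\epsilon>0$, and $m$ sufficiently divisible,
\[
h^0\bigl(X,\, m(K_X+\Delta+\epsilon A)\bigr)=h^0\bigl(\widetilde X,\, m(K_{\widetilde X}+D^+ +\epsilon\, p^*A)\bigr),
\]
and the right-hand side grows like $m^{\dim X}$ because $K_{\widetilde X}+D^+$ is pseudoeffective and $\epsilon\, p^*A$ is nef and big; thus $K_X+\Delta+\epsilon A$ is big for every $\epsilon>0$ and $K_X+\Delta$ is pseudoeffective. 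Substituting this for your movable-curve paragraph would close the gap.
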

\begin{proof}
Suppose the contrary that $X$ is not uniruled.
Then $K_X$ is pseudo-effective as a Weil divisor (cf. \cite{BDPP13}) by considering a resolution of $X$.
In other words, $K_X$ lies in the closure of the cone generated by effective Weil divisors on $X$ (cf.~\cite[Definition 2.2]{MZ18}).   
%\cite[Chapter II, Definition 5.5]{Nak04} and
So $-\Delta=-(K_X+\Delta)+K_X$ is also pseudo-effective (as a 
Weil divisor) and thus $\Delta=0$.
This in turn implies the almost strict nefness of $-K_X$, a contradiction to the pseudo-effectivity of $K_X$. 
\end{proof}

In the remaining part of this section, we review the definition and basic properties of locally constant fibrations and flat vector bundles for the convenience of readers.

Let $\phi:X\rightarrow Y$ be a proper morphism between normal varieties. We say that $\phi$ is a \emph{fibration} if $\phi_*\scrO_X=\scrO_Y$. 
\begin{defn}[{\cite[Definition 2.6]{MW21}; cf.~\cite[Definition 1.6]{Wang20}}]\label{defn-locally-constant}
Let $\phi:X\to Y$ be a surjective morphism with connected fibres between analytic varieties, and let $\Delta$ be a Weil $\mathbb{Q}$-divisor on $X$.
\begin{enumerate}
	\item[(1)] The morphism $\phi:X\to Y$ is said to be a \textit{locally constant fibration with respect to the pair} $(X,\Delta)$ if it satisfies the following conditions:
	\begin{itemize}
	\item The morphism $\phi:X\to Y$  is a (locally trivial) analytic fibre bundle with the fibre $F$.
	\item Every component $\Delta_i$ of $\Delta$ is horizontal (i.e., $\phi(\Delta_i)=Y$).
	\item There exist a representation
	$$\rho:\pi_1(Y)\to \textup{Aut}(F)$$
	of the fundamental group $\pi_1(Y)$ of $Y$ to the automorphism group $\Aut(F)$ of $F$ and a Weil $\mathbb{Q}$-divisor $\Delta_F$ on $F$ which is invariant under the action of $\pi_1(Y)$, so that $(X,\Delta)$ is isomorphic to the quotient of $(Y^{\text{univ}}\times F, \textup{pr}_2^*\Delta_F)$ over $Y$ by the action of $\pi_1(Y)$ on $Y^{\textup{univ}}\times F$.
	Here $Y^{\textup{univ}}$ is the universal cover of $Y$, and the action of $\gamma\in\pi_1(Y)$ is defined to be
	$$\gamma\cdot(y,z):=(\gamma\cdot y,\rho(\gamma)(z))$$
	for any $(y,z)\in Y^{\textup{univ}}\times F$.
	\end{itemize}
 \item[(2)] The morphism $\phi:X\to Y$ is simply said to be \textit{locally constant} if it satisfies the above conditions for $\Delta=0$.
 \item[(3)] For the convenience of the notation, we denote by $\text{Aut}(F,\Delta_F)$  the group of  automorphisms of $F$ which leave  $\Delta_F$ invariant, counted with the multiplicities.
\end{enumerate}
\end{defn}

One of the most important examples of locally constant families  are   \emph{flat vector bundles}, which plays an important role in this paper.

\begin{defn}
Let $E\rightarrow Y$ be a holomorphic vector bundle of rank $r$ over a normal variety. Then we say that $E$ is \textit{flat} if $E\rightarrow Y$ is a locally constant family corresponding to a representation $\rho:\pi_1(Y)\rightarrow \text{GL}(r,\mathbb{C})$.
\end{defn}

Let $E\rightarrow Y$ be a holomorphic vector bundle over a normal projective variety.
Recall that $E$ is called \emph{nef} if the tautological line bundle $\scrO_{\PP E}(1)$ over $\PP E$ is nef, and $E$ is called \emph{numerically flat} if both $E$ and its dual bundle $E^*$ are nef. According to \cite[Theorem 1.18 and Corollay 1.19]{DPS94} and \cite[Section 3]{Sim92},  a numerically flat vector bundle $E\rightarrow Y$ over a normal projective variety is a flat vector bundle.

\begin{lemme}[{\cite[Lemma 4.4]{LOY19}}]
\label{lemma_fixed-pt-subbundle}
Let $V$ be a finite dimensional representation of $G:=\pi_1(Y)$ with $E$ being the flat vector bundle on $Y$ induced by this representation. 
Then there is a one-to-one correspondence between the set of $G$-fixed points $z\in\PP V$ and the set of codimension one flat subbundles of $E$.
\end{lemme}

This is nothing but a tautology fact (functorial definition of projective spaces); see \cite[Lemma 4.4]{LOY19} for the case when $Y$ is a smooth projective variety. 
Indeed, the projectivity and smoothness of  $Y$ are not needed in the proof. 
For readers' convenience, we include a detailed proof.

\begin{proof}[Proof of {\hyperref[lemma_fixed-pt-subbundle]{Lemma \ref*{lemma_fixed-pt-subbundle}}}]
Clearly, there is a one-to-one correspondence between the set of $G$-fixed-points $z\in\PP V$ (which corresponds to a hyperplane $V_z\subset V$) and the set of quotient representations $V\to V/V_z$.
By the bijection between the isomorphism classes of  local systems on $Y$ and isomorphism classes of  representations of the fundamental group $\pi_1(Y)$, there is a one-to-one correspondence between  the set of $G$-fixed-points $z\in\PP V$ and line bundle quotients $E\to F$.
Since the kernel of a surjective morphism between locally free sheaves is locally free, our lemma follows.
\end{proof}

\begin{lemme}[{\cite[Lemma 4.5]{LOY19}}]
	\label{l.fixedpointflatsection}
	Let $Y$ be a normal projective variety and $\pi:Y\univ\rightarrow Y$  the universal cover with Galois group $G=\pi_1(Y)$. 
	Let $\rho:G\rightarrow \text{GL}(V,\CC)$ be a linear representation of $G$ and denote by $E$ the corresponding flat vector bundle over $Y$. 
	If $z\in\PP(V)$ is a $G$-fixed-point, then it induces a section $\sigma:Y\rightarrow \PP E$   such that $\sigma^*\scrO_{\PP E}(1)$ is numerically trivial. 
\end{lemme}

\begin{proof}
Let $F$ be  the co-rank one flat  subbundle of 
$E$ corresponding to $z$ (cf.~{\hyperref[lemma_fixed-pt-subbundle]{Lemma \ref*{lemma_fixed-pt-subbundle}}}).  
Let $\sigma:Y\rightarrow \PP E$ be the section defined by the following short exact sequence
	\[
	0\rightarrow F\rightarrow E\rightarrow E/F\rightarrow 0.
	\]
	Then we get $\sigma^*\scrO_{\PP E}(1)\cong E/F$. 
	On the other hand, as both $F$ and $E$ are numerically flat, it follows that $E/F$ and hence $\sigma^*\scrO_{\PP E}(1)$ are also numerically flat.
In particular, the line bundle $\sigma^*\scrO_{\PP E}(1)$ is numerically trivial.
\end{proof}

\begin{rmq}
	In the above lemma, the image $\sigma(Y)$ is just the image of $Y\univ\times \{z\}$ under the natural morphism $Y\univ\times \PP(V)\rightarrow \PP E$. 
	Here, we identify $\PP E$ with  the quotient of $Y\univ\times \PP V$ by the induced diagonal action of $G$.
\end{rmq}

Let $f:X\rightarrow Y$ be a locally constant fibration with respect to the pair $(X,\Delta)$, and let $\mu:Y'\rightarrow Y$ be a morphism from a normal variety $Y'$. 
Then there exists a locally constant fibration $f':X'\rightarrow Y'$ with respect to a pair $(X',\Delta')$ induced by $f$. Indeed, we consider the natural composition
\[
\pi_1(Y')\xrightarrow{\mu_*} \pi_1(Y) \xrightarrow{\rho} \Aut(F,\Delta_F), 
\]
where $\rho$ is the representation associated to $f$. Then   $X' = X\times_{Y} Y'$.

With this base change kept in mind, we prove the following lemma.
\begin{lemme}
	\label{l.basechange}
	Let $f:X\rightarrow Y$ be a locally constant fibration between normal projective varieties with respect to a pair $(X,\Delta)$. Assume that $K_X+\Delta$ is $\QQ$-Cartier and $Y$ is $\QQ$-Gorenstein. Let $\mu:Y'\rightarrow Y$ be a morphism from a normal projective variety $Y'$ with $\QQ$-Gorenstein singularities. 
	Denote by $f':X'\rightarrow Y'$ the induced locally constant fibration by $f$ and let $g:X'\rightarrow X$ be the natural   morphism. 
	Then we have
	\[
	g^*(K_{X/Y}+\Delta) \sim_{\QQ} K_{X'/Y'}+\Delta'.
	\]
\end{lemme}

\begin{proof}
	Let $F$ be a general fibre of $f$ and let $\Delta_F$ be the $\QQ$-Weil divisor on $F$ as  in {\hyperref[defn-locally-constant]{Definition  \ref*{defn-locally-constant}}}. 
	Let $Y\univ$ and $(Y')\univ$ be the universal coverings of $Y$ and $Y'$, respectively. Then the morphism $\mu:Y'\rightarrow Y$ can be lifted to a morphism $\widetilde{\mu}:(Y')\univ\rightarrow Y\univ$ satisfying the following commutative diagram
	\[
	\begin{tikzcd}[column sep=large, row sep=large]
		(Y')\univ\times F \arrow[r,"\widetilde{g}"] \arrow[d,"\pi'" left]  
		    & Y\univ \times F  \arrow[d,"\pi"]  \\
		X' \arrow[r,"g" below]
		    & X
	\end{tikzcd}
	\]
	where $\widetilde{g}:=(\widetilde{\mu},\textup{id})$. Since $K_X+\Delta$ is $\QQ$-Cartier, the $\QQ$-Weil divisor $K_F+\Delta_F$ is also $\QQ$-Cartier. Moreover, since $\pi'$ and $\pi$ are both \'etale, we get
	\begin{center}
		$(\pi')^*(K_{X'/Y'}+\Delta')=(p_2')^*(K_F+\Delta_F)$ \quad and \quad $\pi^*(K_{X/Y}+\Delta)=p_2^*(K_F+\Delta_F)$,
	\end{center}
    where $p_2'$ and $p_2$ are the projections $(Y')\univ\times F\rightarrow F$ and $Y\univ\times F\rightarrow F$, respectively. 
    In particular, we obtain
    \[
    (g\circ \pi')^*(K_{X/Y}+\Delta) \sim (\pi\circ \widetilde{g})^*(K_{X/Y}+\Delta) \sim (\pi')^*(K_{X'/Y'}+\Delta').
    \]
    Note that, by our assumption, the line bundle $(p'_2)^*\scrO_{F}(mK_F+m\Delta_F)$ is equivariant with respect to the induced action of $\pi_1(Y')$ on $(Y')\univ\times F$ for 
     for $m$ sufficiently divisible. Thus we have $g^*(K_{X/Y}+\Delta)\sim K_{X'/Y'}+\Delta'$. 
\end{proof}

\vskip 2\baselineskip

\section{Periodic points of a special K\"ahler group}\label{sec-period}
In this section, we study the periodic points of an automorphism group, as a linear quotient of some special K\"ahler group, with 
the main results {\hyperref[prop-sol-fixed]{Proposition    \ref*{prop-sol-fixed}}} and {\hyperref[prop_fund_pp]{Proposition     \ref*{prop_fund_pp}}}.
As an application, we apply {\hyperref[prop_fund_pp]{Proposition     \ref*{prop_fund_pp}}}  to the locally constant MRC fibration $f:X\to Y$ in \cite[Theorem 1.2]{MW21}, which plays a significant role to find a section of such $f$  (cf.~{\hyperref[cor-lc-periodic]{Corollary \ref*{cor-lc-periodic}}} and  {\hyperref[lemma_fixed-pt-subbundle]{Lemma   \ref*{lemma_fixed-pt-subbundle}}}). 
We refer readers to \cite[Section 4]{LOY19} which considered the Albanese map onto the complex torus; however, the fundamental group of our $Y$ (as the object of the MRC fibration) here is neither abelian nor solvable.

To begin with this section,  we briefly recall some basic facts about the Shafarevich map, which will be used in the proof of this and also later sections. 
\begin{defn}[{\cite[Definition 3.5]{Kollar95}}]\label{defn-shafarevich}
Let $Y$ be a normal variety and $H\lhd \pi_1(Y)$ a normal subgroup of the fundamental group of $Y$.
A normal variety $\textup{Sh}^H(Y)$ and a rational map $\textup{sh}_Y^H:Y\dashrightarrow\textup{Sh}^H(Y)$ are called the \textit{$H$-Shafarevich variety} and the \textit{$H$-Shafarevich map} of $Y$ if
\begin{enumerate}
    \item[(1)] $\textup{sh}_Y^H$ has connected fibers, and
    \item[(2)] there are countably many closed subvarieties $D_i\subseteq Y$ $(D_i\neq Y)$ such that for every closed, irreducible subvariety $Z\subseteq Y$ with $Z\not\subseteq\cup D_i$, we have 
    $$\textup{sh}_Y^H(\widetilde{Z})=\textup{point}~~\textup{if and only if}~~\textup{im}[\pi_1(\widetilde{Z})\to\pi_1(X)]\textup{~has finite index in }H,$$
where $\widetilde{Z}$ is the normalization of $Z$.
\end{enumerate}
\end{defn}

The following proposition guarantees the existence of the Sharfarevich maps.
\begin{prop}[{cf.~\cite[Theorem 3.6]{Kollar95}}]\label{prop-kol-sha}
Let $X$ be a normal variety, and $H\lhd\pi_1(X)$ a normal subgroup. Then
\begin{enumerate}
\item[(1)]The $H$-Shafarevich map $\textup{sh}_X^H:X\dashrightarrow\textup{Sh}^H(X)$ exists.
\item[(2)]For every choice of $\textup{Sh}^H(X)$ (within its birational equivalence class) there are open subsets $X_\circ\subseteq X$ and $W_\circ\subseteq\textup{Sh}^H(X)$ with the following properties:
\begin{itemize}
    \item $\textup{sh}_X^H:X_\circ\to W_\circ$ is everywhere defined on $X_\circ$.
    \item Every fibre of $\textup{sh}_X^H|_{X_\circ}$ is closed in $X$.
    \item $\textup{sh}_X^H|_{X_\circ}$ is a topologically locally trivial fibration.
\end{itemize}
\item[(3)] If $X$ is proper, then $\textup{sh}_X^H:X_\circ\to W_\circ$ is proper and the very general points of $X$ is contained in $X_\circ$ for a suitable choice of $\textup{Sh}^H(X)$ and $X_\circ$.
\end{enumerate}
\end{prop}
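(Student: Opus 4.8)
The plan is to deduce the three statements from Kollár's theory of Shafarevich maps: (1)--(3) are essentially a reformulation of \cite[Theorem 3.6]{Kollar95} (together with the openness complements recorded there), so rather than reproving them from scratch I would recall the construction, since it will be used repeatedly in the sequel. The geometric picture to keep in mind is that, through a very general point $x\in X$, the fibre of $\textup{sh}_X^H$ is the maximal irreducible closed subvariety $Z$ through $x$ whose normalization $\widetilde Z$ satisfies that $\textup{im}[\pi_1(\widetilde Z)\to\pi_1(X)]$ has finite index in $H$; call such a $Z$ an \emph{$H$-small} subvariety. The content of {\hyperref[defn-shafarevich]{Definition \ref*{defn-shafarevich}}} is precisely that these maximal $H$-small subvarieties exist and glue into an algebraic family away from a countable union $\bigcup D_i$ of proper closed subsets.

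First I would recall the algebraicity input. Subvarieties of $X$ are parametrized by the countably many irreducible components of $\Chow(X)$ (after choosing a projective compactification and passing to a Hilbert scheme if $X$ is not proper); along the universal family over a fixed component, a monodromy argument shows that the property of being $H$-small is constant on a dense open subset, so the $H$-small subvarieties of $X$ form a countable union of families, and the members of these families lying outside the respective good open subsets sweep out the exceptional loci $D_i$. One then forms the quotient of $X$ by the equivalence relation generated by this countable union of families, and appeals to Kollár's general existence theorem for such quotients --- which, via Hilbert-scheme and Stein-factorization arguments, reduces to the case of a single sufficiently large family. This produces a normal variety $\textup{Sh}^H(X)$ together with a dominant rational map with connected fibres $\textup{sh}_X^H\colon X\dashrightarrow\textup{Sh}^H(X)$ whose general fibres are the maximal $H$-small subvarieties, so that the characterization in {\hyperref[defn-shafarevich]{Definition \ref*{defn-shafarevich}}}(2) holds by construction; this is (1). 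Parts (2) and (3) I would then read off directly from the construction: on the open locus $X_\circ\subseteq X$ where the parametrizing family is equidimensional, $\textup{sh}_X^H$ is a morphism with closed fibres and is topologically locally trivial, and when $X$ is proper the relevant family of subvarieties is proper, whence $\textup{sh}_X^H|_{X_\circ}$ is proper and, after choosing the birational model of $\textup{Sh}^H(X)$ suitably, $X_\circ$ contains the very general point of $X$.

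The hard part --- and the reason I would cite rather than reconstruct the argument --- is realizing $\textup{Sh}^H(X)$ as an honest finite-dimensional separated variety: a priori the relation ``$x$ and $x'$ lie on a connected chain of $H$-small subvarieties'' is generated by countably many algebraic families and its naive quotient is merely a set, and turning it into a variety that carries a map with the properties above is exactly the substance of \cite[Chapter 3]{Kollar95}. This is also the point at which the countable exceptional union $\bigcup D_i$ is genuinely unavoidable, since these exceptional members come from countably many families and need not be contained in any single proper closed subset --- a manifestation of the fact that $\pi_1$ is not an algebraic invariant. As all of this is carried out in loc.\,cit., I would simply invoke it.
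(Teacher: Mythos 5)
The paper gives no proof of this proposition at all — it is stated with the citation ``cf.~\cite[Theorem 3.6]{Kollar95}'' and left at that, which is exactly what you ultimately do as well. Your expository summary of Kollár's construction (Chow/Hilbert-scheme parametrization of $H$-small subvarieties, monodromy constancy on a dense open, the countable exceptional union $\bigcup D_i$, and the quotient-by-families machinery) is accurate and consistent with the source, so the proposal is correct and takes essentially the same approach as the paper, just with more commentary.
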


Now, we apply {\hyperref[defn-shafarevich]{Definition   \ref*{defn-shafarevich}}}  and {\hyperref[prop-kol-sha]{Proposition  \ref*{prop-kol-sha}}} to our specific setting. 
Let $Y$ be a normal projective variety and let $\chi:\pi_1(Y)\rightarrow \text{GL}(r,\mathbb{C})$ be a representation. 
%By Selberg's lemma (\cite{Sel60}), every finitely generated linear group over $\CC$ is virtually torsion-free, and consequently there exists a finite \'etale cover 
Let $\pi:Y'\rightarrow Y$ be a finite \'etale cover such that the algebraic Zariski closure $\overline{G_{Y'}}$ of the image $G_{Y'}:=\chi(\pi_*(\pi_1(Y')))$ of the following composed map
\[
\chi':\pi_1(Y') \xrightarrow{\pi_*} \pi_1(Y) \xrightarrow{\chi} \text{GL}(r,\CC)
\]
is connected. 
Moreover, denote by $\text{Rad}(\overline{G_{Y'}})\lhd \overline{G_{Y'}}$ for the solvable radical of $\overline{G_{Y'}}$. 
Then the quotient $\overline{G_{Y'}}/\text{Rad}(\overline{G_{Y'}})$ is torsion-free. 
Denote by $K\lhd \pi_1(Y')$ the kernel of the map $\pi_1(Y')\rightarrow \overline{G_{Y'}}/\text{Rad}(\overline{G_{Y'}})$, which is normal in $\pi_1(Y')$.  
Then it follows from {\hyperref[prop-kol-sha]{Proposition  \ref*{prop-kol-sha}}} that there exists a dominant almost holomorphic map (which is the $K$-Shafarevich map) $\text{sh}_{Y'}^K:Y'\dashrightarrow \text{Sh}^K(Y')$ to a smooth projective variety. 
In other words, there exists a dense Zariski open subset $Y'_{\circ}\subset Y'$ such that the restriction $\text{sh}^K_{Y'}|_{Y'_{\circ}}$ is well-defined and proper. %This map $\text{sh}_{Y'}^K$ is called the \emph{$K$-Shafarevich map} of $Y'$ (cf.~\cite[Definition 3.5]{Kollar95}). In particular, we borrow the following result which sits in a more general setting:
Take  a very general point $x\in Y'_{\circ}$ and let $Z$ be a subvariety through $x$, with normalisation $n:\widetilde{Z}\rightarrow Z$.   
Then, the rational map $\text{sh}^K_{Y'}$ maps $Z$ to a point if and only if the composed map
\[
\pi_1(\widetilde{Z})\xrightarrow{n_*} \pi_1(Y') \rightarrow \overline{G_{Y'}}\rightarrow \overline{G_{Y'}}/\text{Rad}(\overline{G_{Y'}})
\]
has finite image (cf.~{\hyperref[defn-shafarevich]{Definition   \ref*{defn-shafarevich}}}).  
We refer readers to \cite{CCE15} for more information on the Shafarviech maps and Shafarviech varieties. 

The following theorem is crucial for our proof of {\hyperref[t.mainthm]{Theorem \ref*{t.mainthm}}}.

\begin{thm}[{\cite[Theorem 1]{CCE15}}]
	\label{t.basemfdShafarevichmap}
With the notation above, assume that $Y'$ has at worst klt singularities. Then the smooth projective variety $\text{Sh}^K(Y')$ is of general type.
\end{thm}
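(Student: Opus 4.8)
Since the smooth projective case of this statement is \cite[Theorem 1]{CCE15}, the plan has two halves: reduce to the smooth projective case, and then recall the structure of the Campana--Claudon--Eyssidieux argument. For the reduction, first I would take a resolution $\nu\colon\widehat{Y}\to Y'$; since $Y'$ is klt it has rational singularities, so by Takayama's theorem the natural morphism induces an isomorphism $\pi_1(\widehat{Y})\xrightarrow{\sim}\pi_1(Y')$. Transporting $\chi'$ along it gives a representation of $\pi_1(\widehat{Y})$ with the same algebraic Zariski closure $\overline{G_{Y'}}$, the same solvable radical, and hence the same distinguished normal subgroup $K$; and since the $K$-Shafarevich map is defined only up to birational equivalence and its defining property (cf.~{\hyperref[prop-kol-sha]{Proposition \ref*{prop-kol-sha}}} and \cite{Kollar95}) involves only fundamental groups of subvarieties, the varieties $\textup{Sh}^K(\widehat{Y})$ and $\textup{Sh}^K(Y')$ are birational, in particular of the same Kodaira dimension. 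So I may assume $Y'$ smooth projective; write $\Gamma:=\overline{G_{Y'}}/\textup{Rad}(\overline{G_{Y'}})$, a connected semisimple complex algebraic group, and $\tau\colon\pi_1(Y')\to\Gamma(\mathbb{C})$ for the induced representation, which is Zariski-dense, so that $S:=\textup{Sh}^K(Y')$ is exactly the Shafarevich variety of $\tau$. After a further finite \'etale base change and replacing $S$ by a smooth birational model, I may also assume that $\textup{sh}^K$ is a morphism over a big open subset and that $\tau$ is generically large on $S$, i.e.\ $S$ is its own $\tau$-Shafarevich variety.

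Next I would produce equivariant harmonic maps at every place. Spreading $\tau$ out over a finitely generated subfield of $\mathbb{C}$: at each archimedean place $v$ the image is Zariski-dense, hence reductive, and Corlette's existence theorem (with Eells--Sampson for the K\"ahler target) gives a $\tau$-equivariant harmonic map $h_v\colon\widetilde{Y'}\to\mathcal{D}_v$ into the symmetric space of $\Gamma(\mathbb{C}_v)$; at each non-archimedean place where the image is unbounded, Gromov--Schoen gives a $\tau$-equivariant harmonic map $h_v\colon\widetilde{Y'}\to\mathcal{B}_v$ into the Bruhat--Tits building. Sampson's Bochner formula makes each $h_v$ pluriharmonic, and the factorization theorems (Carlson--Toledo, Jost--Zuo, Mochizuki in the archimedean case; Gromov--Schoen and Eyssidieux in the building case) show that $h_v$ descends, after Stein factorization, to a morphism $Y'\to T_v$ onto a normal projective variety with at most finite monodromy along its fibres.

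Then I would assemble these and extract positivity. By the analysis behind Eyssidieux's theorem on holomorphic convexity of reductive covers, the fibres of $\textup{sh}^K$ coincide with the fibres of the product map $Y'\to\prod_v T_v$; in particular $S$ dominates every $T_v$ and, generically on $S$, the combined differential $\bigoplus_v dh_v$ is injective. Applying Simpson's non-abelian Hodge correspondence to the semisimplification of $\tau$ (with its tame, nilpotent-residue behaviour along a good compactification of $S$) produces a polystable system of Higgs bundles $(E,\theta)$ on $S$, with $\theta\colon E\to E\otimes\Omega^1_S$; the injectivity just obtained forces $\theta$ to be sufficiently non-degenerate, so that iterating it yields a big coherent subsheaf of some symmetric power $\mathrm{Sym}^m\Omega^1_S$ --- equivalently, the Hodge metric induces a possibly degenerate K\"ahler metric on $S$ with semi-negative holomorphic sectional curvature, strictly negative over a Zariski-dense open set. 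By the Campana--P\u{a}un orbifold generic semi-positivity theorem --- or, via the metric, a Wu--Yau/Diverio--Trapani curvature argument together with Miyaoka's generic semi-positivity --- $K_S$ is big, i.e.\ $S=\textup{Sh}^K(Y')$ is of general type.

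The hard part, I expect, is the assembly step together with the non-archimedean half of the harmonic-map construction: proving that the archimedean symmetric-space maps and the $p$-adic building maps \emph{jointly} cut out precisely the Shafarevich fibration of $\tau$, neither coarser nor finer, is the technical core and is where Eyssidieux's work on reductive covers is genuinely used. A secondary difficulty is the final positivity step, where $\theta$ is only \emph{generically} non-degenerate, so the locus where the Hodge metric degenerates must be controlled --- this is exactly why one needs the orbifold refinement of generic semi-positivity rather than its naive form. The klt hypothesis on $Y'$ enters only in the first reduction, guaranteeing rational singularities and hence the invariance of $\pi_1$ under resolution.
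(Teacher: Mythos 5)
Your reduction to the smooth projective case — resolving $Y'$ and invoking Takayama's theorem on $\pi_1$-invariance of resolutions of klt singularities to conclude that the Shafarevich varieties of $\widehat{Y}$ and $Y'$ agree up to birational equivalence — is exactly the paper's proof, which then simply cites \cite[Th\'eor\`eme 1]{CCE15} for the smooth case. The substantial remainder of your write-up is an expository recapitulation of the Campana--Claudon--Eyssidieux argument, which the paper treats as a black box, so the two proofs are the same in substance.
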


\begin{proof}
	Let $\eta:\widetilde{Y}'\rightarrow Y'$ be a resolution. By \cite[Theorem 1.1]{Tak03}, the push-forward $\eta_*:\pi_1(\widetilde{Y}')\rightarrow \pi_1(Y')$ is an isomorphism. In particular, the composed map 
	\[
	\widetilde{Y}' \rightarrow Y' \dashrightarrow \text{Sh}^K(Y')
	\]
	is a Shafarevich map for $K\lhd \pi_1(\widetilde{Y}')$. Then the statement follows immediately from \cite[Th\'eor\`eme 1]{CCE15} and the discussion above.
\end{proof}

In the sequel of this section, we study the periodic point of certain groups acting on a projective varieties. By the Borel fixed point theorem, we generalize \cite[Theorem 4.1]{LOY19} to the solvable group case to get the first main result of this section.
Compared with \cite[Theorem 4.1]{LOY19}, our argument is in the line of the theory of algebraic group.

Let $\sigma:G\times F\to F$ be a group action of $G$ on a projective variety $F$.
We say that $G$ has a \textit{fixed point} $y\in F$, if  $\sigma(g,y)=y$ for any $g\in G$.
We say that $G$ has a \textit{periodic point} $y\in F$, if there exists a positive integer $m$ such that $\sigma(g^m,y)=y$ for any $g\in G$. 

\begin{prop}\label{prop-sol-fixed}
	Let $G$ be a group acting on a projective variety $F$ via a group homomorphism $\rho: G\to \textup{Aut}(F)$ to the automorphism group of $F$ such that the image $\rho(G)$ is virtually solvable and there is a $G$-linearized ample line bundle $L$ on $F$.
	Then $G$ has a periodic point on $F$.	
\end{prop}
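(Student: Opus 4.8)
The plan is to reduce to a linear action on a projective space, replace the abstract group by a linear algebraic group by taking a Zariski closure, and then invoke the Borel fixed point theorem. First I would use the $G$-linearized ample line bundle $L$: for $m\gg 0$ the line bundle $mL$ is very ample, and since a linearization extends canonically to all tensor powers, $G$ acts linearly on $V:=H^{0}(F,mL)^{*}$ in such a way that the closed embedding $F\hookrightarrow\PP(V)$ attached to the complete linear system $|mL|$ is $G$-equivariant, with $F$ spanning $\PP(V)$. This yields a homomorphism $\tilde\rho\colon G\to\mathrm{GL}(V)$ whose induced action on $\PP(V)$ leaves $F$ invariant and restricts to $\rho$ on $F$. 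Since $F$ is irreducible and linearly spans $\PP(V)$, any element of $\mathrm{GL}(V)$ acting trivially on $F$ is a scalar; hence $\tilde\rho(G)$ is a central extension of $\rho(G)$ by a subgroup of $\CC^{*}\cdot\mathrm{id}$, and a central extension of a virtually solvable group is virtually solvable, so the subgroup $\tilde\rho(G)\subseteq\mathrm{GL}(V)$ is virtually solvable.

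Next I would pass to the Zariski closure $\overline{G}:=\overline{\tilde\rho(G)}\subseteq\mathrm{GL}(V)$, a linear algebraic group. It is again virtually solvable: if $\Gamma_{0}\lhd\tilde\rho(G)$ is a solvable subgroup of finite index, then $\overline{\Gamma_{0}}\lhd\overline{G}$ is solvable (the Zariski closure of a solvable subgroup of an algebraic group is solvable) and $\overline{G}/\overline{\Gamma_{0}}$ is an algebraic group carrying a Zariski-dense finite subgroup, hence is finite. In particular, the identity component $\overline{G}^{0}$ is a connected closed subgroup of finite index; since a finite-index closed subgroup of a connected algebraic group is the whole group, $\overline{G}^{0}\subseteq\overline{\Gamma_{0}}$, and therefore $\overline{G}^{0}$ is a connected solvable linear algebraic group. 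Moreover $\overline{G}$ preserves the closed subvariety $F\subseteq\PP(V)$, this being a closed condition satisfied on the dense subset $\tilde\rho(G)$, so $\overline{G}^{0}$ acts algebraically on the nonempty complete variety $F$.

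Now the Borel fixed point theorem yields a nonempty closed fixed locus $F^{\overline{G}^{0}}\subseteq F$, which is stable under all of $\overline{G}$ because $\overline{G}^{0}\lhd\overline{G}$. Put $N:=[\overline{G}:\overline{G}^{0}]<\infty$. Then $g^{N}\in\overline{G}^{0}$ for every $g\in\overline{G}$, so $g^{N}$ fixes $F^{\overline{G}^{0}}$ pointwise; applying this to $\tilde\rho(g)$ for $g\in G$ and using $\tilde\rho(g)^{N}=\tilde\rho(g^{N})$, we obtain $\sigma(g^{N},y)=y$ for all $g\in G$ and all $y\in F^{\overline{G}^{0}}$, so any such $y$ is a periodic point of $G$ with uniform period $N$. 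I expect the main technical burden to be propagating virtual solvability twice — first from $\rho(G)$ to the linear image $\tilde\rho(G)$, which relies on $F$ being irreducible and spanning $\PP(V)$, and then through the Zariski closure — together with verifying that Borel's theorem genuinely applies to the algebraic $\overline{G}^{0}$-action on $F$; once these are in place the remainder is formal, and the appearance of the period $N$ rather than a genuine $G$-fixed point is exactly the price of replacing $\tilde\rho(G)$ by its possibly disconnected Zariski closure.
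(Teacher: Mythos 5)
Your proof is correct and follows essentially the same route as the paper's: both use the $G$-linearized ample line bundle to realize the action inside a linear algebraic group, show that the Zariski closure of the image is (virtually) solvable, pass to a connected solvable subgroup of finite index, and apply Borel's fixed point theorem, with periodicity coming from the finite index. The only cosmetic difference is that the paper works directly in $\mathrm{Aut}_L(F)$ and replaces $G$ by finite-index subgroups, while you lift to $\mathrm{GL}(V)$ and quotient out the scalar kernel before taking the identity component.
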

\begin{proof}
	After replacing  $G$ by a finite index subgroup, we can assume that $\Gamma:=\rho(G)$ is solvable. By hypothesis, $\rho(G)$ is a subgroup of the linear algebraic group $\textup{Aut}_L(F)$.
	Denote by $[\Gamma,\Gamma]$ the commutator subgroup of $\Gamma$.  
	Recall that for an algebraic group $H$, its commutator group $[H,H]$ is a closed subgroup of $H$, hence also algebraic by \cite[Chapter I, \S 2.3 Proposition, pp.~58-59]{Bor69}.
	Hence,  %$\overline{[\Gamma,\Gamma]}\subseteq [\overline{\Gamma},\overline{\Gamma}]$ and thus
	we have $\overline{[\Gamma,\Gamma]}=[\overline{\Gamma},\overline{\Gamma}]$ by \cite[Chapter I, \S2.1(e), p.~57]{Bor69}.	
	Inductively, the closure of the $p$-th derived group  $\overline{\Gamma^{(p)}}$ coincides with the $p$-th derived group $\overline{\Gamma}^{(p)}$ of $\overline{\Gamma}$.
	
	By the solvability of $\Gamma$,  the $p$-th derived group $\Gamma^{(p)}$ is trivial for some $p<\infty$ (i.e., the derived length of $\Gamma$ is finite).
	Hence, $\overline{\Gamma}^{(p)}$ is also trivial for some $p<\infty$ and thus $\overline{\Gamma}$ is a solvable algebraic group.
	With 
	%$\overline{\Gamma}$ 
	$G$ replaced by some subgroup of finite index if necessary, we can  assume that $\overline{\Gamma}$ is connected.
	By Borel's fixed-point theorem (cf.~\cite[Theorem 10.4, p.~137]{Bor69}), the action of $\overline{\Gamma}$ on $F$ has a fixed point and our proposition is proved by considering the exact sequence $1\to\ker\to G\to G|_F\to 1$.
\end{proof}

In what follows, we recall the notion of {\it special varieties} in the sense of Campana. 
Roughly speaking, a special variety is a compact complex variety in the Fujiki class $\mathcal{C}$ (i.e., bimeromorphic to a compact K\"ahler manifold) that admits no orbifold-theoretic (in the sense of F. Campana) meromorphic fibration onto a positive-dimensional variety of (orbifold) general type. For the precise definition, see \cite[Definition 2.1(2)]{Cam04}. 

%we refer readers to \cite[\S 2]{Cam04} for the definition of \textit{special} (analytic) varieties in the sense of Campana. 
Note that there are two fundamental examples of special varieties (\cite[Theorem 3.22, Theorem 5.1]{Cam04}) as indicated in the following lemma; in fact, every special variety is essentially built up from them (\cite[Example 2.3, \S 6.5]{Cam04}).

\begin{lemme}\label{lem-kappa-special}
Let $X$ be 
a compact complex variety in the Fujiki class $\mathcal{C}$.
%a compact K\"ahler manifold. 
Then $X$ is special if one of the following holds:
\begin{itemize}
\item[\rm(1)] $X$ is rationally connected;
\item[\rm(2)] The Kodaira dimension vanishes, i.e., $\kappa(X)=0$.	
\end{itemize}
\end{lemme}

\begin{rmq}
One should notice that the notion `rational connectedness' in \cite{Cam04} (cf. \cite[\S 3.3]{Cam04}) is usually called `rational chain connectedness' (cf. \cite[3.2 Definition]{Kollar96}), and this is why in \cite[Theorem 3.22]{Cam04} the smoothness of $X$ is required. In this article, we take the usual definition of `rational connectedness' (i.e., any two general points can be connected by a rational curve), and since this is by definition a bimeromorphic property for complex varieties, we do not need the smoothness condition in the lemma. Let us remark that `rational chain connectedness' is however not a bimeromorphic property, this is why the lemma does not hold in this generality; yet the two notions `rational connectedness' and `rational chain connectedness' coincide for varieties with dlt singularities by \cite[Corollary 1.5(2)]{HM07}.      
\end{rmq}

Besides, special varieties have the following property, known as the `weak speciality' (cf. \cite[\S 9]{Cam04}):
%Besides, the following lemma is known to experts.
\begin{lemme}[{cf.~\cite[Proposition 9.27]{Cam04}}]\label{lem-special-no-to-gt}
Let $X$ be a special variety. 
%lying in the Fujiki class $\mathcal{C}$ (i.e., a special variety bimeromorphic to a compact K\"ahler manifold). 
Then any finite \'etale cover of $X$ can never dominate a positive dimensional variety of general type.
\end{lemme}

%The above property is also said to be \textit{weak speciality} (cf.~\cite[Section 9]{Cam04}).  
In the sequel we will prove the main results of this section concerning the special K\"ahler groups. First recall that a group $G$ is said to be a \textit{K\"ahler group} if $G$ can be realized as the fundamental group of a compact K\"ahler manifold. 
Further, a K\"ahler group $G$ is said to be \textit{special}, if $G$ can be realized as the fundamental group of a special compact K\"ahler manifold.

Applying \cite{Tak03} to the resolution of some projective klt pair, we immediately have the following lemma.
\begin{lemme}\label{lem-special-klt}
Let $(X,\Delta)$ be a projective klt pair.
If $X$ is special, then the fundamental group $\pi_1(X)$ is a special K\"ahler group.
\end{lemme}

\begin{prop}\label{prop-special-Kahler-abe}
Any linear quotient of a special K\"ahler group is virtually abelian.
\end{prop}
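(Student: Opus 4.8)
Let $G=\pi_1(M)$ with $M$ a special compact K\"ahler manifold, and let $\rho\colon G\to\mathrm{GL}(r,\CC)$ be a representation whose image is the given linear quotient. The plan is to strip off the semisimple part of $\rho$ by means of the Shafarevich machinery of this section and then to deal with the residual solvable part. First I would place myself in the notation fixed before Theorem~\ref{t.basemfdShafarevichmap}, with $Y=M$ and $\chi=\rho$: after passing to a finite \'etale cover $\pi\colon M'\to M$ one may assume that the Zariski closure $\overline{G_{M'}}$ of $G_{M'}:=\rho(\pi_*(\pi_1(M')))$ is connected; write $R:=\mathrm{Rad}(\overline{G_{M'}})$, let $S:=\overline{G_{M'}}/R$ be the (connected, semisimple) quotient and set $K:=\ker(\pi_1(M')\to S)$. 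Since $G$ and $\pi_*(\pi_1(M'))$ are commensurable, it suffices to prove that $G_{M'}$ is virtually abelian.

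Next, kill the semisimple part. By Theorem~\ref{t.basemfdShafarevichmap}, in the compact K\"ahler form established in \cite{CCE15}, the variety $\mathrm{Sh}^K(M')$ is of general type. But $M'\to M$ is finite \'etale and $M'\dashrightarrow\mathrm{Sh}^K(M')$ is dominant, so the weak speciality of $M$ (Lemma~\ref{lem-special-no-to-gt}) forces $\mathrm{Sh}^K(M')$ to be a point. By Definition~\ref{defn-shafarevich}, applied with $Z=M'$ --- legitimate since a countable union of proper subvarieties of $M'$ is again proper --- this says that the image of $\pi_1(M')$ in $\pi_1(M')/K$ is finite, i.e.\ $K$ has finite index in $\pi_1(M')$. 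Hence the image of $\pi_1(M')$ in $S$ is simultaneously finite and Zariski-dense, so $S$, being connected, is trivial; therefore $\overline{G_{M'}}=R$ is solvable and $G_{M'}$ is a solvable linear group.

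It remains to upgrade ``virtually solvable'' to ``virtually abelian'', and this is the step I expect to be the main obstacle. Since $\overline{G_{M'}}$ is now connected solvable, the Lie--Kolchin theorem --- alternatively, an iteration of Proposition~\ref{prop-sol-fixed} with Borel's fixed-point theorem, producing a $\pi_1(M')$-invariant complete flag on $\CC^r$ after a further finite-index reduction --- places $G_{M'}$ inside a Borel subgroup $B=T\ltimes U$. Passing to the diagonal gives a homomorphism $G_{M'}\to T$ onto an abelian group with kernel $N=G_{M'}\cap U$ unipotent, hence nilpotent; as $U(\CC)$ is torsion-free, $G_{M'}$ is virtually abelian precisely when $N=\{1\}$, and proving this is where genuine input beyond the reductive reduction is required. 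The ``torus part'' $G_{M'}\to T$ factors, up to finite index, through the Albanese map of $M$, and is therefore constrained by the fact that a special variety admits no dominant map onto a positive-dimensional subvariety of an abelian variety which is not a translate of a subtorus (Ueno's structure theorem together with weak speciality); the unipotent part, on the other hand, is governed by the Malcev completion of $\pi_1(M')$, on which speciality imposes severe restrictions --- via Deligne's mixed Hodge structure, via the symmetric differentials of Brunebarbe--Klingler--Totaro, or via the results of Delzant and Delzant--Gromov on solvable K\"ahler groups, the last of which also rule out the non-polycyclic (e.g.\ Baumslag--Solitar-type) possibilities. Putting these together forces $N=\{1\}$, so $G_{M'}\subseteq T$ is abelian and $\rho(G)$, which contains $G_{M'}$ with finite index, is virtually abelian. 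The reductive reduction is routine; the solvable case is the crux, and it is there that the real work of \cite{CCE15} is concentrated.
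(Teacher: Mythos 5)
Your reduction to the virtually solvable case is correct and follows essentially the same route as the paper: pass to a finite \'etale cover to make the Zariski closure connected, form the Shafarevich map relative to $K=\ker(\pi_1\to \overline{G}/\mathrm{Rad}(\overline{G}))$, invoke Theorem~\ref{t.basemfdShafarevichmap} to see that $\mathrm{Sh}^K$ is of general type, and use weak speciality (Lemma~\ref{lem-special-no-to-gt}) to force it to be a point, hence the semisimple part is trivial. The paper phrases this via \cite[Th\'eor\`eme~6.5]{CCE15} directly rather than through Theorem~\ref{t.basemfdShafarevichmap}, but the content is the same; your application of Definition~\ref{defn-shafarevich} with $Z=M'$ is fine.

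The genuine gap is the final step, from virtually solvable to virtually abelian, and you diagnose it correctly yourself: ``proving this is where genuine input beyond the reductive reduction is required.'' What you then offer is not a proof but a list of directions --- Lie--Kolchin plus a $T\ltimes U$ decomposition of a Borel, Albanese constraints on the torus part, Malcev completion / mixed Hodge structure / Brunebarbe--Klingler--Totaro / Delzant and Delzant--Gromov for the unipotent part --- followed by the assertion ``Putting these together forces $N=\{1\}$.'' None of these is carried out, and in particular it is not explained why the unipotent part of $G_{M'}$ must vanish rather than merely being nilpotent (a priori, a connected solvable algebraic group could have $G_{M'}$ meeting $U$ in an infinite nilpotent group). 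The paper closes exactly this step in one line by citing \cite[Corollaire~4.2]{CCE15}, which says precisely that a virtually solvable linear quotient of a K\"ahler group is virtually abelian; this is a nontrivial theorem whose proof is indeed in the circle of ideas you gesture at (Delzant's work on solvable K\"ahler groups is an ingredient), but it is an input that must be quoted, not re-derived in a paragraph. As written your argument also overshoots: concluding $N=\{1\}$ would make $G_{M'}$ literally abelian, whereas the correct (and attainable) conclusion is only virtual abelianness after a further finite-index passage.
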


%Before we prove {\hyperref[prop-special-Kahler-abe]{Proposition   \ref*{prop-special-Kahler-abe}}}, we refer readers to 

%Now we come back to the proof of {\hyperref[prop-special-Kahler-abe]{Proposition   \ref*{prop-special-Kahler-abe}}}. 
\begin{proof}%[Proof of {\hyperref[prop-special-Kahler-abe]{Proposition   \ref*{prop-special-Kahler-abe}}}]
Let $X$ be a compact K\"ahler manifold and $G:=\pi_1(X)$ the fundamental group.
Let $\chi:G\to \textup{GL}(r,\mathbb{C})$ be a linear representation.
Let $G_X:=\chi(G)$ be the image and $\overline{G_X}$ its Zariski closure. 
We will prove that $G_X$ is virtually abelian. To this end, we can freely replace $G$ (and thus $G_X$ and $\overline{G_X}$) by a finite index subgroup, and correspondingly replace $X$ by a finite \'etale cover, 
%After replacing $X$ by a finite \'etale cover, 
so that we may assume that $\overline{G_X}$ is a connected algebraic group.
Denote by $\overline{R}$ the solvable radical of $\overline{G_X}$ and let $R:=\overline{R}\cap G_X$.
Let $s:G_X\to G_X/R$ be its natural quotient.

By \cite[Th\'eor\`eme 6.5 and its proof]{CCE15}, after replacing $X$ by a further  \'etale cover, there is a proper modification $X'\to X$ such that $X'$ dominates a variety  which is bimeromorphic to the total space of a smooth fibration $\textup{Sh}^{K_1}(X)\to W$ over the Shafarviech variety $\textup{Sh}^{K_2}(X)$ such that  $W$ is of general type, where $K_1$ is the kernel of $\chi$ and $K_2$ is the kernel of $s\circ\chi$. 
Since $X$ is special, our $W$ has to be a single point (cf.~{\hyperref[lem-special-no-to-gt]{Lemma  \ref*{lem-special-no-to-gt}}}).
This in turn implies that the Shafarviech variety $\textup{Sh}^{K_2}(X)$  is a single point and thus the image $s(G_X)=s\circ\chi(\pi_1(X))$ is finite.
Hence, $G_X$ itself is virtually solvable (also cf.~\cite[Th\'eor\`eme 6.3]{CCE15}).
Now, it follows from \cite[Corollaire 4.2]{CCE15} that $G_X$ is virtually abelian, which gives our proposition.
\end{proof}

The following corollary is an immediately consequence of {\hyperref[prop-special-Kahler-abe]{Proposition  \ref*{prop-special-Kahler-abe}}}. 
%We refer readers to \cite[Section 11]{GGK19} for the linear representations of the fundamental group of a klt projective variety with the numerical trivial canonical divisor and the vanishing augmented irregularity.

\begin{cor}\label{cor-virtually-abelian}
Let $X$ be a normal projective variety with at worst klt singularities.
Suppose that $X$ is special in the sense of Campana (this is the case when $\kappa(X)=0$; cf.~{\hyperref[lem-kappa-special]{Lemma  \ref*{lem-kappa-special} (2)}}).
Then any linear quotient of $\pi_1(X)$ is virtually abelian.
\end{cor}

\begin{proof}
By {\hyperref[lem-special-klt]{Lemma \ref*{lem-special-klt}}}, 
$\pi_1(X)$ is a special K\"ahler group (cf.~\cite{Tak03}),
noting that the specialness is a birational invariant. 
Then our corollary follows from {\hyperref[prop-special-Kahler-abe]{Proposition  \ref*{prop-special-Kahler-abe}}}. 
\end{proof}

Now we are in the position to prove the second main result in this section.

\begin{prop}\label{prop_fund_pp}
Let $G$ be a special K\"ahler group.
Suppose that  $G$ acts on a projective variety $F$ via a group homomorphism $\rho:G\to\Aut(F)$ to the automorphism group of $F$, such that there exists a $G$-linearized  ample line bundle $L$ on $F$  (cf. \cite[Defintion 3.2.3, Lemma 3.2.4]{Brion18}).
Then $G$ has a periodic point on $F$.
\end{prop}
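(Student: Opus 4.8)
**Proof plan for Proposition \ref*{prop_fund_pp}.**

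The strategy is to reduce the general case to the virtually solvable case treated in Proposition \ref*{prop-sol-fixed} by showing that the image $\rho(G)\subseteq\Aut_L(F)$ is virtually solvable — in fact virtually abelian. The point is that $\Aut_L(F)$ is a linear algebraic group (as $L$ is $G$-linearized and ample, the action on the space of sections of a suitable power embeds $\Aut_L(F)$ into some $\mathrm{GL}(N,\CC)$), so $\rho$ factors as a linear representation $G\to\mathrm{GL}(N,\CC)$. Hence $\rho(G)$ is a linear quotient of the special K\"ahler group $G$.

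The key input is Proposition \ref*{prop-special-Kahler-abe}: any linear quotient of a special K\"ahler group is virtually abelian. Applying it to the representation $G\to\mathrm{GL}(N,\CC)$ obtained above, we conclude that $\rho(G)$ is virtually abelian, hence in particular virtually solvable. Now the hypotheses of Proposition \ref*{prop-sol-fixed} are met — $G$ acts on the projective variety $F$ with virtually solvable image and there is a $G$-linearized ample line bundle $L$ on $F$ — so that proposition immediately yields a periodic point of $G$ on $F$.

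I would carry out the steps in this order: first recall/invoke that $\Aut_L(F)$ is linear algebraic and that the $G$-linearization of $L$ makes $\rho$ factor through it, producing a genuine finite-dimensional linear representation of $G$; second, invoke Proposition \ref*{prop-special-Kahler-abe} to see the image is virtually abelian; third, feed this into Proposition \ref*{prop-sol-fixed}. The only slightly delicate point — and the one I'd be most careful about — is the very first step: making precise that the $G$-action on $F$ together with the $G$-linearization of an ample $L$ genuinely gives a linear representation $G\to\mathrm{GL}(N,\CC)$ whose image has the same (virtual) group-theoretic properties as $\rho(G)$, i.e., that passing to $\Aut_L(F)\hookrightarrow\mathrm{GL}(N,\CC)$ does not lose the relevant structure. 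This is standard (it is exactly the content of the references to \cite{Brion18} cited in the statement), so once that is spelled out the proof is essentially a concatenation of the two preceding propositions.
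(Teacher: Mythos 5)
Your proposal is correct and takes exactly the same route as the paper: the paper's proof of this proposition is the single sentence that it follows directly from Proposition \ref{prop-sol-fixed} and Proposition \ref{prop-special-Kahler-abe}, which is precisely the concatenation you spell out (with the justification that the $G$-linearized ample $L$ makes $\Aut_L(F)$ linear algebraic, so $\rho$ factors through a finite-dimensional linear representation, left implicit in the paper).
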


\begin{proof}
	It follows directly from {\hyperref[prop-sol-fixed]{Proposition \ref*{prop-sol-fixed}}} and {\hyperref[prop-special-Kahler-abe]{Proposition \ref*{prop-special-Kahler-abe}}}.
\end{proof}

Note that the solvability of special K\"ahler groups has its own  interests (cf.~\cite[Conjecture 7.1]{Cam04}). Thus we may ask the following question.

\begin{ques}\label{ques-kernel-sol}
	Let $G$ be a special K\"ahler group.
	Suppose that $G$ acts on a projective variety $F$ via a group homomorphism $G\to\Aut(F)$ such that there is a $G$-linearized  ample line bundle $L$ on $F$.
	Can we give some descriptions on the kernel of $\rho: G\to G|_F$? 
	Will $\ker\rho$ (and hence $G$) be virtually solvable?
\end{ques}

In our case, the solvability of $G$ follows from the solvability of $\ker\rho$ due to the exact sequence $1\to \ker\rho\to G\to G|_F\to 1$  (cf.~{\hyperref[prop-special-Kahler-abe]{Proposition  \ref*{prop-special-Kahler-abe}}}). 
Moreover, {\hyperref[ques-kernel-sol]{Question   \ref*{ques-kernel-sol}}} has a negative answer if we don't assume
the speciality.  
For example, 
it is known that the group given by the presentation
$$\Gamma_g=\left<\alpha_1,\cdots,\alpha_g,\beta_1,\cdots,\beta_g:\prod_{i=1}^g[\alpha_i,\beta_i]=1\right>$$
is K\"ahler.
Indeed, it is the fundamental group of a compact Riemann surface of genus $g$.
However, when $g>1$ the commutator subgroup $[\Gamma_g,\Gamma_g]$ is a free non-abelian group and hence $\Gamma_g$ is not virtually solvable. 
%(cf.~https://arxiv.org/pdf/2101.05905.pdf, Theorem B)?

%\begin{rmq}
%Indeed, the existence of the ample $G$-invariant line bundle has its own interest. 
%In this situation, $G|_F$ would be virtually contained in the identity component $\textup{Aut}_0(F)$ of $\textup{Aut}(F)$ by Fujiki and Lieberman (cf.~\cite{Fuj78} and \cite{Lie78}).
%However, little things were know whether $G$ is solvable or not (cf.~Question \ref{ques-kernel-sol}).
%\end{rmq}

Applying {\hyperref[prop_fund_pp]{Proposition  \ref*{prop_fund_pp}}} to a locally constant fibration, we end up this section with the following corollary. % which will be used in the proof of {\hyperref[mainthm-section]{Theorem \ref*{mainthm-section}}}.
\begin{cor}\label{cor-lc-periodic}
Let $f:X\to Y$ be a locally constant fibration with a fibre $F$.
Suppose that the irregularity $q(F)=0$ and $Y$ is special.
Then the fundamental group $G:=\pi_1(Y)$ acts on $F$ and has a periodic point on it.
\end{cor}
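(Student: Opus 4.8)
The statement combines the structure theory from earlier in the section with the rigidity of special Kähler groups, so the plan is to produce a $G$-linearized ample line bundle on $F$ and then invoke Proposition~\ref{prop_fund_pp}. First I would observe that since $f:X\to Y$ is a locally constant fibration, the fibre $F$ comes equipped with the representation $\rho:\pi_1(Y)\to\Aut(F)$ from Definition~\ref{defn-locally-constant}, so $G=\pi_1(Y)$ already acts on $F$; this gives the first assertion for free. The content is the existence of a periodic point, and by Proposition~\ref{prop_fund_pp} it suffices to check the two hypotheses there: that $G$ is a special Kähler group, and that the $G$-action on $F$ admits a $G$-linearized ample line bundle.

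The first hypothesis should follow because $Y$ is assumed special: by Lemma~\ref{lem-special-klt} (applied with $\Delta=0$, noting $Y$ is smooth hence klt) the fundamental group $\pi_1(Y)$ is a special Kähler group. The substantive step is the second hypothesis. Here I would use the assumption $q(F)=0$. The natural candidate for a $G$-linearized ample bundle is a suitable power of the anti-canonical bundle of $F$, or more robustly, a bundle constructed from $X$ itself: pick an $f$-ample line bundle on $X$ and average/modify it so that its restriction to $F$ carries the $\rho$-action. Concretely, since $\Aut(F)$ acts on $\mathrm{Pic}(F)$ and the connected component $\mathrm{Pic}^0(F)$ is trivial because $q(F)=\dim H^1(F,\mathcal O_F)=0$, the Néron--Severi group $\mathrm{NS}(F)$ is discrete; an ample class on $F$ has a finite $\Aut(F)$-orbit in $\mathrm{NS}(F)$, and after replacing $L$ by the sum over its orbit (still ample) we get an $\Aut(F)$-invariant ample class, hence a class fixed by $\rho(G)$. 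Vanishing of $q(F)$ then forces the obstruction to linearizing (which lives in a group built from $H^1(F,\mathcal O_F^\times)$ torsion and characters) to vanish after passing to a power, giving a $G$-linearized ample line bundle; the cleanest route is to cite the standard linearization criterion (as in \cite[Definition~3.2.3, Lemma~3.2.4]{Brion18}) together with the fact that $\mathrm{Pic}^0(F)=0$.

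With both hypotheses of Proposition~\ref{prop_fund_pp} verified, the conclusion is immediate: $G$ has a periodic point on $F$.

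\medskip
\noindent\textbf{Main obstacle.} The delicate point is upgrading an $\Aut(F)$-\emph{invariant} ample \emph{class} to an honest $G$-\emph{linearized} ample \emph{line bundle}: invariance of a line bundle under a group action does not automatically produce a linearization, and one must control the obstruction. The hypothesis $q(F)=0$ is precisely what kills the relevant $\mathrm{Pic}^0$-contribution, but one still has to handle possible torsion and the passage to a finite power of $L$ (and possibly to a finite-index subgroup of $G$, which is harmless for the existence of a periodic point). I expect this bookkeeping — ensuring the linearization exists over all of $G$ rather than merely a subgroup, and after a uniform power — to be the part requiring the most care, whereas the reduction to Proposition~\ref{prop_fund_pp} and the identification of $\pi_1(Y)$ as a special Kähler group are formal.
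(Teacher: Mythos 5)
Your reduction to Proposition~\ref{prop_fund_pp} and the observation that speciality of $Y$ makes $G=\pi_1(Y)$ a special K\"ahler group are both correct and match the paper. The gap is in the construction of the $G$-linearized ample line bundle on $F$, which is the entire content of the paper's proof. You try to produce an invariant ample class intrinsically on $F$: you assert that ``an ample class on $F$ has a finite $\Aut(F)$-orbit in $\mathrm{NS}(F)$'' and average over that orbit. This finiteness is not justified and fails in general --- $q(F)=0$ gives $\mathrm{Pic}^0(F)=0$ and hence $\mathrm{Pic}(F)=\mathrm{NS}(F)$ discrete, but it gives no control on the image of $\Aut(F)$ (or even of $\rho(G)$) in $\mathrm{GL}(\mathrm{NS}(F))$; there are projective varieties with $q=0$ (K3 surfaces, say) whose automorphism group moves an ample class through an infinite orbit. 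You even remark that the natural candidate is a bundle ``constructed from $X$ itself,'' but then pursue the intrinsic route, which discards the only data that actually forces invariance.

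The paper's argument supplies exactly the missing ingredient: fix an $f$-ample divisor $A$ on $X$ and pull it back along $p_X:F\times Y^{\mathrm{univ}}\to X$. Because $X\cong (F\times Y^{\mathrm{univ}})/G$, the bundle $p_X^*(A-f^*A_Y)$ is \emph{automatically} $G$-linearized for any $A_Y\in\mathrm{Pic}(Y)$. The hypothesis $q(F)=0$ then enters, not to control orbits in $\mathrm{NS}(F)$, but to invoke the see-saw type argument of \cite[Proof of Lemma~2.7]{MW21}: $\mathrm{Pic}(F\times Y^{\mathrm{univ}})\cong\mathrm{Pic}(F)\times\mathrm{Pic}(Y^{\mathrm{univ}})$, so after a suitable twist by $f^*A_Y$ the pullback factors as $\textup{pr}_1^*A_F$ with $A_F\cong A|_F$. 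Since $G$ acts diagonally on $F\times Y^{\mathrm{univ}}$ and $\textup{pr}_1^*A_F$ inherits the $G$-linearization, restricting to a fibre of $\textup{pr}_2$ gives the desired $G$-linearization of the ample bundle $A_F$ on $F$. This sidesteps both the orbit-finiteness issue and the linearization-obstruction bookkeeping you flag as the ``main obstacle'': the linearization exists for free on $F\times Y^{\mathrm{univ}}$, and $q(F)=0$ is only used to descend it to $F$.
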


\begin{proof}
To apply {\hyperref[prop_fund_pp]{Proposition  \ref*{prop_fund_pp}}}, we only need to verify that there is a $G$-linearized ample line bundle $L$ on $F$ (cf.~\cite[Definition 2.6]{MW21}). 
Since $f$ is locally constant, we have the following diagram
\[
\xymatrix{F\times Y^{\textup{univ}}\ar[r]\ar[d]&Y^{\text{univ}}\ar[d]\\
X\ar[r]&Y}
\]
where $Y^{\textup{univ}}$ is the universal cover of $Y$ and $F\times Y^{\textup{univ}}\cong X\times_Y Y^{\textup{univ}}$ with the induced projection $\textup{pr}_1: F\times Y^{\textup{univ}}\to F$ and  $\textup{pr}_2: F\times Y^{\textup{univ}}\to Y^{\textup{univ}}$.
Let us fix an $f$-ample divisor $A$ on $X$ and denote by $p_X: F\times Y^{\textup{univ}}\to X$ the natural projection.
Applying \cite[Proof of Lemma 2.7]{MW21}, we see that there exists a line bundle $A_Y$ on $Y$ such that 
$$p_X^*(A-f^*A_Y)\cong\textup{pr}_1^*A_F$$ 
for some line bundle $A_F$ on $F$.
Since $X\cong (F\times Y^{\textup{univ}})/G$, our $p_X^*(A-f^*A_Y)$ is $G$-linearized, and hence,  
$$g^*p_X^*(A-f^*A_Y)\sim p_X^*(A-f^*A_Y)$$ 
for any $g\in G$ (regarded as an automorphism of $F$); see \cite[Defintion 3.2.3, Lemma 3.2.4]{Brion18}.  
Hence, $\textup{pr}_1^*A_F$ is also $G$-linearized.
Moreover, since $G$ acts on $F\times Y^{\textup{univ}}$ via the diagonal, our $A_F$ is a $G$-linearized ample line bundle, noting that $A_F\cong A|_F$.
\end{proof}

\vskip 2\baselineskip

%{\color{blue} The following is a discussion on $G_1$, and can be removed finally.}
%Let $X\to Y$ be a locally constant fibration. 
%We denote by $G_1:=\{g\in G~|~g|_F=\text{id}\}$ and consider the following diagram:
%\[
%\xymatrix{F\times Y^{\textup{univ}}\ar[r]\ar[d]&Y^{\text{univ}}\ar[d]\\
%F\times (Y^{\text{univ}}/G_1)\ar[r]\ar[d]&Y^{\text{univ}}/G_1\ar[d]\\
%X\ar[r]&Y}
%\]

%From \cite[Theorem 6.5]{CCE15}, we know that:
%Let $Y$ be a compact K\"ahler manifold and $\rho:\pi_1(Y)\to \textup{GL}_N(\mathbb{C})$ a linear representation of its fundamental group.
%Then after replacing $Y$ by a finite \'etale cover, the Shafarevich manifold $\textup{Sh}_\rho(Z)$ associated with $\rho$ is (bimeromorphic to) a compact K\"ahler manifold $Z$ which is the total space of a smooth torus fibration over a manifold of general type.
%$$Y\xrightarrow{\textup{sh}_\rho}Z:=\textup{Sh}_\rho(Y)\xrightarrow{s_\rho}S_\rho(Y)$$
%The manifold $Z$ is bimeromorphic to the Shafarevich manifold of $Y$, obtained from $Y$ by contracting the submanifolds, the fundamental groups of which lie in the kernel of $\rho$.
%The torus fibration (i.e., the submersion $Z\to S_\rho(Y)$) has the tori fibre being the maximal manifolds, on the fundamental groups of which, $\rho$ has an abelian image in $\textup{GL}_N(\mathbb{C})$.
%Note that $\rho$ factors as $\pi_1(Y)\to\pi_1(Z)\to \rho(\pi_1(Y))$ (cf.~\cite[Section 2]{CCE15}).

%\section{Sections of MRC fibrations, Proof of \texorpdfstring{{\hyperref[mainthm-RC]{Theorem \ref*{mainthm-RC}}}}{text}}
\section{Proofs of \texorpdfstring{{\hyperref[mainthm-RC]{Theorem \ref*{mainthm-RC}}}}{text},  \texorpdfstring{{\hyperref[t.mainthm]{Theorem \ref*{t.mainthm}}}}{text} and \texorpdfstring{{\hyperref[c.algebraicallyintegrable]{Corollary \ref*{c.algebraicallyintegrable}}}}{text}}

In this section, we shall prove our main results {\hyperref[mainthm-RC]{Theorem \ref*{mainthm-RC}}} and {\hyperref[t.mainthm]{Theorem \ref*{t.mainthm}}}. 
As an application of {\hyperref[t.mainthm]{Theorem \ref*{t.mainthm}}}, we show {\hyperref[c.algebraicallyintegrable]{Corollary \ref*{c.algebraicallyintegrable}}}.

\begin{assumption}
\label{a.assumption}
Throughout this section, we follow the following assumption and its notations.
\begin{itemize}
    \item Let $(X,\Delta)$ be a projective klt pair, and  $f:X\rightarrow Y$ a locally constant fibration (cf.~{\hyperref[defn-locally-constant]{Definition \ref*{defn-locally-constant}}}) onto a $\mathbb{Q}$-Gorenstein normal projective variety $Y$ such that the relative canonical divisor $-(K_{X/Y}+\Delta)$ is nef. If $Y$ is smooth, then it follows from {\hyperref[t.structureantinef]{Theorem \ref*{t.structureantinef}}} below that $f$ is a locally constant fibration.
    \item Let $F$ be a fibre of $f$ and $G:=\pi_1(Y)$. 
    \item Assume that there exists a very ample  divisor $A_F$ on $F$ which is $G$-linearized (cf.~\cite[Definition 3.2.3, Lemma 3.2.4]{Brion18}). If $Y$ is smooth, then the existence of such $A_F$ follows from {\hyperref[t.structureantinef]{Theorem \ref*{t.structureantinef}}}. 
    \item By {\hyperref[defn-locally-constant]{Definition \ref*{defn-locally-constant}}}, our $G$ acts on  $F$ via the homomorphism $G\to \textup{Aut}_{A_F}(F)$.  
    \item Denote by $\rho: \pi_1(Y) \to \text{GL}(V,\CC)$ the representation induced by $ \pi_1(Y)\rightarrow \text{Aut}(F,\Delta_F)$, where $V=\Coh^0(F, A_F)$.
Denote by $Y^{\textup{univ}}$ the universal cover of $Y$. 
Then we have the following (Cartesian) commutative diagram:
\begin{center}
\begin{tikzpicture}[scale=2.5]
\node (A) at (0,0) {$Y$.};
\node (B) at (0,1) {$Y^{\textup{univ}}$};
\node (A1) at (-1.8,0) {$X$};
\node (B1) at (-1.8,1) {$X{\times}_YY\univ\simeq Y\univ\times F$};
\node (E) at (-0.9, 0.5) {$\square$};
\path[->,font=\scriptsize,>=angle 90]
(B) edge node[right]{$p_Y$} (A)
(B1) edge node[left]{$p_X$} (A1)
(B1) edge node[above]{$f\univ=\pr_1$} (B)
(A1) edge node[below]{$f$} (A);
\end{tikzpicture}
\end{center}
where $G$ acts on $Y\univ\times F$ via diagonal and $X\cong (Y\univ\times F)/G$.
Let $\pr_2:X{\times}_YY\univ\simeq Y\univ\times F\to F$ be the second  projection.
\end{itemize}
\end{assumption}

First recall the following structure theorem and criterion for locally constant fibrations. It is essentially proved in \cite{CCM19} and \cite{MW21} and it is the starting point for the proofs of our {\hyperref[mainthm-RC]{Theorem \ref*{mainthm-RC}}} and {\hyperref[t.mainthm]{Theorem \ref*{t.mainthm}}}.

\begin{thm}[{\cite{CCM19}, \cite[Theorem 4.7]{MW21}}]	\label{t.structureantinef}
Let $(X,\Delta)$ be a klt pair, and  $f:X\rightarrow Y$ a fibration to a smooth projective variety. Let $F$ be a general fibre of $f$. If the relative anti-log canonical divisor $-(K_{X/Y}+\Delta)$ is nef, then  $f$ is a locally constant fibration induced by a  representation $\pi_1(Y)\rightarrow \Aut(F,\Delta_F)$. Moreover, there exists a sufficiently ample line bundle $A$ on $F$  which is $\pi_1(Y)$-linearized (cf. \cite[Defintion 3.2.3, Lemma 3.2.4]{Brion18}). 
\end{thm}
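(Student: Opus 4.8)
The plan is to prove this as the logarithmic and relative incarnation of the decomposition theorem for projective manifolds with nef anticanonical divisor, following \cite{CCM19} and \cite[Theorem~4.7]{MW21}. The guiding principle is to split the general fibre into its rationally connected part and its Calabi--Yau part, treat each separately, and in both cases produce a \emph{numerically flat} vector bundle on the relevant base; by \cite[Theorem~1.18, Corollary~1.19]{DPS94} and \cite[\S3]{Sim92} such a bundle is automatically flat, and the attached monodromy representation is precisely what promotes analytic local triviality to local constancy.

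First I would restrict $-(K_{X/Y}+\Delta)$ to a general fibre to obtain a klt pair $(F,\Delta_F)$ with $-(K_F+\Delta_F)$ nef, and form the relative MRC fibration $g\colon X\dashrightarrow Z$ over $Y$, with structure morphism $h\colon Z\to Y$; the general fibre of $g$ is the MRC quotient of $F$, hence rationally connected, while the general fibre of $h$ is non-uniruled. On the base, $-K_{Z/Y}$ is pseudo-effective (it descends, via the canonical bundle formula, from the nef divisor $-(K_{X/Y}+\Delta)$), whereas $K_{Z/Y}$ is pseudo-effective on the general fibre of $h$ by non-uniruledness and \cite{BDPP13}; hence $K_{Z/Y}\equiv_{Y}0$ and $h$ is a fibration in klt Calabi--Yau pairs. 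For this Calabi--Yau part I would invoke the Beauville--Bogomolov type decomposition in the klt setting together with the numerical flatness---forced by $K_{Z/Y}\equiv_{Y}0$ and the nefness hypothesis---of the Hodge-theoretic direct images $h_*\omega^{[m]}_{Z/Y}$ and of the graded pieces of the variation of Hodge structure $R^{\bullet}h_*\CC$; these being flat, the period map is constant on $Y^{\textup{univ}}$, so (here the smoothness of $Y$ enters, e.g.\ for the Hodge-theoretic and Shafarevich technology) $h\colon Z\to Y$ is locally constant.

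For the rationally connected part I would replace $X$ by a modification making $g$ a morphism, run a relative MMP for $(X,\Delta)$ over $Z$ to reach a relative log Fano fibration $g'\colon X'\to Z$ with $-(K_{X'/Z}+\Delta')$ relatively ample, and set $\mathcal{E}_m:=g'_*\scrO_{X'}(-m(K_{X'/Z}+\Delta'))$, a vector bundle for $m$ sufficiently divisible. Positivity of direct images makes $\mathcal{E}_m$ nef, while a Riemann--Roch computation on $X'\times_Z C$, using that $-(K_{X'/Z}+\Delta')$ is nef, shows $\deg(\mathcal{E}_m|_C)\le 0$ for every curve $C\subset Z$; hence $\mathcal{E}_m$ is numerically flat, so flat, and corresponds to a representation $\pi_1(Z)\to\mathrm{GL}(\mathcal{E}_m|_z)$. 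The relative anticanonical embedding realizes $(X',\Delta')$ inside $\PP(\mathcal{E}_m)$ as a monodromy-invariant subscheme, i.e.\ as the quotient of $Z^{\textup{univ}}\times F'$ by the diagonal $\pi_1(Z)$-action (with $F'$ the rationally connected fibre and $\Delta_{F'}=\Delta'|_{F'}$); thus $g'\colon X'\to Z$ is locally constant, and $\scrO_{\PP(\mathcal{E}_m)}(1)$ restricts to a tautologically $\pi_1(Z)$-linearized very ample bundle on $F'$. Running the relative MMP $X\dashrightarrow X'$ backwards---each step being over $Z$ and compatible with the local-constant structure---shows that $X\to Z$ is itself locally constant.

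It remains to assemble the two pieces. Since $h\colon Z\to Y$ is locally constant, the surjection $\pi_1(Y)\to\pi_1(Z)$ transports the monodromy of $X\to Z$, and combining it with the monodromy of $h$ yields a representation $\pi_1(Y)\to\Aut(F,\Delta_F)$; {\hyperref[l.basechange]{Lemma~\ref*{l.basechange}}}, applied along the universal cover, keeps $K_{X/Y}+\Delta$ under control, so $f\colon X\to Y$ is locally constant, with the asserted $\pi_1(Y)$-linearized ample line bundle $A$ on $F$ obtained from the two linearized bundles produced above. The main obstacle is the Calabi--Yau step: upgrading the mere isotriviality of a family of klt Calabi--Yau pairs (with $K_{X/Y}+\Delta\equiv_{Y}0$ and nef relative anticanonical) to \emph{local constancy} genuinely requires the singular Beauville--Bogomolov decomposition and the numerical flatness of the associated variation of Hodge structure, and carrying the boundary $\Delta$ and the successive base changes through every reduction step is where most of the technical work lies---this is exactly what is done in \cite{CCM19} and \cite[Theorem~4.7]{MW21}.
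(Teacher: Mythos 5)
This theorem is not proved in the present paper: it is stated as a cited result, attributed to \cite{CCM19} and \cite[Theorem~4.7]{MW21}, and the authors use it as a black box as the starting point for Theorems~\ref{mainthm-RC} and \ref{t.mainthm}. There is thus no ``paper's own proof'' to compare your sketch against; what can be said is how your reconstruction relates to the strategy known from those references.

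Your guiding idea---that numerical flatness of suitable direct image bundles is what upgrades local triviality to local constancy, with \cite[Theorem~1.18, Corollary~1.19]{DPS94} and \cite[\S3]{Sim92} supplying ``numerically flat $\Rightarrow$ flat''---is exactly the crux. However, the known proofs of this particular criterion (as opposed to the global structure theorem for $-(K_X+\Delta)$ nef) do \emph{not} route through an intermediate relative MRC decomposition $X\dashrightarrow Z\to Y$. They work directly on $f\colon X\to Y$: pick a relatively ample $A$, form $E_m=f_*\scrO_X\bigl(-m(K_{X/Y}+\Delta)+A\bigr)$ for $m$ sufficiently divisible, show $E_m$ is numerically flat (this is where positivity of direct images \`a la Cao--P\u{a}un--Takayama combines with the nefness of $-(K_{X/Y}+\Delta)$ to control both $E_m$ and $E_m^*$), then embed $X$ into $\PP(E_m)$ and observe the image is a monodromy-invariant family, yielding the representation $\pi_1(Y)\to\Aut(F,\Delta_F)$ and the $\pi_1(Y)$-linearized $A|_F$ in one stroke. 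This is cleaner than your two-step plan because it never needs to distinguish the rationally connected and Calabi--Yau parts of the fibre.

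Your route has a concrete gap you should be aware of: passing from the hypothesis ``$-(K_{X/Y}+\Delta)$ nef'' to the corresponding nefness needed for the intermediate fibration $g\colon X\to Z$ is not automatic. You have $K_{X/Z}=K_{X/Y}-g^*K_{Z/Y}$, and even once you know $K_{Z/Y}\equiv_Y 0$ (relative numerical triviality), this is weaker than global numerical triviality, so the nefness of $-(K_{X/Z}+\Delta)$ does not follow formally. You would have to first establish the global triviality $K_{Z/Y}\equiv 0$ (which is itself most naturally obtained as an output of the direct approach), and likewise you would need to know that $g$ is an honest morphism with a klt pair total space after the relative MMP---but the MMP steps are not obviously compatible with whatever local-constant structure you hope to build in reverse. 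The Calabi--Yau step via VHS flatness and a singular Beauville--Bogomolov decomposition in families is also substantially heavier than what is required; you acknowledge this, but it is worth emphasizing that the references avoid it entirely. In short, your sketch identifies the right engine (numerical flatness of direct images and the DPS/Simpson flatness theorem) but attaches it to an unnecessary chassis, and the attachment has a genuine gap at the $K_{X/Z}$ transfer.
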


%For the reader's convenience, let us briefly recall the proof. Let $H$ be an ample enough line bundle on $X$ such that the following maps
%\[
%\text{Sym}^m H^0(X_y,H|_{X_y}) \rightarrow H^0(X_y, H^m|_{X_y})
%\]
%are  surjective for all $m\in \bbN$ and $y\in Y$ general, where $X_y$ is the fibre of $f$ over $y$. Thanks to \cite{CaoHoering2019}, after replacing $H$ by its multiple, we may assume that the determinant of the direct image sheaf $\det(f_*H)$ is numerically equivalent to $D^r$ for some line bundle $D$ on $Y$, where $r$ is the rank of $f_*\sO_X(H)$. 
%Denote by $\widetilde{H}$ the line bundle $H \otimes (f^*D)^{-1}$. 
%Then it is proved in \cite[Section 3]{CampanaCaoMatsumura2019} that the direct image sheaves $E_m\coloneqq f_* (\widetilde{H}^m)$ and $V_{m,p}\coloneqq f_*(\sO_{X}(-p\Delta)\otimes \widetilde{H}^m)$ are locally free and numerically flat for any $m\in\bbZ$ and any $p\in \bbN$ sufficiently divisible. Denote by $E$ the numerical flat vector bundle $f_*\widetilde{H}$. Let $\pi:\widetilde{Y}\rightarrow Y$ be the universal cover. Then we have a natural isomorphism $\bbP(\pi^*E)\cong \widetilde{Y}\times \bbP^{r-1}$. Then the numerical flatness of $E_m$ and $V_{m,p}$ implies that the defining equations of the fibres of the natural relative embedding 
%\[
%(\widetilde{Y}\times_{Y} X, \widetilde{Y}\times_Y \Delta) \rightarrow \widetilde{Y}\times \bbP^{r-1}
%\]
%is independent of $y\in \widetilde{Y}$ (see for instance \cite[Proposition 2.8]{MatsumuraWang2021}).

The smoothness assumption of $Y$, however, cannot be removed in the theorem above: 
%{\hyperref[t.structureantinef]{Theorem \ref*{t.structureantinef}}}.
\begin{exa}
\label{e.counter-example-for-singlar-base}
Let $Y\subset \PP^{n+1}$ be a two-dimensional projective cone over a rational normal curve $C_n\subseteq \PP^n$. Then $Y$ is a Fano variety with klt singularities. Let $f:X\rightarrow Y$ be the blow-up of $Y$ at the vertex with exceptional divisor $E$. Then $X$ is smooth and $E$ is a smooth rational curve such that 
\[
f^*K_Y=K_X+\frac{1}{n}E.
\]
In particular, the pair $(X,\frac{1}{n}E)$ is klt and $-(K_{X/Y}+\frac{1}{n}E)$ is trivial and hence nef. In general, let $Y$ be a projective variety with klt singularities, but not terminal. Let $f:X\rightarrow Y$ be a terminal modification of $Y$. Let $\Delta$ be the $\QQ$-Weil divisor on $X$ defined as $K_X+\Delta=f^*K_Y$. Then $\Delta$ is effective, $(X,\Delta)$ is klt and the relative  anti-log canonical divisor $-(K_{X/Y}+\Delta)$ is trivial and hence nef. 
\end{exa}

In the following, we first show that if the fundamental group of a subvariety $Z$ of $Y$ is virtually solvable, then $-(K_{X/Y}+\Delta)$ cannot be strictly nef.

\begin{prop}
	\label{p.solvableflatsection}
	Under {\hyperref[a.assumption]{Assumption \ref*{a.assumption}}}, let $\eta:Z\rightarrow Y$ be a morphism from another  $\mathbb{Q}$-Gorenstein normal projective   variety $Z$. 
	If $\pi_1(Z)$ has a periodic point on $F$ via the composed map
	\[
	\bar{\rho}:\pi_1(Z) \xrightarrow{\eta_*} \pi_1(Y) \xrightarrow{\rho} \textup{GL}(V,\CC)
	\]
 then up to replacing $Z$ by some  finite \'etale cover if necessary,  
	there is a lifting $\sigma:Z\rightarrow X$ of $\eta:Z\rightarrow Y$ such that the pull-back $\sigma^*(K_{X/Y}+\Delta)$ is numerically trivial.
Further, if the image of $\pi_1(Z)$ under $\bar{\rho}$ is virtually solvable, then we can get such a periodic point on $F$. 
\end{prop}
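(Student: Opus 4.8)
The plan is to reduce to the flat bundle situation already handled in Lemmas \ref*{lemma_fixed-pt-subbundle}--\ref*{l.fixedpointflatsection} and then transfer the numerical triviality back to $X$ via the base-change Lemma \ref*{l.basechange}. First I would base change the whole locally constant fibration along $\eta:Z\to Y$: by the discussion preceding Lemma \ref*{l.basechange} one obtains a locally constant fibration $f_Z:X_Z\to Z$ with respect to a pair $(X_Z,\Delta_Z)$, induced by the composed representation $\pi_1(Z)\xrightarrow{\eta_*}\pi_1(Y)\to\Aut(F,\Delta_F)$, and a natural morphism $g:X_Z\to X$. Lemma \ref*{l.basechange} gives $g^*(K_{X/Y}+\Delta)\sim_{\QQ}K_{X_Z/Z}+\Delta_Z$, so it suffices to produce a section $\sigma_Z:Z\to X_Z$ of $f_Z$ (after a finite étale cover of $Z$) along which $K_{X_Z/Z}+\Delta_Z$ is numerically trivial, and then compose with $g$.

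Next I would use the periodic point hypothesis. Let $y\in F$ be a point with $\bar\rho(\gamma^m)\cdot y=y$ for all $\gamma\in\pi_1(Z)$ and a fixed $m$; equivalently, $y$ is a fixed point of the finite-index subgroup $H:=\{\gamma^m\}$-generated subgroup, or rather: passing to the finite étale cover $Z'\to Z$ corresponding to the preimage in $\pi_1(Z)$ of a suitable finite-index subgroup of the image that fixes $y$, we may assume $\pi_1(Z)$ itself fixes $y$. Now $y$ determines a hyperplane $V_y\subset V=\Coh^0(F,A_F)$ (the sections vanishing at $y$), which is $\pi_1(Z)$-invariant, i.e.\ a $\pi_1(Z)$-fixed point of $\PP(V^\vee)$ (or one argues directly with the evaluation quotient $V\twoheadrightarrow V/V_y\cong A_F|_y$, which is a one-dimensional $\pi_1(Z)$-representation). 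By Lemma \ref*{lemma_fixed-pt-subbundle}, the flat bundle $E_Z$ on $Z$ attached to $V$ acquires a corank-one flat subbundle, and by Lemma \ref*{l.fixedpointflatsection} this yields a section $\tau:Z\to\PP E_Z$ with $\tau^*\scrO_{\PP E_Z}(1)$ numerically trivial. The point $y$ being fixed means the evaluation map $F\to\PP(V^\vee)$ (which is an embedding since $A_F$ is very ample) sends $y$ to a $\pi_1(Z)$-fixed point, and the quotient $X_Z=(Z^{\mathrm{univ}}\times F)/\pi_1(Z)$ embeds $\pi_1(Z)$-equivariantly into $(Z^{\mathrm{univ}}\times\PP(V^\vee))/\pi_1(Z)=\PP E_Z$; the section $\tau$ then factors through $X_Z$, giving $\sigma_Z:Z\to X_Z$, namely the image of $Z^{\mathrm{univ}}\times\{y\}$.

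It remains to compute $\sigma_Z^*(K_{X_Z/Z}+\Delta_Z)$. Pulling back to the universal cover, $\pi'{}^*(K_{X_Z/Z}+\Delta_Z)=p_2^*(K_F+\Delta_F)$ as in the proof of Lemma \ref*{l.basechange}, and $\sigma_Z$ lifts to $Z^{\mathrm{univ}}\to Z^{\mathrm{univ}}\times F$, $z\mapsto(z,y)$, whose composition with $p_2$ is constant. Hence the pullback to $Z^{\mathrm{univ}}$ is (torsion) trivial; the line bundle $\scrO_F(m(K_F+\Delta_F))|_y$ is a one-dimensional $\pi_1(Z)$-representation, i.e.\ a character, so descends to a numerically trivial line bundle on $Z$ — which is exactly $\sigma_Z^*\scrO_{X_Z}(m(K_{X_Z/Z}+\Delta_Z))$ up to the étale cover. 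Composing $\sigma:=g\circ\sigma_Z:Z\to X$ and using Lemma \ref*{l.basechange} gives that $\sigma^*(K_{X/Y}+\Delta)$ is numerically trivial, as wanted. For the final sentence: if the image $\bar\rho(\pi_1(Z))$ is virtually solvable, then since $A_F$ is $\pi_1(Z)$-linearized and ample, Proposition \ref*{prop-sol-fixed} (applied with $G=\pi_1(Z)$, $F=F$, $L=A_F$) directly produces a periodic point on $F$, so the hypothesis of the first part is automatically met.

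\medskip

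\emph{Main obstacle.} The delicate point is not the numerical triviality computation but making the equivariant identifications precise: one must check that the periodic (resp. fixed, after étale cover) point $y\in F$ really corresponds to a flat corank-one subbundle of $E_Z$ compatibly with the embedding $F\hookrightarrow\PP(V^\vee)$ and with the quotient presentations $X_Z=(Z^{\mathrm{univ}}\times F)/\pi_1(Z)\subset\PP E_Z$, so that the section of Lemma \ref*{l.fixedpointflatsection} genuinely lands in $X_Z$ and not merely in $\PP E_Z$. One also has to be careful that passing from "periodic" to "fixed" costs only a finite étale cover of $Z$ (replacing $\pi_1(Z)$ by the finite-index preimage of the stabilizer of $y$ in $\bar\rho(\pi_1(Z))$, using that this stabilizer has finite index because $y$ is periodic), and that $Z$-being-$\QQ$-Gorenstein is preserved so that Lemma \ref*{l.basechange} applies. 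These are bookkeeping issues rather than conceptual ones.
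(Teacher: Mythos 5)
Your proof is correct, and the two reductions (base change to $Z$ via Lemma~\ref{l.basechange}, and upgrading a periodic point to a fixed point after a finite \'etale cover) coincide with what the paper does. Where you diverge is the key computation that $\sigma^*(K_{X/Y}+\Delta)\equiv 0$. The paper twists by the ample $A_X$: it introduces the flat bundles $E_{m,p}=f_*\scrO_X(p(-m(K_{X/Y}+\Delta)+A_X))$, uses that $-m(K_F+\Delta_F)+A_F$ is ample (so one can embed $F$ into $\PP(V_{m,p})$ equivariantly for $p\gg 1$), and applies Lemma~\ref{lem_lcf-section} to $E_{m,1}$ and $E_{0,1}$ separately to get $\sigma^*(-m(K_{X/Y}+\Delta)+A_X)\equiv 0$ and $\sigma^*A_X\equiv 0$, then subtracts. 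You instead observe directly, by lifting $\sigma$ to $Z^{\mathrm{univ}}\to Z^{\mathrm{univ}}\times F,\ z\mapsto(z,y)$, that $\sigma_Z^*\scrO_{X_Z}(m(K_{X_Z/Z}+\Delta_Z))$ is the flat line bundle attached to the character of $\pi_1(Z)$ on the one-dimensional space $\scrO_F(m(K_F+\Delta_F))|_y$, and flat line bundles are numerically trivial. This is cleaner: it works regardless of the sign or positivity of $K_F+\Delta_F$, so the ``add $A$, subtract $A$'' device (and hence the whole $E_{m,p}$ apparatus for $m\neq 0$) is unnecessary. Your argument is essentially the mechanism behind Lemma~\ref{lem_lcf-section}(b) applied once, directly, to $m(K_{X/Y}+\Delta)$.

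One detail that both you and the paper treat lightly is why a periodic point yields a fixed point of a finite-index subgroup of $\bar\rho(\pi_1(Z))$. You assert the stabilizer has finite index ``because $y$ is periodic''; the honest argument is that the locus $\{g:\ g^m\cdot y=y\}$ in the Zariski closure $\overline{G_0}$ is closed and contains the dense subgroup $G_0$, hence equals $\overline{G_0}$; since the $m$-th power map on the identity component $\overline{G_0}^{\,\circ}$ is dominant, its image contains a nonempty open set $U$, and $U\cdot U^{-1}=\overline{G_0}^{\,\circ}$ forces $\overline{G_0}^{\,\circ}$ to fix $y$, so $\operatorname{Stab}(y)$ has finite index. (Beware that connected algebraic groups over $\CC$ are not $m$-divisible in general -- e.g.\ $\mathrm{SL}_2$ and $m=2$ -- so the shortcut of taking an $m$-th root does not work.) The paper in fact only constructs a genuine $\overline{G_Y}$-fixed point in the solvable case and then uses it in the proof of the first part without comment; your write-up at least flags the issue, which is a modest improvement.
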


Before we prove {\hyperref[p.solvableflatsection]{Proposition  \ref*{p.solvableflatsection}}}, we prepare the following lemma, which is an analogue of {\hyperref[l.fixedpointflatsection]{Lemma \ref*{l.fixedpointflatsection}}}. 

\begin{lemme}
\label{lem_lcf-section}
Under {\hyperref[a.assumption]{Assumption \ref*{a.assumption}}}, we have the following statements. 
\begin{itemize}
\item[\rm(a)] Every $G$-fixed point $z$ (if exists) induces a section $\sigma:Y\to X$ of the locally constant fibration $f: (X,\Delta)\to Y$.
\item[\rm(b)] Let $A_X$ be the quotient bundle of $A_F$ on $X$; that is $p_X^*A_X=\textup{pr}_2^*A_F$ (cf.~{\hyperref[a.assumption]{Assumption \ref*{a.assumption}}}).
Set $E:=f_\ast\scrO_X(A_X)$. 
Then every $G$-fixed point $z$ induces a short exact sequences of flat vector bundles 
\[
0\to E'\to E\to \scrO_Y(\sigma^\ast\!A)\to 0.
\]
In particular, $\sigma^*A_X\equiv 0$. 
\end{itemize}
\end{lemme}
\begin{proof}
Let $i:X\hookrightarrow\PP E$ be the embedding over $Y$ induced by $A_X$, with $\scrO_X(A_X)\simeq i^\ast\scrO_{\PP E}(1)$. 
For each fibre $F$, we get a $G$-equivariant embedding $i_F: F\hookrightarrow\PP(\Coh^0(F,\scrO_F(A_F)))$. 
By {\hyperref[l.fixedpointflatsection]{Lemma \ref*{l.fixedpointflatsection}}}, the $G$-fixed point $z\in F\subseteq\PP(\Coh^0(F, \scrO_F(A_F)))$ induces a subbundle $E'$ of $E$ so that $Q:=E/E'$ is a flat line bundle on $Y$ and the surjection $E\twoheadrightarrow Q$ corresponds to a section $\sigma_1: Y\to\PP E$ of the projective bundle $\PP E\to Y$  with $\sigma_1^\ast\scrO_{\PP E}(1)=Q\equiv 0$. 
But for every $y\in Y$, $\sigma_1(y)$ corresponds to $z\in F$ under the (canonical) identification $X_y\simeq F$. Hence, the image $\sigma_1(Y)$ is contained in the image of $X$ under the canonical embedding. 
%Since the image $\simga_1(Y)$ is just the image of $Y^{\text{univ}}\times \{z\}\subseteq$ under the natural morphism  $Y^{\text{univ}}\times\mathbb{P}(\Coh^0(F,\scrO_F(A_F)))\to\mathbb{P}(E)$ (cf.~).
Denote by $\sigma$ the induced morphism $Y\to X$  and we then have
\[
0\equiv Q\simeq\sigma_1^\ast\scrO_{\PP E}(1)\simeq\sigma^\ast\scrO_X(A)=\scrO_Y(\sigma^\ast A),
\]
noting that $\sigma_1=i\circ\sigma$.
%The second assertion of (b) follows from the fact that every numerically flat line bundle is numerically trivial.
\end{proof}

\begin{proof}[Proof of {\hyperref[p.solvableflatsection]{Proposition  \ref*{p.solvableflatsection}}}]
	Let $X'\rightarrow Z$ be the locally constant fibration induced by $f$ (cf.~the arguments before {\hyperref[l.basechange]{Lemma \ref*{l.basechange}}}). 
	By {\hyperref[l.basechange]{Lemma \ref*{l.basechange}}}, after replacing $Y$ by $Z$ and $X$ by $X'$, we may assume that $Z=Y$. 
	Denote by $\overline{G_Y}$ the Zariski closure of the image $G_Y:=\rho(\pi_1(Y))$ in $\text{GL}(V,\CC)$. 

First, we prove the second part of this proposition. 
Suppose that $G_Y$ is virtually solvable.
Then its closure $\overline{G_Y}$ is a virtually solvable linear algebraic group, which has finitely many components, and both $F$ and $\Delta_F$ are invariant under the induced action $\overline{G_Y}$ on $\PP(V)$. 
	In particular, after replacing $Y$ by an appropriate finite \'etale cover, we may assume that $\overline{G_Y}$ itself is connected and solvable. 
Thus, by Borel's fixed point theorem (cf.~\cite[Theorem 10.4, p.~137]{Bor69}), there exists a $\overline{G_Y}$-fixed-point $y\in F\subseteq  \PP(V)$ (cf.~{\hyperref[prop-sol-fixed]{Proposition  \ref*{prop-sol-fixed}}}), which completes the second part of our proof.
	
Now we prove the first part of this proposition. Let $m\in \NN$ be a sufficiently divisible positive integer. Then $-m(K_F+\Delta_F)+A_F$ is ample as $-(K_{F}+\Delta_F)=-(K_{X/Y}+\Delta)|_F$ is nef (cf.~{\hyperref[a.assumption]{Assumption \ref*{a.assumption}}}). Since both $K_{F}+\Delta_F$ and $A_F$ are $\overline{G_Y}$-linearized, for any positive integer $p$, the induced action of $\overline{G_Y}$ on the vector space
\[
V_{m,p}\coloneqq \Coh^0(F,\scrO_F(p(-m(K_F+\Delta_F)+A_F))
\]
induces a flat vector bundle structure on the direct image sheaf 
\[
E_{m,p}\coloneqq f_\ast\scrO_X(p(-m(K_{X/Y}+\Delta)+A_X)).
\]
Recall that $A_X$ is the quotient line bundle of $A_F$ on $X$, that is, we have $p_X^*A_X=\textup{pr}_2^*A_F$, where $p_X$ is the natural morphism and $p_2:Y^{\textup{univ}}\times F\rightarrow F$ is the second projection (cf.~{\hyperref[a.assumption]{Assumption \ref*{a.assumption}}}). 	
Moreover, by the ampleness of $-m(K_F+\Delta_F))+A_F$, for sufficiently large $p\gg 1$ there exists an embedding $F\hookrightarrow \PP(V_{m,p})$ such that the restriction of the action $\overline{G_Y}$ on $\PP(V_{m,p})$ to $F$ coincides with the restriction of the action $\overline{G_Y}$ on $\PP(V)$ to $F$. In particular, the point $y\in F\subseteq \PP(V_{m,p})$ is a $\overline{G_Y}$-fixed point for the action of $\overline{G_Y}$ on the projective space $\PP(V_{m,p})$. Denote by $\sigma:Y\rightarrow X$ the section induced by $y$. Applying  {\hyperref[lem_lcf-section]{Lemma \ref*{lem_lcf-section}}} to $E_{m,1}$ $(m\not=0)$ and $E_{0,1}$, we see that the pull-backs
\begin{center}
$\sigma^*(-m(K_{X/Y}+\Delta)+A_X)$ \quad \quad and \quad \quad $\sigma^*A_X$
\end{center}
are both numerically trivial. As a consequence, the pull-back $\sigma^*(K_{X/Y}+\Delta)$ is numerically trivial.
\end{proof}

As a consequence of {\hyperref[p.solvableflatsection]{Proposition  \ref*{p.solvableflatsection}}}, 
we are able to prove our first main result {\hyperref[mainthm-RC]{Theorem \ref*{mainthm-RC}}}. 
After that, we slightly generalize {\hyperref[mainthm-RC]{Theorem \ref*{mainthm-RC}}}  to the case when the anti-log canonical divisor is almost strictly nef (cf.~{\hyperref[prop-aug-irrg-bc]{Proposition \ref*{prop-aug-irrg-bc}}}). 
\begin{rmq}
In the spirit of \cite[Section 4]{LOY19}, the key step to show {\hyperref[mainthm-RC]{Theorem \ref*{mainthm-RC}}} is to find a $(K_X+\Delta)$-trivial section of the  maximal rationally connected fibration (MRC fibration for short), up to replacing $X$ by a quasi-\'etale cover, which is {\hyperref[p.solvableflatsection]{Proposition  \ref*{p.solvableflatsection}}}. 
In general, the MRC fibration for a projective variety may not be holomorphic.
However, by a recent joint work of Matsumura and the third author, it has been shown that, if $(X,\Delta)$ is a projective klt pair with the anti-log canonical divisor $-(K_X+\Delta)$ being nef, then up to replacing $X$ by its quasi-\'etale cover, there is a locally constant (holomorphic) fibration $f:X\to Y$ with respect to the pair $(X,\Delta)$ such that $K_Y\equiv 0$ (\cite[Theorem 1.1]{MW21}; cf.~\cite{Cao19,CH19,CCM19,Wang20}). 
\end{rmq}

\begin{proof}[Proof of {\hyperref[mainthm-RC]{Theorem \ref*{mainthm-RC}}}]
First, we note that $X$ is uniruled (cf.~{\hyperref[prop_strict_uniruled]{Proposition \ref*{prop_strict_uniruled}}}). 
After replacing $(X,\Delta)$ by a quasi-\'etale cover $(X',\Delta')$, our pair $(X',\Delta')$ is still klt with $-(K_{X'}+\Delta')$ being strictly nef (cf.~\cite[Proposition 5.20]{KM98}).
Furthermore, if $X'$ is rationally connected, then so is $X$.
Therefore, we are free to replace $X$ by its quasi-\'etale cover.

By \cite[Theorem 1.1]{MW21}, with $(X,\Delta)$ replaced by a quasi-\'etale cover, the maximal rationally connected (MRC for short) fibration $f:(X,\Delta)\to Y$ of $X$ is a (holomorphic) locally constant fibration with respect to the pair $(X,\Delta)$ such that $Y$ has only klt singularities and the canonical divisor $K_Y\equiv 0$. 
Then by taking a desingularization of $Y$ and considering the base change of $f$, from {\hyperref[t.structureantinef]{Theorem \ref*{t.structureantinef}}} and \cite[Theorem 1.1]{Tak03} we see that there is a $G$-linearized ample line bundle over $F$, and hence all the assumptions in {\hyperref[a.assumption]{Assumption \ref*{a.assumption}}} are satisfied.  

%Besides, since $(X,\Delta)$ is a klt pair,  so is $(F,\Delta_F)$. Therefore, $F$ has rational singularities and hence $q(F)=0$ by considering a desingularization (which is still rationally connected); see \cite[Theorem 5.22]{KM98}. Then it follows from \cite[Proof of Lemma 2.7]{MW21} that  there exists an ample  divisor $A_F$ on $F$ which is $G$-linearized; in particular, all the assumptions in {\hyperref[a.assumption]{Assumption \ref*{a.assumption}}} are satisfied. Moreover, by {\hyperref[cor-lc-periodic]{Corollary  \ref*{cor-lc-periodic}}}, the fundamental group $G=\pi_1(Y)$ has a periodic point on $F$. 

Suppose the contrary that $X$ is not rationally connected.
Then $\dim Y>0$.
By {\hyperref[p.solvableflatsection]{Proposition  \ref*{p.solvableflatsection}}}, up to replacing $X$ by a further \'etale cover, our $f$ admits a section $\sigma:Y\to X$ such that $\sigma^*(K_X+\Delta)\equiv 0$. 
We pick a curve $C$ in $Y$. 
Then $\sigma_\ast C\neq 0$. 
Since $\sigma^\ast(K_X+\Delta)\equiv0$, it follows from the projection formula that
\[
-\sigma^\ast(K_X+\Delta)\cdot C=-(K_X+\Delta)\cdot\sigma_\ast C=0,
\]
which contradicts the strict nefness of $-(K_X+\Delta)$. 
Hence, $X$ is rationally connected and the first part of {\hyperref[mainthm-RC]{Theorem \ref*{mainthm-RC}}} follows. 

Note that a rationally connected klt projective variety has the vanishing irregularity (cf.~\cite[Corollary 4.18]{Deb01} and \cite[Theorem 5.22]{KM98}). Thus, to show the augmented irregularity $q^\circ(X)$ vanishing, we only need to show every quasi-\'etale cover is rationally connected.
However, this follows from the first part of {\hyperref[mainthm-RC]{Theorem \ref*{mainthm-RC}}} and arguments in the beginning of our proof.
\end{proof}

In what follows, we slightly extends our  \hyperref[mainthm-RC]{Theorem \ref*{mainthm-RC}}   
to the following \hyperref[prop-aug-irrg-bc]{Proposition \ref*{prop-aug-irrg-bc}}  on the anti-log canonical divisor being almost strictly nef (cf.~\hyperref[defn-almost-sn]{Definition \ref*{defn-almost-sn}}).

%It is conjectured that if $L_X$ is almost strictly nef on a smooth projective variety $X$, then $K_X+tL_X$ is big for sufficiently large $t\gg 1$ (cf.~\cite[Conjecture 2.2]{CCP08} and \cite[Theorem 26]{Cha20}). 
%When $L_X$ is the ant-log canonical divisor, we now describe the geometric property of such $X$ and give a partial answer to this conjecture  (cf.~\hyperref[cor-3fold-almost-big]{Corollary \ref*{cor-3fold-almost-big}}).

\begin{prop}\label{prop-aug-irrg-bc}
Let $(X,\Delta)$ be a projective klt pair with $-(K_X+\Delta)$ being almost strictly nef.
Then, any quasi-\'etale cover of $X$ is rationally connected.
In particular, the augmented irregularity $q^\circ(X)=0$.
\end{prop}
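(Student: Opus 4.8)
The plan is to reduce {\hyperref[prop-aug-irrg-bc]{Proposition \ref*{prop-aug-irrg-bc}}} to {\hyperref[mainthm-RC]{Theorem \ref*{mainthm-RC}}} by descending along the birational contraction that witnesses almost strict nefness, after first pulling everything back along an arbitrary quasi-\'etale cover. Write $-(K_X+\Delta)\sim_{\QQ}\pi^\ast L_Y$ with $\pi:X\to Y$ a birational morphism to a normal projective variety $Y$ and $L_Y$ a strictly nef $\QQ$-divisor on $Y$ (cf.~{\hyperref[defn-almost-sn]{Definition \ref*{defn-almost-sn}}}). Fix an arbitrary quasi-\'etale cover $g:X'\to X$ (the identity being allowed), which is automatically normal, and put $\Delta':=g^\ast\Delta$, so that $K_{X'}+\Delta'=g^\ast(K_X+\Delta)$ and $(X',\Delta')$ is klt by \cite[Proposition 5.20]{KM98}. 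Then $-(K_{X'}+\Delta')\sim_{\QQ}(\pi\circ g)^\ast L_Y$.

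Next I would take the Stein factorisation $X'\xrightarrow{\pi'}Y'\xrightarrow{\nu}Y$ of $\pi\circ g$, where $Y'$ is a normal projective variety, $\pi'$ is a fibration and $\nu$ is finite; since $\pi\circ g$ is generically finite (being the composite of the birational $\pi$ with the finite $g$), the fibration $\pi'$ is in fact birational. Set $\Delta_{Y'}:=\pi'_\ast\Delta'\geqslant 0$. Pushing the $\QQ$-linear equivalence $-(K_{X'}+\Delta')\sim_{\QQ}(\pi')^\ast\nu^\ast L_Y$ forward by $\pi'_\ast$, and using that $\pi'_\ast(\pi')^\ast$ is the identity on $\QQ$-Cartier divisor classes, yields $K_{Y'}+\Delta_{Y'}\sim_{\QQ}-\nu^\ast L_Y$; in particular $K_{Y'}+\Delta_{Y'}$ is $\QQ$-Cartier, and pulling this identity back shows $K_{X'}+\Delta'\sim_{\QQ}(\pi')^\ast(K_{Y'}+\Delta_{Y'})$, i.e., $\pi'$ is crepant for these pairs. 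Since $(X',\Delta')$ is klt, it follows that $(Y',\Delta_{Y'})$ is klt. Moreover $-(K_{Y'}+\Delta_{Y'})\sim_{\QQ}\nu^\ast L_Y$ is strictly nef: for any curve $C\subset Y'$ the projection formula gives $\nu^\ast L_Y\cdot C=L_Y\cdot\nu_\ast C>0$, since $\nu_\ast C$ is a nonzero effective $1$-cycle on $Y$ and $L_Y$ is strictly nef.

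Now {\hyperref[mainthm-RC]{Theorem \ref*{mainthm-RC}}} applies to the klt pair $(Y',\Delta_{Y'})$ and gives that $Y'$ is rationally connected; since $\pi':X'\to Y'$ is birational and rational connectedness is a birational invariant, $X'$ is rationally connected. As $g$ was an arbitrary quasi-\'etale cover, every quasi-\'etale cover of $X$ is rationally connected, and since a rationally connected klt projective variety has vanishing irregularity (cf.~\cite[Corollary 4.18]{Deb01}, \cite[Theorem 5.22]{KM98}), we conclude that $q^\circ(X)=0$. The one genuinely delicate point is the descent of the pair structure along $\pi'$---namely that $K_{Y'}+\Delta_{Y'}$ is $\QQ$-Cartier and that $(Y',\Delta_{Y'})$ is still klt; both follow from the crepancy relation $K_{X'}+\Delta'\sim_{\QQ}(\pi')^\ast(K_{Y'}+\Delta_{Y'})$ together with the standard fact that klt singularities are preserved under crepant birational contractions, the $\QQ$-Cartier property being automatic from $K_{Y'}+\Delta_{Y'}\sim_{\QQ}-\nu^\ast L_Y$. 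Everything else---preservation of klt under quasi-\'etale covers, preservation of strict nefness under finite pullback, and birationality of $\pi'$---is routine.
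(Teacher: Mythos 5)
Your proposal is correct and follows essentially the same route as the paper's proof: pull back along the quasi-étale cover, take the Stein factorisation of the composite with the birational contraction witnessing almost strict nefness, descend the pair crepantly to the finite cover of the base, and invoke {\hyperref[mainthm-RC]{Theorem \ref*{mainthm-RC}}}. The only (inessential) difference is one of bookkeeping: the paper first rewrites $-(K_X+\Delta)=\pi^*(-(K_Z+\Delta_Z))$ so that the strictly nef divisor is itself an anti-log-canonical divisor of a klt pair on the base, and then defines $K_{Z'}+\Delta_{Z'}:=\tau_Z^*(K_Z+\Delta_Z)=\pi'_*(K_{X'}+\Delta')$ via the projection formula, whereas you keep the strictly nef divisor as $L_Y$ and define $\Delta_{Y'}:=\pi'_*\Delta'$ directly; these give the same divisor, and you are somewhat more explicit than the paper in checking that $K_{Y'}+\Delta_{Y'}$ is $\QQ$-Cartier, that $\pi'$ is crepant, and that klt descends.
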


\begin{proof} 
Let $\pi:(X,\Delta)\to (Z,\Delta_Z)$ be the projective birational morphism such that $K_X+\Delta=\pi^*(K_Z+\Delta_Z)$ and $-(K_Z+\Delta_Z)$ is strictly nef.
Clearly, $(Z,\Delta_Z)$ is also a projective klt pair.
Let $(X',\Delta':=\tau^*\Delta)\xrightarrow{\tau}(X,\Delta)$  be any quasi-\'etale cover. 
Then $(X',\Delta')$ is a projective klt pair (cf.~\cite[Proposition 5.20]{KM98}) and $-(K_{X'}+\Delta')$ is nef.
Taking the Stein factorization of $\pi\circ\tau$, we get the following commutative diagram
\[\xymatrix{
(X',\Delta')\ar[r]^{\pi'}\ar[d]_\tau&(Z',\Delta_{Z'})\ar[d]^{\tau_Z}\\
(X,\Delta)\ar[r]_\pi&(Z,\Delta_Z)
}
\]
where $\tau_Z$ is a finite morphism and $\pi'$ is birational.
By the projection formula, we have $$K_{Z'}+\Delta_{Z'}:=\tau_Z^*(K_Z+\Delta_Z)=\pi'_*(K_{X'}+\Delta').$$
Then $(Z',\Delta_{Z'})$ is also a projective klt pair with $-(K_{Z'}+\Delta_{Z'})$ being strictly nef.
Applying \hyperref[mainthm-RC]{Theorem \ref*{mainthm-RC}} to the pair $(Z',\Delta_{Z'})$, we see that $Z'$ and hence $X'$ are rationally connected, which completes the proof of the proposition . 
\end{proof}

As a  consequence of
{\hyperref[prop-aug-irrg-bc]{Proposition \ref*{prop-aug-irrg-bc}}}
 and \cite[Proposition 7.8]{GKP16b}, we end up the first part of this section with  the following result on the finiteness of the algebraic fundamental group of the smooth locus when  $-(K_X+\Delta)$ is almost strictly nef.

\begin{prop}\label{lem-finite-reg-algpi1}
Let $(X,\Delta)$ be a projective klt pair with $-(K_X+\Delta)$ being almost strictly nef.
Then the algebraic fundamental group $\pi_1^{\textup{alg}}(X_{\textup{reg}})$ of the smooth locus $X_{\textup{reg}}\subseteq X$ is finite.
\end{prop}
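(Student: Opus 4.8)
The plan is to reduce to the statement of \cite[Proposition 7.8]{GKP16b}, which asserts that for a projective klt pair $(X,\Delta)$ whose quasi-\'etale covers all have vanishing augmented irregularity and are, in an appropriate sense, rationally connected, the algebraic fundamental group $\pi_1^{\textup{alg}}(X_{\textup{reg}})$ of the smooth locus is finite. The key input we already have is {\hyperref[prop-aug-irrg-bc]{Proposition \ref*{prop-aug-irrg-bc}}}: every quasi-\'etale cover of $X$ is rationally connected, and in particular $q^\circ(X)=0$. So the first and main step is simply to check that the hypotheses of \cite[Proposition 7.8]{GKP16b} are met. Concretely, that result applies to klt varieties (or pairs) all of whose quasi-\'etale covers are rationally connected; the rational connectedness of every such cover is exactly what {\hyperref[prop-aug-irrg-bc]{Proposition \ref*{prop-aug-irrg-bc}}} furnishes.

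The argument then runs as follows. First I would recall that, given a finite-index subgroup $\widehat{H}\leqslant\pi_1^{\textup{alg}}(X_{\textup{reg}})$, one produces a quasi-\'etale cover $\tau:X'\to X$ (\'etale over $X_{\textup{reg}}$) with $\pi_1^{\textup{alg}}(X'_{\textup{reg}})$ mapping onto $\widehat{H}$; this is the standard dictionary between quasi-\'etale covers and finite-index subgroups of the \'etale fundamental group of the smooth locus, using that $X$ being klt has only quotient singularities in codimension two and Grothendieck's purity/Zariski–Nagata type results as in \cite{GKP16b}. Pulling back $\Delta$ to $\Delta':=\tau^*\Delta$, the pair $(X',\Delta')$ is again klt with $-(K_{X'}+\Delta')$ almost strictly nef (pull-back of an almost strictly nef divisor along a finite morphism is almost strictly nef, via the induced birational contraction on the Stein factorization, exactly as in the proof of {\hyperref[prop-aug-irrg-bc]{Proposition \ref*{prop-aug-irrg-bc}}}). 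Hence {\hyperref[prop-aug-irrg-bc]{Proposition \ref*{prop-aug-irrg-bc}}} applies to $(X',\Delta')$ too, so $X'$ is rationally connected and $q^\circ(X')=0$. The boundedness mechanism of \cite[Proposition 7.8]{GKP16b} — which bounds the degree of such covers uniformly, e.g. via the boundedness of rationally connected klt varieties of fixed dimension with bounded singularities, or via a Noetherianity/étale-site argument — then forces $\pi_1^{\textup{alg}}(X_{\textup{reg}})$ to be finite.

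I expect the only genuine subtlety to be the bookkeeping needed to invoke \cite[Proposition 7.8]{GKP16b} in the \emph{pair} setting rather than for a klt variety with empty boundary: one must check that the relevant covers remain klt after pulling back $\Delta$ (true by \cite[Proposition 5.20]{KM98}) and that the hypothesis of \cite{GKP16b} — "all quasi-\'etale covers are rationally connected" — is stated or can be deduced in a form compatible with a nonzero $\Delta$. Since rational connectedness is a property of the underlying variety $X'$, not of the pair, and since {\hyperref[prop-aug-irrg-bc]{Proposition \ref*{prop-aug-irrg-bc}}} already delivers this for \emph{every} quasi-\'etale cover, this obstacle is essentially formal. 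Thus the proof is short: cite {\hyperref[prop-aug-irrg-bc]{Proposition \ref*{prop-aug-irrg-bc}}} for the rational connectedness of all quasi-\'etale covers, and conclude by \cite[Proposition 7.8]{GKP16b}.
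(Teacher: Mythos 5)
Your strategy coincides with the paper's: combine {\hyperref[prop-aug-irrg-bc]{Proposition \ref*{prop-aug-irrg-bc}}} with \cite[Proposition 7.8]{GKP16b}. However, you misstate both the hypothesis and the mechanism of the cited result, and this leaves a genuine gap.

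The actual hypothesis of \cite[Proposition 7.8]{GKP16b} is \emph{not} ``all quasi-\'etale covers are rationally connected.'' It is that one works with a class $\mathcal{R}$ of klt-type normal projective varieties that is (a) stable under quasi-\'etale covers and (b) such that every member has \emph{finite \'etale fundamental group} $\hat\pi_1(X)$. Rational connectedness is not mentioned in that statement. So after invoking {\hyperref[prop-aug-irrg-bc]{Proposition \ref*{prop-aug-irrg-bc}}} to get rational connectedness of every quasi-\'etale cover, you still owe the reader the step
\[
X \text{ rationally connected klt} \ \Longrightarrow\ \hat\pi_1(X) \text{ finite},
\]
which the paper handles explicitly via \cite{Tak03} (a resolution of a klt $X$ has the same fundamental group, and a smooth rationally connected projective variety is simply connected, so $\pi_1(X)$ and hence its profinite completion $\hat\pi_1(X)$ are finite). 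Without this step you have not verified the hypothesis of \cite[Proposition 7.8]{GKP16b}, so the conclusion does not follow as written.

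Separately, your description of the ``boundedness mechanism'' of \cite[Proposition 7.8]{GKP16b} is off. That proposition does not bound degrees of covers by appealing to boundedness of rationally connected klt varieties of fixed dimension; that technology is unrelated. Its proof rests on the main theorem of \cite{GKP16b}, which produces a single quasi-\'etale cover $\widetilde X \to X$ for which $\hat\pi_1(\widetilde X_{\mathrm{reg}}) \to \hat\pi_1(\widetilde X)$ is an isomorphism (a Noetherian descent/jump argument on quasi-\'etale covers). Since $\widetilde X$ again lies in $\mathcal{R}$ and therefore has finite $\hat\pi_1(\widetilde X)$, one obtains finiteness of $\hat\pi_1(\widetilde X_{\mathrm{reg}})$ and then of $\hat\pi_1(X_{\mathrm{reg}})$ by Galois theory. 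Your initial reduction to a finite-index subgroup $\widehat H \leqslant \pi_1^{\mathrm{alg}}(X_{\mathrm{reg}})$ is superfluous bookkeeping once \cite[Proposition 7.8]{GKP16b} is invoked as a black box, and the paper does not need it. In short: keep the skeleton of your argument, but replace the ``rationally connected'' reading of the hypothesis with ``finite $\hat\pi_1$,'' insert the Takayama step, and drop the speculative boundedness discussion.
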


\begin{proof}
Denote by $\mathcal{R}$ the set of  normal projective varieties $X$ such that there exists a $\mathbb{Q}$-Weil divisor $\Delta$ such that $(X,\Delta)$ is klt and the anti-log canonical divisor $-(K_X+\Delta)$ is almost strictly nef.
Then it follows from {\hyperref[prop-aug-irrg-bc]{Proposition \ref*{prop-aug-irrg-bc}}} and its proof that any quasi-\'etale Galois cover $Y$ of $X$ still lies in $\mathcal{R}$.
Moreover, since $X$ is rationally connected by {\hyperref[prop-aug-irrg-bc]{Proposition \ref*{prop-aug-irrg-bc}}},
 and the algebraic fundamental group is a profinite completion of the topological fundamental group, we see that $\pi_1^\textup{alg}(X)$ is finite  (cf.~\cite{Tak03}).
In particular, the conditions of \cite[Proposition 7.8]{GKP16b} are verified and thus $\pi_1^{\textup{alg}}(X_{\textup{reg}})$ is finite.
\end{proof}

\vskip 1\baselineskip

In the second part of this section, we aim to show {\hyperref[t.mainthm]{Theorem \ref*{t.mainthm}}} and {\hyperref[c.algebraicallyintegrable]{Corollary \ref*{c.algebraicallyintegrable}}}.
In the view of {\hyperref[t.structureantinef]{Theorem \ref*{t.structureantinef}}}, 
we already know that such $f$ (in {\hyperref[t.mainthm]{Theorem \ref*{t.mainthm}}}) is a locally constant fibration.  
Thus, to prove {\hyperref[t.mainthm]{Theorem \ref*{t.mainthm}}}, it remains to show that the fibres of $f$ are rationally connected and $Y$ is a canonically polarized hyperbolic manifold. 
Indeed, we shall prove that every irreducible subvariety $Z$ of $Y$ is of general type in the sense that every resolution $Z'\rightarrow Z$ is of general type.  

Now, we want to apply {\hyperref[p.solvableflatsection]{Proposition  \ref*{p.solvableflatsection}}} to the situation of {\hyperref[t.mainthm]{Theorem \ref*{t.mainthm}}}.  
First, the following lemma is an application of \cite[Theorem 1.1]{Yam10}, which reveals the relationships between fundamental groups and degeneracy of entire curves.

\begin{lemme}[{\cite[Lemma 9.1]{LOY20}}]
	\label{l.degenerencyentirecurve}
	Let $Y$ be an irreducible projective variety. If there exists a representation $\rho:\pi_1(Y)\rightarrow \GL(r,\CC)$ such that its image $\Image(\rho)$ is not virtually solvable, then every holomorphic map $f:\CC\rightarrow Y$ is degenerate, i.e., $f(\CC)$ is not Zariski dense in $Y$.
\end{lemme}

The following theorem will be used in the proof of {\hyperref[t.mainthm]{Theorem \ref*{t.mainthm}}}. 
Before giving the statement, let us introduce some necessary notion. Let $L$ be a nef $\QQ$-Cartier $\QQ$-Weil divisor on a normal projective variety. We define $\textbf{NT}(L)$ to be the Zariski closure of the union of curves $C$ in $X$ with $L$-degree $0$, i.e., $L\cdot C=0$.

\begin{thm}
	\label{t.generaltype}
	Under {\hyperref[a.assumption]{Assumption \ref*{a.assumption}}}, if the subvariety $T:=\textrm{\bf NT}(-(K_{X/Y}+\Delta))$ does not dominate $Y$ and $-(K_{X/Y}+\Delta)$ is strictly nef, then $Y$ is of general type and there exists a linear representation $\chi:\pi_1(Y)\rightarrow \GL(r,\CC)$ whose image is not virtually solvable.
\end{thm}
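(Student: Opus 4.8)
The plan is to argue by contradiction on the two conclusions simultaneously, using the dichotomy provided by Proposition \ref{p.solvableflatsection} together with the Shafarevich machinery of Section \ref{sec-period}. First I would suppose that no linear representation $\chi:\pi_1(Y)\to\GL(r,\CC)$ has non-virtually-solvable image; equivalently, \emph{every} linear quotient of $\pi_1(Y)$ is virtually solvable. In particular this applies to the representation $\rho:\pi_1(Y)\to\GL(V,\CC)$ of Assumption \ref{a.assumption}. I would then restrict $\rho$ along $\eta_\ast:\pi_1(Z)\to\pi_1(Y)$ for $Z\subseteq Y$ a subvariety through a very general point, getting $\bar\rho:\pi_1(Z)\to\GL(V,\CC)$ with virtually solvable image; by the second (``Further, \dots'') part of Proposition \ref{p.solvableflatsection}, $\pi_1(Z)$ then has a periodic point on $F$, so after a finite \'etale cover of $Z$ there is a lifting $\sigma:Z\to X$ with $\sigma^\ast(K_{X/Y}+\Delta)\equiv 0$. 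Taking $Z$ to be a curve $C$ not contained in the non-dominating subvariety $T=\mathbf{NT}(-(K_{X/Y}+\Delta))$ — such a curve exists precisely because $T\not\to Y$ — the projection formula gives $-(K_{X/Y}+\Delta)\cdot\sigma_\ast C = -\sigma^\ast(K_{X/Y}+\Delta)\cdot C=0$ with $\sigma_\ast C\neq 0$, contradicting strict nefness of $-(K_{X/Y}+\Delta)$. This produces the required $\chi$.

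Next I would deduce that $Y$ is of general type. With $\chi$ in hand, I would run the Shafarevich construction set up just before Theorem \ref{t.basemfdShafarevichmap}: pass to a finite \'etale cover $Y'\to Y$ making $\overline{G_{Y'}}$ connected, form $K\lhd\pi_1(Y')$ the kernel of $\pi_1(Y')\to\overline{G_{Y'}}/\mathrm{Rad}(\overline{G_{Y'}})$, and consider the $K$-Shafarevich map $\mathrm{sh}_{Y'}^K:Y'\dashrightarrow\mathrm{Sh}^K(Y')$. Since $Y$ (hence $Y'$) is smooth — this is already known from Theorem \ref{t.structureantinef} in the setting of Theorem \ref{t.mainthm}, but I only need klt, which holds — Theorem \ref{t.basemfdShafarevichmap} shows $\mathrm{Sh}^K(Y')$ is of general type. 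The point is then that $\mathrm{sh}_{Y'}^K$ is generically finite, i.e.\ $\dim\mathrm{Sh}^K(Y')=\dim Y'$: a positive-dimensional general fibre $W$ of $\mathrm{sh}_{Y'}^K$ would be a subvariety through a very general point on which $\chi|_{\pi_1(W)}$ has finite (hence virtually solvable) image, so by the argument of the previous paragraph applied with $Z=W$ one again finds a $(K_{X/Y}+\Delta)$-trivial section over $W$, and then a $(K_{X/Y}+\Delta)$-trivial curve inside it avoiding $T$ (possible since $T$ does not dominate $Y$, so $T\cap W\subsetneq W$ for general $W$), contradicting strict nefness. Hence $\mathrm{sh}_{Y'}^K$ is birational, $Y'$ is of general type, and since $Y'\to Y$ is finite \'etale, $Y$ is of general type as well.

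The main obstacle I anticipate is the bookkeeping around ``very general point'' versus ``general curve/subvariety avoiding $T$'': the Shafarevich map only controls subvarieties through very general points, while the strict-nefness contradiction needs a curve off the countable-union locus $D_i$ \emph{and} off $T$. I would handle this by choosing the curve $C$ (resp.\ intersecting $W$ with a general complete-intersection curve) to pass through a very general point of $Y'$, which is automatically outside $\bigcup D_i$ by Proposition \ref{prop-kol-sha}(3), and outside $T$ because $T$ is a proper closed subset of $Y$; a Bertini-type argument keeps it irreducible and lets its normalization's $\pi_1$ surject appropriately. A secondary technical point is ensuring the base changes of Proposition \ref{p.solvableflatsection} and Lemma \ref{l.basechange} apply to $Z$ (or $W$, or $C$) — this requires $Z$ to be $\QQ$-Gorenstein, which one arranges by replacing $Z$ by a resolution and using that $\pi_1$ is a birational invariant (Tak03), exactly as in the proof of Theorem \ref{t.basemfdShafarevichmap}. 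Once these are pinned down, both conclusions of the theorem follow from the two displayed contradictions above.
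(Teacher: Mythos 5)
Your proposal follows the paper's two-step strategy exactly: first use Proposition~\ref{p.solvableflatsection} together with Lemma~\ref{l.basechange} to rule out virtual solvability of the relevant linear image (by producing a $(K_{X/Y}+\Delta)$-trivial section and contradicting strict nefness), then invoke the Shafarevich machinery of Section~\ref{sec-period} and Theorem~\ref{t.basemfdShafarevichmap} to conclude general type by showing $\mathrm{sh}_{Y'}^K$ is birational. The only differences are organizational (the paper states the solvability obstruction once, as Step~1, for arbitrary positive-dimensional $Z$ through a very general point, and then quotes it; you run the same argument first for a curve $C$ and then again for a Shafarevich fibre $W$), plus two small inaccuracies worth fixing: (i) for a positive-dimensional Shafarevich fibre $W$ the image of $\pi_1(W')$ under $\chi$ is \emph{virtually solvable} (finite modulo $\mathrm{Rad}(\overline{G_{Y'}})$), not finite in $\GL(r,\CC)$ as you wrote; and (ii) $T=\mathbf{NT}(-(K_{X/Y}+\Delta))$ lives in $X$, not $Y$, so the conditions you want are about $f(T)\subsetneq Y$ and about the image curve $(g\circ\sigma)(C')\not\subseteq T$ in $X$ (though under strict nefness $T=\emptyset$ anyway, so this is harmless here). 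Neither of these affects the validity of the overall argument.
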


\begin{proof}
	Let $\chi:\pi_1(Y) \rightarrow \GL(r,\CC)$ be the linear representation given in  {\hyperref[a.assumption]{Assumption \ref*{a.assumption}}}.
	
	\bigskip
	
  \noindent
	\textit{Step 1. Let $Z\subseteq Y$ be an irreducible subvariety of positive dimension passing through a very general point of $Y$, and let $Z'\rightarrow Z$ be an arbitrary resolution. Then the image of the induced composed map 
		\[
		\chi':\pi_1(Z')\rightarrow \pi_1(Y)\xrightarrow{\chi} \textup{GL}(r,\CC)
		\]
    is not virtually solvable.}

    \bigskip
     
    Assume to the contrary that the image of $\chi'$ is virtually solvable. 
    Since $T$ does not dominate $Y$ and $Z$ is very general, we may assume that $Z$ is not contained in the image of $T$ in $Y$. Let us denote by $f':X'\rightarrow Z'$ the locally constant fibration with respect to $(X',\Delta')$ induced by $\rho'$ and $f$. 
    By {\hyperref[l.basechange]{Lemma \ref*{l.basechange}}}, we have
    \[
    g^*(K_{X/Y}+\Delta) = K_{X'/Z'}+\Delta',
    \]
    where $g$ is the induced morphism $X'\rightarrow X$. On the other hand, by {\hyperref[p.solvableflatsection]{Proposition \ref*{p.solvableflatsection}}}, after replacing $Z'$ by a finite \'etale cover if necessary, there exists a section $\sigma:Z'\rightarrow X'$ such that the pull-back $\sigma^*(K_{X'/Z'}+\Delta')$ is numerically trivial. Since $Z'\rightarrow Z$ is birational and $\dim(Z)>0$, there exists an irreducible curve $C'\subseteq Z'$ such that it is birational onto its image $C$ in $Z$ such that $C$ is not contained in the image of $T$. In particular, the induced morphism $C'\rightarrow \sigma(C')\rightarrow (g\circ \sigma)(C')$ is birational and $(g\circ \sigma)(C')$ is not contained in $T$. Then by projection formula we get
    \[
    0<-(K_{X/Y}+\Delta)\cdot (g\circ \sigma)(C') = -(K_{X'/Z'}+\Delta')\cdot \sigma(C)=0,
    \]
    which is a contradiction.
    
    \bigskip
    
    \noindent
    \textit{Step 2. The base manifold $Y$ is of general type.}
    
    \bigskip
    
    Let $Y'\rightarrow Y$ be the finite \'etale cover of $Y$ given in the discussion before {\hyperref[t.basemfdShafarevichmap]{Theorem  \ref*{t.basemfdShafarevichmap}}} with the Shafarevich map 
    $\text{sh}_{Y'}^K:Y'\dashrightarrow \text{Sh}^K(Y')$ corresponding to the representation $\chi$. 
    Let $G_{Y'}$ be the image of $\pi_1(Y')$ under the composite map $\pi_1(Y')\to\pi_1(Y)\xrightarrow{\chi} \text{GL}(r,\mathbb{C})$ with $\overline{G_{Y'}}$ being its Zariski closure. 
    To show that $Y$ is of general type, it is enough to show that $Y'$ is general. 
    In particular, by {\hyperref[t.basemfdShafarevichmap]{Theorem  \ref*{t.basemfdShafarevichmap}}}, it remains to show that the Shafarevich map $\text{sh}_{Y'}^K$ is birational. 
    Let $X'\rightarrow Y'$ be the locally constant fibration induced by $f$. 
    By {\hyperref[l.basechange]{Lemma \ref*{l.basechange}}}, the relative anti-log canonical divisor $-(K_{X'/Y'}+\Delta')$ is nef and it is clear that the closed subvariety $T':=\textbf{NT}(-(K_{X'/Y'}+\Delta'))$ does not dominate $Y'$. Let $Z$ be a very general fibre of $\text{sh}_{Y'}^K$. Then $Z$ is normal as $X'$ is normal. Let $Z'\rightarrow Z$ be a resolution. Then by {\hyperref[defn-shafarevich]{Definition \ref*{defn-shafarevich}}} the image of the following composed map
    \[
    \pi_1(Z')\rightarrow \pi_1(Z)\rightarrow \pi_1(Y') \rightarrow \pi_1(Y)\xrightarrow{\chi} \overline{G_{Y'}} \rightarrow \overline{G_{Y'}}/\text{Rad}(\overline{G_{Y'}})
    \]
    is finite and hence the image of the following composed map
    \[
    \pi_1(Z')\rightarrow \pi_1(Z)\rightarrow \pi_1(Y') \rightarrow \pi_1(Y)\xrightarrow{\chi} \textup{GL}(r,\mathbb{C})
    \]
    is virtually solvable. Here, $\text{Rad}(\overline{G_{Y'}})$ denotes the solvable radical of $\overline{G_{Y'}}$.
    Hence, by Step 1 above, we obtain $\dim(Z)=0$ and it follows that $\text{sh}_{Y'}^K$ is birational.
\end{proof}

Now we are in the position to prove {\hyperref[t.mainthm]{Theorem \ref*{t.mainthm}}}.

\begin{proof}[Proof of {\hyperref[t.mainthm]{Theorem \ref*{t.mainthm}}}]
	By {\hyperref[t.structureantinef]{Theorem \ref*{t.structureantinef}}}, it remains to show that $Y$ is a hyperbolic manifold with ample canonical divisor and the fibres of $f$ is rationally connected. 
	Firstly note that $(F,\Delta_F)$ is a  projective klt pair with $-(K_F+\Delta_F)$ being strictly nef, where $F$ is a general fibre of $f$. 
	Hence, the variety $F$ is rationally connected by {\hyperref[mainthm-RC]{Theorem \ref*{mainthm-RC}}}. Moreover, by {\hyperref[t.generaltype]{Theorem  \ref*{t.generaltype}}}, it is known that the canonical divisor $K_Y$ is big. 
	Thus to show that $K_Y$ is ample, it is enough to show that $Y$ does not contain any rational curves, 
	which shall be a direct consequence of the hyperbolicity of $Y$. 
	Indeed, if $K_Y$ is not nef, then it follows from the cone theorem (cf.~\cite[Theorem 3.7]{KM98}) that $Y$ contains a rational curve; if $K_Y$ is nef (and big) but not ample, then it follows from the base-point-free theorem (cf.~\cite[Theorem 3.3]{KM98}) that $Y$ admits a birational holomorphic Kodaira fibration onto its canonical model $Y^{\text{can}}$ (with only canonical singularities), in which case, the exceptional locus of $Y\to Y^{\text{can}}$ is covered by rational curves (cf.~\cite[Proof of Proposition 1.3]{KM98}). 
Thus, it is enough to show that $Y$ is hyperbolic.
	
	Let $h:\mathbb{C}\rightarrow Y$ be a holomorphic map and denote by $Z$ the Zariski closure of its image. Assume to the contrary that $\dim(Z)>0$. Then $f(\CC)$ is not degenerate in $Z$. Let $Z'$ be a resolution of $Z$ and let $X'\rightarrow Z'$ be the locally constant fibration induced by $f$. Then applying {\hyperref[t.generaltype]{Theorem  \ref*{t.generaltype}}}, we see that there exists a linear representation of $\pi_1(Z')$ whose image is not virtually solvable. 
	On the other hand, note that the holomorphic map $h$ can be lifted to a holomorphic map $h':\CC\rightarrow Z'$. In particular, the image $h'(\CC)$ is dense in $Z'$, which contradicts {\hyperref[l.degenerencyentirecurve]{Lemma  \ref*{l.degenerencyentirecurve}}}. Hence, we have $\dim(Z)=0$; that is, $h$ is constant and consequently $Y$ is hyperbolic.
\end{proof}

\begin{rmq}
	From the proof, one can easily derive the following result concerning   the geometry of $Y$: for an irreducible closed subvariety $Z\subseteq Y$ of positive dimension and with a resolution $Z'\rightarrow Z$, then $Z'$ is of general type and $\pi_1(Z')$ admits a linear representation whose image is not virtually solvable.
\end{rmq}

Let $X$ be a projective manifold. A \emph{foliation} on $X$ is a coherent subsheaf $\scrF$ of $T_X$ such that $\scrF$ is closed under the Lie bracket and the quotient $T_X/\scrF$ is torision free. The canonical divisor $K_{\scrF}$ of $\scrF$ is a Weil divisor on $X$ such that $\scrO_X(-K_{\scrF})\cong \det\scrF$. We call that $\scrF$ is \emph{regular} if the quotient $T_{X}/\scrF$ is locally free and $\scrF$ is \emph{algebraically integrable} if there exists a rational map $f:X\dashrightarrow Y$ such that $\scrF$ is induced by $f$.

\begin{proof}[Proof of {\hyperref[c.algebraicallyintegrable]{Corollary \ref*{c.algebraicallyintegrable}}}]
	Assume that either $\scrF$ is regular, or $\scrF$ has a compact leaf. By \cite[Theorem 1.1]{Ou21}, there exists a locally trivial fibration $f:X\rightarrow Y$ with rationally connected fibres such that there exists a foliation $\scrG$ on $Y$ with $K_{\scrG}\equiv 0$ and $\scrF=f^{-1}(\scrG)$. In particular, we have $K_{\scrF}\equiv K_{X/Y}$ and therefore $-K_{X/Y}$ is strictly nef. 
	
	For the statement (1), by {\hyperref[t.mainthm]{Theorem \ref*{t.mainthm}}}, the base manifold $Y$ is a canonically polarized projective manifold. In particular, the tangent bundle $T_Y$ is semi-stable with respect to $K_Y$ by the theorem of Aubin-Yau (\cite{Aub78,Yau78}). Observing that the slope of $T_Y$ with respect to $K_Y$ is strictly negative, we see that $\scrG$ is a fortiori the foliation in points and hence $\scrF$ is exactly the foliation induced by the fibration $f:X\rightarrow Y$.
	
	Next we assume that the fundamental group $\pi_1(X)$ is virtually solvable. Then the fundamental group $\pi_1(Y)=\pi_1(X)$ of $Y$ is virtually solvable since the fibres of $f$ are rationally connected. In particular, the image of every linear representation of $\pi_1(Y)$ is again virtually solvable, which contradicts {\hyperref[t.generaltype]{Theorem  \ref*{t.generaltype}}}.
\end{proof}

\begin{exa}
	We collect some examples to show the sharpness of {\hyperref[c.algebraicallyintegrable]{Corollary \ref*{c.algebraicallyintegrable}}}.
	\begin{enumerate}
		\item[(1)] Let $\scrF\subseteq T_{\PP^n}$ be the foliation of rank $r$ defined by a linear projection $\PP^n\dashrightarrow \PP^{n-r}$. Then  $\det\scrF\cong \scrO_{\PP^n}(r)$ is ample. This shows that the regularity of $\scrF$ in {\hyperref[c.algebraicallyintegrable]{Corollary \ref*{c.algebraicallyintegrable}}}  cannot be dropped.
		
		\item[(2)] Let $X$ be an abelian variety and let $\scrF$ be a general linear foliation on $X$, that is, the translation of a general linear space. Then $-K_{\scrF}$ is numerically trivial and $\scrF$ is not algebraically integrable. 
		This shows that we cannot replace strict nefness by nefness in {\hyperref[c.algebraicallyintegrable]{Corollary \ref*{c.algebraicallyintegrable} (1)}}.
		
		\item[(3)] Let $\scrF$ be the foliation induced by the fibration $X=\PP E\to C$, which is Mumford's example discussed after {\hyperref[t.AD]{Theorem  \ref*{t.AD}}}. Then $-K_{\scrF}$ is strictly nef. This shows that the assumption on the fundamental group of $X$ cannot be removed in {\hyperref[c.algebraicallyintegrable]{Corollary \ref*{c.algebraicallyintegrable} (2)}}.
	\end{enumerate}
\end{exa}

\vskip 2\baselineskip

\section{Ampleness of \texorpdfstring{$-(K_X+\Delta)$}{text}, Proof of \texorpdfstring{{\hyperref[mainthm-ample-3klt]{Theorem \ref*{mainthm-ample-3klt}}}}{text}}\label{section-pf-thmc}

In this section, we are devoted to the proof of  \texorpdfstring{{\hyperref[mainthm-ample-3klt]{Theorem \ref*{mainthm-ample-3klt}}}}{text}.
Let $(X,\Delta)$ be a projective klt threefold pair such that the anti-log canonical divisor $-(K_X+\Delta)$ is strictly nef.
%The whole section is devoted to the proof of {\hyperref[mainthm-ample-3klt]{Theorem \ref*{mainthm-ample-3klt}}}. 
 
%To prove \texorpdfstring{{\hyperref[mainthm-ample-3klt]{Theorem \ref*{mainthm-ample-3klt}}}}{text}, we need to do some preparations by recalling and also generalizing some previous results to the singular (pair) setting. 
%Second, we apply such results to prove \texorpdfstring{{\hyperref[mainthm-ample-3klt]{Theorem \ref*{mainthm-ample-3klt}}}}{text}. 
\subsection{\texorpdfstring{$\mathbb{Q}$}{text}-effectivity of \texorpdfstring{$-(K_X+\Delta)$}{text}}
To show the ampleness of $-(K_X+\Delta)$, we aim to show that $-(K_X+\Delta)$ is \textit{num-effective}, i.e., numerically equivalent to an effective divisor, which is our main result  {\hyperref[prop-klt-eff-ample]{Proposition \ref*{prop-klt-eff-ample}}} in this subsection.
Before that, we recall and generalize some previous results for the convenience of our proofs.

First we give the following generalization of \cite[Lemma 1.1]{Ser95}.

\begin{lemme}[{cf.~\cite[Lemma 1.1]{Ser95}}]\label{lem_strict_nef_k}
Let $(X,\Delta)$ be a projective klt pair, and $L_X$  a strictly nef $\mathbb{Q}$-divisor on $X$.
Then $(K_X+\Delta)+tL_X$ is  strictly nef for every $t>2m\dim X$ where $m$ is the Cartier index of $L_X$.
\end{lemme}

\begin{proof}
By the cone theorem (cf.~\cite[Theorem 3.7, p.~76]{KM98}), every curve $C$ in $X$ is numerically equivalent to a linear combination 
$M+\sum a_iC_i$ (a finite sum), where  $M$ is  pseudo-effective such that $M\cdot (K_X+\Delta)\geqslant 0$ and the $C_i$'s are (integral) rational curves satisfying $0<C_i\cdot (-K_X-\Delta)\leqslant2\dim X$	 with $a_i>0$.
Since $mL_X$ is Cartier and strictly nef,  we have $L_X\cdot M\geqslant 0$, and $L_X\cdot C_i\geqslant \frac{1}{m}$ for all $i$.
Therefore, for each $C_i$, 
\begin{align}\tag{\dag}\label{eq_lem_strict}
((K_X+\Delta)+tL_X)\cdot C_i\geqslant tL_X\cdot C_i-2\dim X>0.
\end{align}
If $(K_X+\Delta)\cdot C\geqslant 0$, then $((K_X+\Delta)+tL_X)\cdot C>0$ since $L_X\cdot C>0$.
If $(K_X+\Delta)\cdot C<0$, then the decomposition of $C$ contains some $C_i$; hence our lemma follows from (\ref{eq_lem_strict}).
\end{proof}

We remark here that, when $\dim X=2$, the conclusion of  {\hyperref[lem_strict_nef_k]{Lemma  \ref*{lem_strict_nef_k}}} holds for a more optimal lower bound $t>3$ (cf.~\cite[Proposition 3.8]{Fuj12}).

\begin{lemme}\label{lem-big-ample}
Let $(X,\Delta)$ be a projective klt pair, 
and $L_X$  a strictly nef $\mathbb{Q}$-divisor on $X$.	
Suppose  $a(K_X+\Delta)+bL_X$ is big for some $a,b\geqslant 0$.
Then $(K_X+\Delta)+tL_X$ is ample for sufficiently large $t$. 
\end{lemme}

\begin{proof}
If $a=0$, then $L_X$ is big. %, hence it follows from 
If $a\neq 0$, then $(K_X+\Delta)+\frac{b}{a}L_X$ is big.
In both cases, it follows from  {\hyperref[lem_strict_nef_k]{Lemma  \ref*{lem_strict_nef_k}}} that $(K_X+\Delta)+tL_X$ is strictly nef and big for $t\gg 1$, noting that nef divisors are always pseudo-effective.
Then $2((K_X+\Delta)+tL_X)-(K_X+\Delta)$ is also nef and big.
By the base-point-free theorem (cf.~\cite[Theorem 3.3, p.~75]{KM98}), some multiple of $(K_X+\Delta)+tL_X$ defines a morphism $X\to\mathbb{P}^N$, which is finite by the projection formula. Therefore, $(K_X+\Delta)+tL_X$ is ample, and our lemma is proved.
\end{proof}

We denote by $\overline{\textup{NE}}(X)$ (resp. $\overline{\textup{ME}}(X)$) the \textit{Mori cone} (resp. \textit{movable cone}) of a projective variety $X$ (cf.~\cite{BDPP13}).
The following lemma characterizes the situation when $K+tL$ is not big. 
\begin{lemme}\label{lem-not-big-some}
Let $(X,\Delta)$ be a projective klt  pair of dimension $n$, 
and $L_X$  a strictly nef $\mathbb{Q}$-divisor on $X$.	
Then $(K_X+\Delta)+uL_X$ is not big for some rational number $u>2mn$ with $m$  the Cartier index of $L_X$, if and only if
$(K_X+\Delta)^i\cdot L_X^{n-i}=0$ for any $0\leqslant i\leqslant n$. 	
Moreover, if one of the  equivalent conditions holds, there exists a  class $0\neq \alpha\in\overline{\textup{ME}}(X)$ such that 
$(K_X+\Delta)\cdot\alpha=L_X\cdot\alpha=0$. 
\end{lemme}

\begin{proof}
Suppose first that $(K_X+\Delta)+uL_X$ is not big for some rational number $u>2mn$. 
Set $u':=u-2mn>0$, $D_1:=(K_X+\Delta)+(u'/2+2mn)L_X$ and $D_2:=(u'/2)L_X$.	
Then $(K_X+\Delta+uL_X)^n=(D_1+D_2)^n=0$.
Since both $D_1$ and $D_2$ are nef (cf.~{\hyperref[lem_strict_nef_k]{Lemma  \ref*{lem_strict_nef_k}}}), the vanishing $D_1^i\cdot D_2^{n-i}=0$ for every $0\leqslant i\leqslant n$  yields $(K_X+\Delta)^i\cdot L_X^{n-i}=0$ for any $0\leqslant i\leqslant n$. 
Conversely, if $(K_X+\Delta)^i\cdot L_X^{n-i}=0$ for any $0\leqslant i\leqslant n$, then $((K_X+\Delta)+uL_X)^n=0$ for any $u$; in this case, $(K_X+\Delta)+uL_X$ being not big for any $u>2mn$ follows from \cite[Proposition 2.61, p.~68]{KM98} and {\hyperref[lem_strict_nef_k]{Lemma  \ref*{lem_strict_nef_k}}}. 
Therefore, the first part of our theorem is proved.

Assume that the nef divisor $K_X+\Delta+uL_X$ is not big for some $u>2mn$. 
Then, there exists a class $\alpha\in\overline{\textup{ME}}(X)$ such that $(K_X+\Delta+uL_X)\cdot\alpha=0$ (cf.~\cite[Theorem 2.2 and the remarks therein]{BDPP13}). 
%Indeed, we can choose $\alpha$ to be a rational class.
%Neither $K_X+\Delta+uL_X$ nor $-(K_X+\Delta+uL_X)$ being strictly positive  on $\overline{\textup{ME}}(X)$, there are rational points $P,Q\in\overline{\textup{ME}}(X)$ such that $(K_X+\Delta+uL_X)\cdot P\geqslant 0$ and $(K_X+\Delta+uL_X)\cdot Q\leqslant0$, noting that rational points are dense in $\overline{\textup{ME}}(X)$.
%If one of the inequalities is an equality, then we are done.
%Otherwise, we take $\mu:=-\frac{((K_X+\Delta+uL_X)\cdot P)}{((K_X+\Delta+uL_X)\cdot Q)}>0$ (a rational number) and $\alpha:=P+\mu Q\in\overline{\textup{ME}}(X)$, which is a rational class in $\overline{\textup{ME}}(X)$ satisfying $(K_X+\Delta+uL_X)\cdot\alpha=0$.
Suppose that $L_X\cdot\alpha\neq 0$.  
Then $L_X\cdot\alpha>0$ and thus $(K_X+\Delta)\cdot\alpha<0$.
By the cone theorem (cf.~\cite[Theorem 3.7, p.~76]{KM98}), we have 
$\alpha=M+\sum a_iC_i$ 
where $M$ is a class lying in $\overline{\textup{NE}}(X)_{(K_X+\Delta)\geqslant 0}$, $a_i>0$, and the $C_i$ are extremal rational curves with $0<-(K_X+\Delta)\cdot C_i\leqslant 2n$.
%Since $(K_X+\Delta)\cdot \alpha<0$, there exists a non-zero $i$, i.e., $k>0$.
Now, for each $i$, the intersection $(K_X+\Delta+uL_X)\cdot C_i>0$, a contradiction to the choice of $\alpha$.
So we have $(K_X+\Delta)\cdot \alpha=L_X\cdot\alpha=0$.
\end{proof}

%Now we give an alternative proof of {\hyperref[main_thm_surface]{Proposition  \ref*{main_thm_surface}}} (cf.~\cite[Corollary 1.8]{HL20}) by applying a recent result on \textit{almost strictly nef} divisors on surfaces. 
%; cf. Proposition \ref{prop_Q-Goren_surface} for an extension to the non-normal case. 
%The proof depends on a recent progress on the bigness of   \textit{almost strictly nef} divisors on smooth projective varieties (cf.~\cite[Theorem 20]{Cha20} and \cite[Proposition 2.3]{CCP08}). %

%\begin{proof}[\textup{\textbf{Proof of {\hyperref[main_thm_surface]{Proposition \ref*{main_thm_surface}}}}}]
%Taking a minimal resolution $\pi:\widetilde{X}\to X$, we see that $L_{\widetilde{X}}:=\pi^*L_X$ is almost strictly nef. 
%By \cite[Theorem 26]{Cha20},  $K_{\widetilde{X}}+tL_{\widetilde{X}}$ is big for all $t>3$.
%Then its push-down $K_X+tL_X$	 is big (as a Weil divisor) for  all $t>3$ (cf.~\cite[Lemma 4.10 (i\!i\!i) and its proof]{FKL16}), i.e., $K_X+tL_X$ lies in the interior part of the closure of the cone generated by effective Weil divisors on $X$ for all $t>3$.  
%So $K_X+\Delta+tL_X$ is big (as a Cartier divisor) for all $t>3$.
%Since $L_X$ is a strictly nef Cartier divisor, by  {\hyperref[lem-big-ample]{Lemma  \ref*{lem-big-ample}}} and the remark after it, we see that for $t>3$,  $K_X+\Delta+tL_X$ is strictly nef and $K_X+\Delta+2tL_X=2(K_X+\Delta+tL_X)-(K_X+\Delta)$ is  nef and big.  
%Applying the base-point-free theorem for $K_X+\Delta+tL_X$ (cf.~\cite[Theorem 3.3, p. 75]{KM98}), %(cf.~{\hyperref[lem_strict_nef_k]{Lemma  \ref*{lem_strict_nef_k}}}) for $t\gg 1$,
% our theorem follows from the strict nefness of $K_X+\Delta+tL_X$ for $t>3$ (cf. {\hyperref[lem-big-ample]{Lemma  \ref*{lem-big-ample}}} and the remark after it).   
%The second part  follows from \cite{Zhang06}.
%\end{proof}

In the following, we slightly generalize \cite[Theorem 2.3]{Ser95} and \cite[Corollary 1.8]{HL20}  %{\hyperref[main_thm_surface]{Proposition \ref*{main_thm_surface}}}   
to the following (not necessarily normal)  $\mathbb{Q}$-Gorenstein surface case (cf.~\cite[Conjecture 1.3]{CCP08}). 
This is also mentioned at the end of \cite[Section 2]{Ser95}.

For a $\mathbb{Q}$-factorial normal projective variety $X$ and a prime divisor $S\subseteq X$,  the \textit{canonical divisor} $K_S\in\textup{Pic}(S)\otimes\mathbb{Q}$ is defined by  
$K_S:=\frac{1}{m}(mK_X+mS)|_S$, where $m\in\mathbb{N}$ is the smallest positive integer such that both $mK_X$ and $mS$ are Cartier divisors on $X$.

\begin{prop}\label{prop_Q-Goren_surface}
Let $(X,\Delta)$ be a $\mathbb{Q}$-factorial klt threefold pair,  $S$ a prime divisor on $X$, and  $L_S$ a strictly nef  divisor (resp. almost strictly nef) on $S$.
Then $K_S+tL_S$ is ample (resp. big) for sufficiently large $t$.
\end{prop}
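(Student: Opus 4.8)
The plan is to reduce the statement on the (possibly singular, possibly non-normal) surface $S$ to the setting of the earlier lemmas, where the ambient object is a projective klt pair with a strictly nef $\mathbb{Q}$-divisor. First I would pass to the normalization $\nu : S^{\nu} \to S$; since $L_S$ is strictly nef (resp. almost strictly nef) on $S$, its pullback $L_{S^\nu} := \nu^* L_S$ is strictly nef (resp. almost strictly nef) on $S^\nu$, because every curve on $S^\nu$ maps finitely onto a curve on $S$. By adjunction, $K_{S^\nu} + C = \nu^*(K_S)$ for an effective conductor divisor $C \ge 0$ on $S^\nu$, so $K_{S^\nu} + tL_{S^\nu} \le \nu^*(K_S + tL_S)$ in an appropriate sense; since $\nu$ is finite, ampleness (resp. bigness) of $K_{S^\nu} + t L_{S^\nu}$ would give ampleness (resp. bigness) of $K_S + t L_S$ by the projection formula and the finiteness of $\nu$. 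Thus it suffices to treat $S$ normal. On the other hand, running an MMP / taking a minimal resolution $\mu : \widetilde{S} \to S$ with $K_{\widetilde{S}} + \Gamma = \mu^*(K_S + \Delta|_S)$ for a klt boundary (or using that $(S, \mathrm{Diff})$ is klt since $(X,\Delta)$ is klt and $S$ is a prime divisor, via adjunction and inversion of adjunction), I reduce to the case where $S$ is a smooth projective surface, or at least a klt surface pair, equipped with a strictly nef (resp. almost strictly nef) $\mathbb{Q}$-divisor $L_S$.

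Once on a smooth (or klt) surface, the key input is the two-dimensional, sharpened form of {\hyperref[lem_strict_nef_k]{Lemma \ref*{lem_strict_nef_k}}} recorded in the remark after it (after \cite[Proposition 3.8]{Fuj12}): $K_S + \mathrm{Diff} + tL_S$ is strictly nef for $t > 3$. By {\hyperref[lem-big-ample]{Lemma \ref*{lem-big-ample}}}, it then remains to show $a(K_S+\mathrm{Diff}) + bL_S$ is big for some $a,b \ge 0$; equivalently, by {\hyperref[lem-not-big-some]{Lemma \ref*{lem-not-big-some}}} (in dimension $2$), I must rule out the degenerate case $(K_S+\mathrm{Diff})^i \cdot L_S^{2-i} = 0$ for all $i$, which by that lemma produces a nonzero movable class $\alpha$ with $(K_S+\mathrm{Diff})\cdot\alpha = L_S\cdot\alpha = 0$. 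On a surface a movable class is nef, and a nonzero nef class $\alpha$ with $L_S\cdot\alpha=0$ contradicts the strict nefness of $L_S$ unless $\alpha^2=0$; in the almost strictly nef case one descends via $\pi : S \to S'$ to reach the honestly strictly nef divisor. The remaining possibility $L_S^2 = L_S\cdot(K_S+\mathrm{Diff}) = (K_S+\mathrm{Diff})^2 = 0$ with $L_S$ strictly nef is handled by a Hodge index / Riemann–Roch argument: $L_S$ strictly nef forces $L_S^2 > 0$ unless $\kappa(S) \le 0$ and $L_S$ sits on the boundary of the nef cone, and then one analyses the classification of surfaces with $\kappa = 0$ or ruled surfaces directly, using that a strictly nef divisor on a surface is big unless the surface is an abelian surface, a hyperelliptic surface, or a (specific) ruled surface over a curve of genus $\ge 1$ — and in those exceptional cases $K_S$ is either trivial or one checks $K_S + tL_S$ becomes ample/big directly because $L_S$ itself is strictly nef with $L_S^2 = 0$ forces $h^0(mL_S)$ to grow, or one invokes the surface case of Serrano's conjecture from \cite[Corollary 1.8]{HL20}.

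The main obstacle I expect is precisely this last degenerate case $L_S^2 = 0$ together with $\kappa(S) \le 0$: here neither {\hyperref[lem-big-ample]{Lemma \ref*{lem-big-ample}}} nor {\hyperref[lem-not-big-some]{Lemma \ref*{lem-not-big-some}}} directly closes the argument, and one genuinely needs either the Enriques–Kodaira classification (to list which surfaces carry a strictly nef but non-big divisor) or an appeal to the already-established surface case of Serrano's conjecture (\cite[Corollary 1.8]{HL20}) to conclude that $K_S + tL_S$ is ample — noting that when $K_S \equiv 0$ the claim is that $tL_S$ itself is ample for large $t$, which is false for non-big $L_S$, so one must check that this configuration cannot occur under the klt/adjunction constraints coming from $(X,\Delta)$ being klt and $-(K_X+\Delta)$ being strictly nef. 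I would therefore organize the proof so that the $\mathbb{Q}$-factorial klt threefold hypothesis on $(X,\Delta)$ is used to guarantee that $(S,\mathrm{Diff})$ is a klt pair with $\mathrm{Diff}$ effective, ruling out the $K_S \equiv 0$ obstruction via the contribution of $\mathrm{Diff}$, and otherwise cite the surface case of {\hyperref[main-conj-singular-arbitrary]{Question \ref*{main-conj-singular-arbitrary}}} proved in \cite{HL20} to finish.
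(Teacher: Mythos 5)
Your proposal contains several genuine gaps, and it takes a different (and ultimately non-working) route from the paper.

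The most serious problem is the reduction to the normal case. You write that since $K_{S^\nu}+tL_{S^\nu}\le\nu^*(K_S+tL_S)$ and $\nu$ is finite, ampleness of $K_{S^\nu}+tL_{S^\nu}$ gives ampleness of $K_S+tL_S$. This is false: for finite $\nu$ one has $D$ ample on $S$ if and only if $\nu^*D$ ample on $S^\nu$, but $\nu^*(K_S+tL_S)=K_{S^\nu}+tL_{S^\nu}+C$ with $C\ge 0$ the conductor, and adding an effective divisor to an ample divisor destroys nefness in general (any curve inside $C$ with $C\cdot Z$ very negative is a problem). So you cannot pass to $S^\nu$ for the ampleness statement, and the hard work really does take place on the non-normal $S$. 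A second problem is that you slide from $K_S$ to $K_S+\mathrm{Diff}$: the quantity in the proposition is $K_S=\frac1m(mK_X+mS)|_S$, which does not see $\Delta$ at all; proving $K_S+\mathrm{Diff}+tL_S$ ample does not give $K_S+tL_S$ ample once $\mathrm{Diff}>0$. Third, your closing paragraph appeals to ``the klt/adjunction constraints coming from $(X,\Delta)$ being klt and $-(K_X+\Delta)$ being strictly nef'', but strict nefness of $-(K_X+\Delta)$ is \emph{not} a hypothesis of {\hyperref[prop_Q-Goren_surface]{Proposition~\ref*{prop_Q-Goren_surface}}} — $L_S$ is an arbitrary strictly nef divisor on an arbitrary prime divisor $S$. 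The difficult subcase $L_S^2=0$, $\kappa\le 0$ that you correctly identify is actually disposed of by Riemann–Roch and the Enriques–Kodaira classification inside Serrano's original argument (a strictly nef divisor with $L^2=0$ on a minimal surface with $\kappa\le 0$ forces an effective numerical multiple and hence a contradiction), not by any constraint from the ambient threefold.

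The paper's proof is organized quite differently. Rather than trying to push the statement down to a klt surface pair and use the cone-theorem lemmas of this section, it invokes Serrano's proof of \cite[Theorem 2.3]{Ser95} verbatim — that argument is written precisely for possibly non-normal Gorenstein surfaces and already handles the $\kappa\le 0$ case. The only point of Serrano's proof that uses the Gorenstein hypothesis, and which must be redone because here $S$ is only $\mathbb{Q}$-Gorenstein inside a $\mathbb{Q}$-factorial klt threefold, is the injection $f_*\scrO_R(rmK_R)\hookrightarrow j^*\scrO_S(rmK_S)$, where $j:T\to S$ is the normalization and $f:R\to T$ a minimal resolution. The paper produces this from three ingredients: $X$ is Cohen–Macaulay since $(X,\Delta)$ is klt, the adjunction-type injection $\scrO_T(mK_T)\hookrightarrow j^*\scrO_S(mK_S)$ from \cite[Lemma 5-1-9]{KMM87}, and Sakai's $f_*\scrO_R(f^*(mK_T))\cong\scrO_T(mK_T)$ from \cite[Theorem (2.1)]{Sak84} together with the effectivity of the relative canonical of a minimal resolution. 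For the almost strictly nef case the paper replaces Serrano by \cite[Theorem 26]{Cha20} on the smooth model $R$. Your outline does not reach this injection and offers no substitute for the non-normal $\kappa\le 0$ analysis.
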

\begin{proof}
Since $X$ is $\mathbb{Q}$-factorial, there exists some positive integer $m$ such that $m(K_X+S)$ is Cartier. 
Let $j:T\to S$ be the normalization, and $f:R\to T$  a minimal resolution.

First, we show the case when $L_S$ is strictly nef. 
In view of \cite[Proof of Theorem 2.3]{Ser95}, we only need to reprove the following injection	in \cite[Claim 4 of Proof of Theorem 2.3]{Ser95} for sufficiently large $r\gg 1$:
\begin{equation}\label{eq-inclusion}
f_*\scrO_R(rmK_R)\hookrightarrow j^*\scrO_S(rmK_S).
\end{equation} %, and $\pi$ is the composition. 
Since $(X,\Delta)$ is a dlt pair,  $X$ is Cohen-Macaulay (cf.~\cite[Theorems 5.10 and 5.22]{KM98}).
By \cite[Lemma 5-1-9]{KMM87}, there is a natural injective homomorphism
$$\omega_T^{[m]}:=(\omega_T^{\otimes m})^{\vee\vee}=\scrO_T(mK_T)\hookrightarrow j^*\scrO_X(m(K_X+S))=j^*\scrO_S(mK_S).$$
On the other hand, it follows from \cite[Theorem (2.1)]{Sak84} that
%taking the double dual, injection $\scrO_{\widetilde{E}}(K_{\widetilde{E}})\hookrightarrow \scrO(\sigma^*K_{E_N})=\sigma^*\scrO(K_{E_N})^{\vee\vee}\hookrightarrow \sigma^*n_E^*\scrO(K_E)$??? $\sigma^*n_E^*\scrO(K_E)$ is locally free and hence reflexive, reflexive hull? 
$$f_*\scrO_R(f^*(mK_T))\cong\scrO_T(mK_T).$$
Here, our $mK_T$  is only a Weil divisor. 
Replacing $m$ by a multiple, we write $f^*(mK_T)=mK_R+\Gamma$ with $\Gamma$ being an effective (integral)  divisor (cf.~\cite[(4.1)]{Sak84}).
Then 
$$f_*\scrO_R(mK_R)\subseteq f_*(\scrO_R(mK_R)\otimes\scrO_R(\Gamma))=f_*\scrO_R(f^*(mK_T))\cong\scrO_T(mK_T)\hookrightarrow j^*\scrO_S(mK_S).$$
%note that there is a non-zero trace map $f_*\omega_R\to\omega_T$% (cf.~\cite[Proposition 5.77]{KM98}), which induces a non-zero map 
%$\delta_r:f_*(\omega_R^{\otimes r})\to\scrO_T(rK_T)$ for each $r$.
%Since $f_*(\omega_R^{\otimes r})$ is a torsion free rank one sheaf,  $\delta_r$ is  injective.
Hence, we get the inclusion {\hyperref[eq-inclusion]{(\ref*{eq-inclusion})}} as desired and  our proposition for the case $L_S$ being strictly nef is proved. 

Second, if $L_S$ is almost strictly nef, then so is $f^*j^*L_S$.
By \cite[Theorem 26]{Cha20}, our $K_T+tf^*j^*L_S$ is big for sufficiently large $t\gg 1$.
Applying the inclusion {\hyperref[eq-inclusion]{(\ref*{eq-inclusion})}},  we see that,  for $t\gg 1$,  $K_{S}+tL_S$ is the sum of an effective divisor and the pushforward of some big divisor along a generically finite morphism; hence $K_{F_i}+tL_Y|_{F_i}$ is also big, and our proposition for the case $L_S$ being almost strictly nef is proved.
\end{proof}

In what follows, we  show the main result of this subsection, which  is a key to the proof of  {\hyperref[mainthm-ample-3klt]{Theorem \ref*{mainthm-ample-3klt}}}. 
%However, we are still not able to deal with the general  case when  $aL_X+b(K_X+\Delta)$ is num-effective (when $X$ is non-$\mathbb{Q}$-factorial) due to a gap of the adjunction.

\begin{prop}\label{prop-klt-eff-ample}
Let $(X,\Delta)$ be a projective klt threefold pair.
Let $L_X$ be a strictly nef $\mathbb{Q}$-divisor on $X$.
Suppose that $L_X$ is numerically equivalent to a non-zero effective divisor.
Then $K_X+\Delta+tL_X$ is ample for sufficiently large $t$.
\end{prop}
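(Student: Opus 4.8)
The plan is to argue by contradiction, reducing to the two‑dimensional statement {\hyperref[prop_Q-Goren_surface]{Proposition \ref*{prop_Q-Goren_surface}}}. By {\hyperref[lem-big-ample]{Lemma \ref*{lem-big-ample}}} it is enough to produce $a,b\geqslant 0$ for which $a(K_X+\Delta)+bL_X$ is big, so suppose instead that $K_X+\Delta+tL_X$ is not big for any $t\geqslant 0$. Fixing $t>2m\dim X$ with $m$ the Cartier index of $L_X$ and applying {\hyperref[lem-not-big-some]{Lemma \ref*{lem-not-big-some}}}, we obtain $(K_X+\Delta)^i\cdot L_X^{3-i}=0$ for all $0\leqslant i\leqslant 3$. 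I will only use the two relations $L_X^3=0$ and $(K_X+\Delta)\cdot L_X^2=0$, and the aim is to contradict the second one using the hypothesis that $L_X$ is numerically equivalent to a non‑zero effective divisor.

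First I would reduce to the case where $X$ is $\mathbb{Q}$-factorial: replacing $X$ by a small $\mathbb{Q}$-factorialization $\pi\colon X'\to X$, with $K_{X'}+\Delta'=\pi^\ast(K_X+\Delta)$, leaves all the intersection numbers above unchanged, turns $L_X$ into $\pi^\ast L_X$ which is nef and almost strictly nef, and this pullback is again numerically equivalent to an effective divisor on $X'$. So assume $X$ is $\mathbb{Q}$-factorial and write $L_X\equiv D=\sum_j a_jS_j$ with $a_j>0$ and the $S_j$ prime (at least one term). Since $L_X^2\cdot D=L_X^3=0$ and each $L_X^2\cdot S_j=(L_X|_{S_j})^2\geqslant 0$ by nefness, we get $(L_X|_{S_j})^2=0$ for every $j$. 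Intersecting $L_X\equiv\sum_k a_kS_k$ against $L_X\cdot S_j$ and using $L_X^2\cdot S_j=0$ gives $a_j(S_j^2\cdot L_X)=-\sum_{k\neq j}a_k(S_j\cdot S_k\cdot L_X)$; as each $S_j\cdot S_k$ with $k\neq j$ is an effective $1$-cycle and $L_X$ is nef, the right‑hand side is $\leqslant 0$, so $S_j^2\cdot L_X\leqslant 0$ for every $j$.

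Now fix $j$ and apply {\hyperref[prop_Q-Goren_surface]{Proposition \ref*{prop_Q-Goren_surface}}} to the prime divisor $S_j$ and the almost strictly nef divisor $L_X|_{S_j}$: the divisor $K_{S_j}+tL_X|_{S_j}$ is big for $t\gg 1$. Since $L_X|_{S_j}$ is nef and not numerically trivial on the surface $S_j$ while $(L_X|_{S_j})^2=0$, a big divisor has positive intersection with it, so $(K_{S_j}+tL_X|_{S_j})\cdot L_X|_{S_j}>0$, i.e. $K_{S_j}\cdot L_X|_{S_j}>0$. Writing $\Delta=b_jS_j+\Delta^{(j)}$ with $0\leqslant b_j<1$ (by klt) and $\Delta^{(j)}\geqslant 0$ not containing $S_j$, the adjunction formula $K_{S_j}=(K_X+S_j)|_{S_j}$ gives $(K_X+\Delta)|_{S_j}=K_{S_j}+\Delta^{(j)}|_{S_j}-(1-b_j)S_j|_{S_j}$; intersecting with $L_X|_{S_j}$ the three contributions are $K_{S_j}\cdot L_X|_{S_j}>0$, then $\Delta^{(j)}|_{S_j}\cdot L_X|_{S_j}\geqslant 0$ (effective against nef), then $-(1-b_j)(S_j^2\cdot L_X)\geqslant 0$ (since $1-b_j>0$ and $S_j^2\cdot L_X\leqslant 0$). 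Hence $(K_X+\Delta)\cdot L_X\cdot S_j>0$ for every $j$, and summing with the weights $a_j$ yields $(K_X+\Delta)\cdot L_X^2=(K_X+\Delta)\cdot L_X\cdot D>0$, contradicting $(K_X+\Delta)\cdot L_X^2=0$. Therefore some $a(K_X+\Delta)+bL_X$ is big, and {\hyperref[lem-big-ample]{Lemma \ref*{lem-big-ample}}} gives ampleness of $K_X+\Delta+tL_X$ for $t\gg 1$.

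The step I expect to be most delicate is the interplay with the boundary: the adjunction expression for $(K_X+\Delta)|_{S_j}$ must stay usable precisely when a component $S_j$ of the effective divisor $D\equiv L_X$ lies in $\operatorname{Supp}\Delta$, and it is exactly the strict inequality $b_j<1$ coming from the klt hypothesis — combined with the sign $S_j^2\cdot L_X\leqslant 0$ extracted from numerical effectivity — that keeps the final estimate strict and produces the contradiction. The other point requiring care is the $\mathbb{Q}$-factorialization reduction, namely verifying that $\pi^\ast L_X$ remains almost strictly nef and numerically effective so that {\hyperref[prop_Q-Goren_surface]{Proposition \ref*{prop_Q-Goren_surface}}} still applies to each prime component $S_j$; the remaining ingredients (nefness of restrictions, positivity of a big divisor against a non‑trivial nef class on a surface) are routine.
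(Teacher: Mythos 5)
Your proposal is correct and follows essentially the same route as the paper: pass to a $\mathbb{Q}$-factorialization, write $L_X$ numerically as $\sum a_jS_j$, deduce $(L_X|_{S_j})^2=0$ and $S_j^2\cdot L_X\leqslant 0$ from $L_X^3=0$, apply {\hyperref[prop_Q-Goren_surface]{Proposition \ref*{prop_Q-Goren_surface}}} to get bigness of $K_{S_j}+tL_X|_{S_j}$, and intersect to contradict $(K_X+\Delta)\cdot L_X^2=0$. The only cosmetic difference is that you carry out the boundary adjunction $(K_X+\Delta)|_{S_j}=K_{S_j}+\Delta^{(j)}|_{S_j}-(1-b_j)S_j|_{S_j}$ directly and replace {\hyperref[lem-hodge-nef-big]{Lemma \ref*{lem-hodge-nef-big}}} with the equivalent observation that a big divisor on a surface pairs positively with a nonzero nef class, while the paper works with $K_Y$ alone and folds $\Gamma\cdot L_Y^2\geqslant 0$ and $tL_Y^3=0$ in at the last step; both organizations give the identical contradiction.
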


Before proving {\hyperref[prop-klt-eff-ample]{Proposition \ref*{prop-klt-eff-ample}}}, we  give the lemma below, which can be easily deduced from Zariski decomposition on the surface (or Kodaira's lemma) and the Hodge index theorem. 
\begin{lemme}\label{lem-hodge-nef-big}
Let $L$ be a nef $\mathbb{Q}$-Cartier divisor on a smooth projective surface $S$.
Suppose that $L\cdot B=0$ for some big $\mathbb{Q}$-Cartier divisor $B$ on $S$.
Suppose further that $L^2=0$.
Then $L\equiv 0$.
\end{lemme}

%With all the preparations settled, we will start the proof of   {\hyperref[prop-klt-eff-ample]{Proposition \ref*{prop-klt-eff-ample}}}.

\begin{proof}[Proof of {\hyperref[prop-klt-eff-ample]{Proposition \ref*{prop-klt-eff-ample}}}]
Suppose the contrary that $K_X+\Delta+tL_X$ is not ample for one (and hence any) $t\gg 1$.
By {\hyperref[lem-big-ample]{Lemma \ref*{lem-big-ample}}} and {\hyperref[lem-not-big-some]{Lemma \ref*{lem-not-big-some}}} we have 
\begin{equation}\label{eq_int-nb=0}
(K_X+\Delta)^3=(K_X+\Delta)\cdot L_X^2=(K_X+\Delta)\cdot L_X=L_X^3=0.
\end{equation}
Let us take a $\mathbb{Q}$-factorialization $\pi:(Y,\Gamma)\to (X,\Delta)$, which is a small birational morphism (cf.~\cite[Corollary 1.37, pp.~29-30]{Kollar13}) and let $L_Y:=\pi^*L_X$ which is  nef but not big. 
Then $L_Y^3=0$.

Since $L_X$ is numerically equivalent to a non-zero effective divisor, our $L_Y$ is also numerically equivalent to an effective divisor $\sum n_iF_i$ such that $n_i>0$ for each $i$ and none of $F_i$ is $\pi$-exceptional by the choice of $\pi$.
So it is clear that $L_Y|_{F_i}$ (as the pullback of $L_X|_{\pi(F_i)}$ via $\pi|_{F_i}$) is almost strictly nef (cf.~\hyperref[defn-almost-sn]{Definition \ref*{defn-almost-sn}}). 
By {\hyperref[prop_Q-Goren_surface]{Proposition \ref*{prop_Q-Goren_surface}}}, we see that   $K_{F_i}+tL_Y|_{F_i}$ is also big for $t\gg 1$. 
Since $L_Y^3=0$, we have $(L_Y|_{F_i})^2=L_Y^2\cdot F_i=0$ for every $i$ due to the nefness of $L_Y$. Then by {\hyperref[lem-hodge-nef-big]{Lemma \ref*{lem-hodge-nef-big}}}, we have
$$0<L_Y|_{F_i}\cdot (K_{F_i}+tL_Y|_{F_i})=L_Y\cdot (K_Y+F_i)\cdot F_i.$$
Notice that $L_Y\cdot F_i^2=0$ for every $i$. In fact, otherwise if $L_Y\cdot F_{i_0}^2>0$ for some $i_0$, then 
\[
0=F_{i_0}\cdot L_Y^2=F_{i_0}\cdot \sum n_iF_i\cdot L_Y\geqslant n_{i_0}L_Y\cdot F_{i_0}^2>0,
\]
which is absurd.
Consequently, we have $L_Y\cdot K_Y\cdot F_i>0$ for every $i$, 
and as a result,
$$(K_Y+tL_Y)\cdot L_Y^2=K_Y\cdot L_Y^2=K_Y\cdot L_Y\cdot \sum n_iF_i>0.$$
Since $L_Y$ is nef, the above inequality further implies that 
\[
(K_X+\Delta+tL_X)\cdot L_X^2=(K_Y+\Gamma+tL_Y)\cdot L_Y^2>0,
\]
by the projection formula. This nevertheless contradicts the calculation $L_X^3=(K_X+\Delta)\cdot L_X^2=0$, and the proposition is thus proved.
\end{proof}

\subsection{Proof of \texorpdfstring{{\hyperref[mainthm-ample-3klt]{Theorem \ref*{mainthm-ample-3klt}}}}{text}}
In the beginning, let us recall
the following conjecture on the numerical nonvanishing for generalized polarized pairs, which is closely related to our {\hyperref[main-conj-singular-ample]{Question \ref*{main-conj-singular-ample}}} (cf.~{\hyperref[prop-klt-eff-ample]{Proposition \ref*{prop-klt-eff-ample}}}). 
Recall that the notion of \textit{generalized polarized pairs} was first introduced in \cite{BZ16} to deal with the effectivity of Iitaka fibrations.  %(cf.~\cite{Bir19}). 
%We refer readers to \cite{Kaw98},  and \cite{HL17} for more information and applications on the generalized polarized pairs. 
\begin{conj}[{\cite[Conjecture 1.2]{HL20}}]
Let $(X,B+\mathbf{M})$ be a projective generalized log canonical pair.
Suppose that 
\begin{enumerate}
\item[(i)] $K_X+B+\mathbf{M}_X$ is pseudo-effective; and
\item[(ii)]	$\mathbf{M}=\sum_j\mu_j\mathbf{M}_j$ where $\mu_j\in\mathbb{R}_{>0}$ and $\mathbf{M}_j$ are nef Cartier $b$-divisors.
\end{enumerate}
Then there exists an effective $\mathbb{R}$-Cartier $\mathbb{R}$-divisor $D$ such that $K_X+B+\mathbf{M}_X\equiv D$.
\end{conj}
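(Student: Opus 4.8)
The plan is to reduce the numerical nonvanishing to the case where $K_X+B+\mathbf M_X$ is nef by running a minimal model program for the generalized pair, and then to establish nonvanishing for nef generalized log canonical pairs by an abundance-type argument, with induction on $\dim X$.

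\textbf{Reduction to the nef case.} After replacing $(X,B+\mathbf M)$ by a $\mathbb{Q}$-factorial generalized dlt modification (these exist for generalized lc pairs), one may assume $(X,B+\mathbf M)$ is $\mathbb{Q}$-factorial and generalized dlt. Since $K_X+B+\mathbf M_X$ is pseudo-effective, I would run a $(K_X+B+\mathbf M_X)$-MMP with scaling of an ample divisor; granting the existence of minimal models for pseudo-effective generalized lc pairs (part of the standard MMP package, unconditional in low dimension), this yields a birational contraction $\phi\colon X\dashrightarrow X'$ onto a minimal model, so $N':=K_{X'}+B'+\mathbf M'_{X'}$ is nef --- here $\mathbf M$ is carried along as a fixed nef $b$-divisor, so $\mathbf M'$ is simply its trace on $X'$. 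On a common resolution $p\colon W\to X$, $q\colon W\to X'$, the negativity lemma gives $p^{*}(K_X+B+\mathbf M_X)=q^{*}N'+E$ with $E\geqslant 0$; pushing forward by $p$, an effective $\mathbb{R}$-divisor numerically equivalent to $N'$ produces one numerically equivalent to $K_X+B+\mathbf M_X$. Thus it suffices to treat $N:=K_X+B+\mathbf M_X$ nef.

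\textbf{The nef case.} Let $\nu:=\nu(N)$ be the numerical dimension. If $\nu=\dim X$, then $N$ is big, so by Kodaira's lemma $N\equiv A+F$ with $A$ ample and $F\geqslant 0$; since an ample divisor is $\mathbb{R}$-linearly equivalent to an effective divisor, $N$ is numerically equivalent to an effective $\mathbb{R}$-divisor. If $\nu=0$, then the nef divisor $N$ is numerically trivial, so $N\equiv 0$ and we are done. In the intermediate range $0<\nu<\dim X$, I would invoke abundance for generalized lc pairs to obtain semiampleness of $N$, hence a contraction $g\colon X\to Z$ with $\dim Z=\nu$ and $N\sim_{\mathbb{R}}g^{*}A_Z$; the generalized canonical bundle formula then produces a generalized lc pair $(Z,B_Z+\mathbf M_Z)$ with $K_Z+B_Z+\mathbf M_Z\sim_{\mathbb{R}}A_Z$ ample, and one concludes by the case $\nu=\dim$ on $Z$ --- equivalently, by induction on the dimension.

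\textbf{Main obstacle.} The genuinely difficult input is the intermediate range $0<\nu<\dim X$ in the nef case: it requires abundance (or at least the corresponding nonvanishing) for nef generalized log canonical pairs, which is open in general and has essentially the strength of the abundance conjecture; likewise the existence and termination of the MMP in the reduction step is unconditional only in low dimension. When $\dim X\leqslant 3$, both the MMP for generalized pairs and abundance for generalized lc pairs are available, so the argument goes through and the conjecture holds there --- which is exactly the range relevant to {\hyperref[mainthm-ample-3klt]{Theorem \ref*{mainthm-ample-3klt}}}.
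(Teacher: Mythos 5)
The statement you were asked to prove is recorded in the paper as a \emph{conjecture}, quoted verbatim from \cite[Conjecture 1.2]{HL20}, and the paper neither proves it nor claims to. The only result in this direction that the paper actually uses is {\hyperref[thm-hl20-numeff]{Theorem \ref*{thm-hl20-numeff}}}, imported from \cite[Theorems 1.5 and 1.11]{HL20}: a threefold statement with two extra hypotheses, namely that the pair is generalized \emph{klt} (not merely lc) and that $K_X+\mathbf{M}_X$ is \emph{not} pseudo-effective. That restricted version is precisely what is fed into the proof of {\hyperref[thm-case-gammaneq0]{Theorem \ref*{thm-case-gammaneq0}}}. So there is no ``paper's own proof'' to compare yours against, and any unconditional argument you produced for the full conjecture would be asserting the resolution of an open problem.

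Your outline --- run a $(K_X+B+\mathbf{M}_X)$-MMP to reduce to the nef case, then split by numerical dimension $\nu$, with $\nu=\dim X$ handled by Kodaira's lemma, $\nu=0$ trivial, and $0<\nu<\dim X$ requiring abundance-type input --- is indeed the natural skeleton, and you are right and candid that the intermediate range is the genuine obstruction. A few cautions beyond that. First, in the reduction step, the MMP for generalized \emph{lc} (rather than klt) pairs, including generalized dlt modifications and termination with scaling, is itself not a black box available in all dimensions; you would need to make explicit which version you are invoking. Second, in the intermediate range, once you invoke semiampleness you already get $N\sim_{\mathbb{R}}g^{*}A_Z$ with $A_Z$ ample, which is directly numerically equivalent to an effective divisor; the detour through the generalized canonical bundle formula and induction is unnecessary. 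Third, and most importantly, your closing claim that ``when $\dim X\leqslant 3$ \dots the conjecture holds there'' overstates the record: the paper's own use of \cite{HL20} goes out of its way to carry the extra klt and non-pseudo-effectivity hypotheses in dimension three, precisely because the full generalized lc threefold statement (equivalently, the abundance-type input your sketch requires) is not asserted there. If you want to close the $\dim X\leqslant 3$ case you must point to a specific reference establishing numerical nonvanishing or abundance for nef generalized \emph{lc} threefold pairs without those side conditions, or else restrict to the hypotheses of {\hyperref[thm-hl20-numeff]{Theorem \ref*{thm-hl20-numeff}}}.
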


We shall apply the following theorem to prove our \hyperref[mainthm-ample-3klt]{Theorem \ref*{mainthm-ample-3klt}}. 
Indeed, Han and Liu showed this theorem in a more general setting (\cite[Theorem 4.5]{HL20}) by proving the existence of a good minimal model   after  passing $X$ to a higher model if necessary. 
We refer readers to  \cite[Section 4]{HL20} for more technical details involved. 
\begin{thm}[{cf.~\cite[Theorems 1.5 and 1.11]{HL20}}]\label{thm-hl20-numeff}
Let $(X,\Delta+\mathbf{M}_X)$ be a $3$-dimensional projective generalized klt pair such that $\mathbf{M}_X$ is an $\mathbb{R}_{>0}$-linear combination of nef Cartier $b$-divisors.
Suppose that $K_X+\Delta+\mathbf{M}_X$ is pseudo-effective and $K_X+\mathbf{M}_X$ is not pseudo-effective.
Then $K_X+\Delta+\mathbf{M}_X$ is numerically equivalent to an effective $\mathbb{R}$-Cartier $\mathbb{R}$-divisor.
\end{thm}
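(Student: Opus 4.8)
The plan is to prove that the generalized pair $(X,\Delta+\mathbf{M}_X)$ admits a \emph{good} minimal model; granting this, the generalized log canonical divisor $K_X+\Delta+\mathbf{M}_X$ becomes semiample on that model, hence $\mathbb{R}$-linearly — in particular numerically — equivalent to an effective divisor there, and the conclusion descends to $X$ by the negativity lemma. Two harmless reductions come first. Passing to a small $\mathbb{Q}$-factorialization of $X$ and pulling back $\Delta$ and each nef Cartier $b$-divisor, we may assume $X$ is $\mathbb{Q}$-factorial: since this morphism is small and birational, an effective representative of the numerical class upstairs pushes forward to one downstairs. And if $K_X+\Delta+\mathbf{M}_X$ is big, Kodaira's lemma already exhibits an effective $\mathbb{R}$-divisor in its $\mathbb{R}$-linear class, so we may and do assume that $K_X+\Delta+\mathbf{M}_X$ is pseudo-effective but not big.

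Next I would run a $(K_X+\Delta+\mathbf{M}_X)$-MMP with scaling of an ample divisor. By the minimal model theory for generalized klt threefold pairs whose nef part is an $\mathbb{R}_{>0}$-linear combination of nef Cartier $b$-divisors (cf.~\cite{BZ16}), this program terminates, and since $K_X+\Delta+\mathbf{M}_X$ is pseudo-effective it ends with a minimal model $\phi\colon X\dashrightarrow X'$, so that $L':=K_{X'}+\Delta'+\mathbf{M}_{X'}$ is nef. Bigness and pseudo-effectivity are preserved along $\phi$, so $L'$ is nef but not big; in particular $(L')^3=0$, and hence its numerical dimension satisfies $\nu(L')\leqslant 2$. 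Here the hypothesis that $K_X+\mathbf{M}_X$ is \emph{not} pseudo-effective is essential: for $0<\varepsilon\ll 1$ the pair $(X,\varepsilon\Delta+\mathbf{M}_X)$ is again generalized klt and $K_X+\varepsilon\Delta+\mathbf{M}_X$ is not pseudo-effective (otherwise, letting $\varepsilon\to 0$, closedness of the pseudo-effective cone would force $K_X+\mathbf{M}_X$ to be pseudo-effective), so a $(K_X+\varepsilon\Delta+\mathbf{M}_X)$-MMP terminates with a Mori fibre space $g\colon \widetilde X\to Z$ with $\dim Z\leqslant 2$; this fibration, combined with $\nu(L')<3$, is what feeds an induction on dimension.

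It then remains to show that $L'$ is good, i.e.~that some positive multiple of $L'$ is semiample. When $\nu(L')=0$ this is immediate, as then $L'\equiv 0$. When $\nu(L')\in\{1,2\}$ I would invoke log abundance for generalized klt threefold pairs through the generalized canonical bundle formula (generalized adjunction): the Iitaka / nef fibration attached to $L'$ realizes $L'$ as numerically pulled back from a generalized klt pair $(Z',B_{Z'}+\mathbf{M}_{Z'})$ of dimension $<3$, whose nef part is again an $\mathbb{R}_{>0}$-linear combination of nef Cartier $b$-divisors, whose boundary has coefficients in $[0,1]$, and on which the corresponding generalized log canonical divisor is big; the statement in dimension $\leqslant 2$, which is \cite[Corollary 1.8]{HL20}, then produces an effective divisor in that numerical class, hence an $\mathbb{R}$-effective one, and one lifts it back up to conclude that $L'$ is $\mathbb{R}$-effective (in fact semiample). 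Finally, on a common resolution $p\colon W\to X$ and $q\colon W\to X'$ of $\phi$, the negativity lemma gives $p^{*}(K_X+\Delta+\mathbf{M}_X)\equiv q^{*}L'+E$ with $E\geqslant 0$ exceptional over $X'$, so from $L'\equiv D'\geqslant 0$ we obtain $K_X+\Delta+\mathbf{M}_X\equiv p_{*}(q^{*}D'+E)\geqslant 0$, which is the assertion.

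The main obstacle is precisely the goodness of $L'$, i.e.~log abundance for generalized klt threefold pairs: one has to arrange the generalized canonical bundle formula so that the descended lower-dimensional data $(Z',B_{Z'}+\mathbf{M}_{Z'})$ remains of the admissible type — nef part an $\mathbb{R}_{>0}$-linear combination of nef Cartier $b$-divisors, boundary coefficients in $[0,1]$ — so that the inductive hypothesis applies, and one must handle the numerical dimension $2$ case, where producing the fibration onto a surface and proving bigness of the descended class are the delicate points. The auxiliary Mori fibre space $g$ obtained from the non-pseudo-effectivity of $K_X+\mathbf{M}_X$ is the natural source of that fibration, but reconciling it with $L'$ and with generalized adjunction is where the real work lies.
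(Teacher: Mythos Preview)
The paper does not supply its own proof of this theorem: it is quoted from Han--Liu \cite{HL20}, and the surrounding text merely remarks that ``Han and Liu showed this theorem in a more general setting (\cite[Theorem 4.5]{HL20}) by proving the existence of a good minimal model after passing $X$ to a higher model if necessary,'' referring the reader to \cite[Section 4]{HL20} for details. Your outline follows exactly this strategy --- reduce to $\mathbb{Q}$-factorial, run a generalized MMP with scaling to a minimal model, use the non-pseudo-effectivity of $K_X+\mathbf{M}_X$ to produce an auxiliary Mori fibre space, and then establish abundance for $L'$ by descending via generalized adjunction to dimension $\leqslant 2$ --- so there is no divergence of approach to report.

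That said, your proposal is a sketch rather than a proof, and you correctly identify where the real content lies. Two points deserve sharpening. First, the Mori fibre space you obtain from a $(K_X+\varepsilon\Delta+\mathbf{M}_X)$-MMP lives on a birational model $\widetilde{X}$ that is \emph{not} in general the minimal model $X'$; reconciling these two models (or, as in \cite{HL20}, first lifting to a suitable higher model before running the MMPs so that the fibre space structure and the nefness of $L'$ coexist) is a genuine step, not a formality. Second, in the $\nu(L')=2$ case you appeal to ``the Iitaka/nef fibration attached to $L'$'', but a nef divisor of numerical dimension $2$ need not a priori define a fibration onto a surface; this is precisely where the Mori fibre space $g$ must be used to manufacture the map, and where the argument in \cite{HL20} does nontrivial work. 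Your closing paragraph acknowledges both issues, so the self-assessment is accurate: the architecture is right, but the load-bearing lemma (goodness of $L'$) is asserted rather than proved.
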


\noindent
With all the preparations settled down, 
we begin to prove \hyperref[mainthm-ample-3klt]{Theorem \ref*{mainthm-ample-3klt}}.
First, it follows from \cite[Corollary 1.4.3]{BCHM10} that
we can take a $\mathbb{Q}$-factorial terminalization $\tau:(Y,\Gamma)\to (X,\Delta)$ of $(X,\Delta)$ such that $(Y,\Gamma)$ is a $\mathbb{Q}$-factorial terminal pair.

\begin{thm}\label{thm-case-gammaneq0}
If $\Gamma\neq 0$, then $-(K_X+\Delta)$ is ample.
\end{thm}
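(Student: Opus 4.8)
The plan is to deduce everything from {\hyperref[prop-klt-eff-ample]{Proposition \ref*{prop-klt-eff-ample}}} applied with $L_X:=-(K_X+\Delta)$ itself. Indeed, once we know that $-(K_X+\Delta)$ is num-effective, that proposition tells us that $K_X+\Delta+tL_X=(t-1)\bigl(-(K_X+\Delta)\bigr)$ is ample for $t\gg 1$, hence $-(K_X+\Delta)$ is ample; note that the effective representative is automatically non-zero, since a strictly nef divisor cannot be numerically trivial. So the whole task is to prove that $-(K_X+\Delta)$ is num-effective, and this is where the hypothesis $\Gamma\neq 0$ will be used.

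I would pass to the $\mathbb{Q}$-factorial terminal model $(Y,\Gamma)$ and set $L:=-(K_Y+\Gamma)=\tau^*\bigl(-(K_X+\Delta)\bigr)$, a nef $\mathbb{Q}$-Cartier divisor; thus $K_Y=-L-\Gamma$, and $L\not\equiv 0$ (otherwise $-(K_X+\Delta)\equiv 0$, contradicting strict nefness). Since $\tau$ is small and birational, it suffices to show that $L$ is num-effective. For a real number $c>1$ put $D_c:=(c-1)L-\Gamma$; a one-line computation gives $D_c\equiv K_Y+cL$. As $c\to 1^+$ the class $D_c$ tends to $-\Gamma$ in $N^1(Y)_{\mathbb{R}}$, and $-\Gamma$ is \emph{not} pseudo-effective: since $\Gamma$ is effective and non-zero, $(-\Gamma)\cdot H^{\dim Y-1}<0$ for any ample $H$. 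As the pseudo-effective cone is closed, I may fix some $c_1>1$ for which $K_Y+c_1L\equiv D_{c_1}$ is \emph{not} pseudo-effective; this step, and only this step, uses $\Gamma\neq 0$.

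Finally I would invoke the Han--Liu non-vanishing theorem. Choose $m\in\mathbb{N}$ with $mL$ Cartier (and nef), let $\overline{mL}$ be the corresponding nef Cartier $b$-divisor, and consider the generalized pair $(Y,\Gamma+\mathbf{M}_Y)$ with $\mathbf{M}:=\frac{c_1}{m}\,\overline{mL}$, so that $\mathbf{M}_Y=c_1L$; this pair is generalized klt because $(Y,\Gamma)$ is terminal and $\mathbf{M}$ is a positive multiple of a nef Cartier $b$-divisor. Here $K_Y+\Gamma+\mathbf{M}_Y=(c_1-1)L$ is nef, hence pseudo-effective, while $K_Y+\mathbf{M}_Y=K_Y+c_1L\equiv D_{c_1}$ is not pseudo-effective, so {\hyperref[thm-hl20-numeff]{Theorem \ref*{thm-hl20-numeff}}} applies and shows that $(c_1-1)L$, hence $L$, is numerically equivalent to an effective $\mathbb{R}$-Cartier $\mathbb{R}$-divisor; pushing that divisor forward by $\tau$ shows that $-(K_X+\Delta)=\tau_*L$ is num-effective, and we conclude by the first paragraph. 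The only substantive input here is {\hyperref[thm-hl20-numeff]{Theorem \ref*{thm-hl20-numeff}}}, used as a black box; everything on our side is formal, the crux being the elementary fact that $-\Gamma$ is never pseudo-effective when $\Gamma$ is a non-zero effective divisor. The points that still require a careful (but routine) verification are the compatibility of num-effectivity with the small birational morphism $\tau$ and with positive rescaling, and that the generalized pair above is genuinely generalized klt.
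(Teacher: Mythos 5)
Your proposal is correct and follows essentially the same route as the paper's proof: pass to the terminal model, produce the generalized pair with $\mathbf{M}_Y$ a suitable multiple of $-(K_Y+\Gamma)$, use $\Gamma\neq0$ to make $K_Y+\mathbf{M}_Y$ non-pseudo-effective for the boundary parameter close to $1$, and invoke the Han--Liu numerical non-vanishing to get num-effectivity, concluding via \hyperref[prop-klt-eff-ample]{Proposition \ref*{prop-klt-eff-ample}}. You merely spell out the steps the paper leaves implicit (why $-\Gamma$ is not pseudo-effective, the choice of $b$-divisor, the generalized-klt check, the push-forward along $\tau$), which is a welcome but not structurally different elaboration.
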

\begin{proof}
Let us define $M_t:=-t(K_Y+\Gamma)$ with $t>1$.
Then the sum $K_Y+\Gamma+M_t=(1-t)(K_Y+\Gamma)$ is nef and hence pseudo-effective, and $K_Y+M_t=(1-t)(K_Y+\Gamma)-\Gamma$.
Clearly, one can choose suitable $t\rightarrow 1^+$ such that $K_Y+M_t$ is not pseudo-effective.
Therefore, applying {\hyperref[thm-hl20-numeff]{Theorem  \ref*{thm-hl20-numeff}}}, we see that $-(K_Y+\Gamma)$ is num-effective.
By the projection formula and the strict nefness of $-(K_X+\Delta)$, we see that $-(K_X+\Delta)$ is also num-effective. 
So our theorem follows from {\hyperref[prop-klt-eff-ample]{Proposition  \ref*{prop-klt-eff-ample}}} (with $L_X$ replaced by $-(K_X+\Delta)$ therein).
\end{proof}

\begin{rmq}\label{rmq-case-gamma=0}
If $\Gamma=0$, then our boundary divisor $\Delta=\tau_*\Gamma=0$.
In this case, $\tau$ is  crepant and thus our $X$ has at worst canonical singularities (with strictly nef anti-canonical divisor $-K_X$).
%Indeed, if $E$ is an exceptional divisor over $Y$, then the discrepancy $a(E,Y)=a(E,X)>0$.
%If $E$ is $\tau$-exceptional, then $a(E,X)=0$.
%Therefore, $X$ is a projective  threefold with at worst canonical singularities such that $-K_X$ is strictly nef. 
Then  our \texorpdfstring{{\hyperref[mainthm-ample-3klt]{Theorem \ref*{mainthm-ample-3klt}}}}{text} in this case follows from \cite[Main Theorem]{Ueh00}. 
Since in this case, our \texorpdfstring{{\hyperref[mainthm-RC]{Theorem \ref*{mainthm-RC}}}}{text} covers \cite[Remark 3.8]{Ueh00}.
For the convenience of readers, we shall give a simplified proof of \cite{Ueh00} here.
\end{rmq}

\textbf{In the following, we may assume that $\Gamma=0$.} 
Suppose the contrary that $-K_X$ is not ample.
Then we see have
$$\dimcoh^0(Y,-rK_Y)=\dimcoh^0(X,-rK_X)=0$$ 
where $r$ is the Cartier index of $K_X$ (cf.~{\hyperref[prop-klt-eff-ample]{Proposition \ref*{prop-klt-eff-ample}}}). 
Besides, since $-K_X$ is not ample, it is not big (cf.~{\hyperref[lem-big-ample]{Lemma \ref*{lem-big-ample}}}). 
Hence, $-K_Y$ is nef but not big, which implies that $K_Y^3=0$ (cf.~\cite[Proposition 2.61]{KM98}).  
Moreover, since $Y$ is a $\mathbb{Q}$-factorial terminal threefold such that $-K_Y$ is nef but not big, it follows from \cite[Corollary 6.2]{KMM94} that $-K_Y\cdot \hat{c}_2(Y)\geqslant 0$ where $\hat{c}_2(Y)$ is the second Chern class of the sheaf $\hat{\Omega}_Y^1$.
We note that, when $X$ is smooth in codimension two (which is the case when $X$ has only terminal singularities), $\hat{c}_2(X)$ coincides with the birational section Chern class $c_2(X)$ (cf.~e.g.~\cite[Proof of Claim 4.11]{GKP16b}).

Let us recall the following symbols.  
For a nef $\mathbb{R}$-Cartier $\mathbb{R}$-divisor $L$ on a normal projective variety $Y$,  the 
\textit{numerical dimension} $\nu(Y,L)$ of $L$ is defined to be the maximum $t$ such that $L^t\not\equiv0$.
We denote by $\kappa(Y,L)$ the \textit{Iitaka dimension} of $L$. 
Clearly, 
$\kappa(Y,L)\leqslant\nu(Y,L)\leqslant\dim Y$.

\begin{lemme}\label{lem-gamma0-h2>0}
If $\Gamma=0$ and $-K_Y$ is not big, then $\dimcoh^2(Y,-rK_Y)>0$.
\end{lemme}
\begin{proof}
Since $Y$ is rationally connected by {\hyperref[mainthm-RC]{Theorem \ref*{mainthm-RC}}}, we see that the Euler characteristic $\chi(\scrO_Y)>0$ (cf.~\cite[Corollary 4.18]{Deb01} and \cite[Theorem 5.22]{KM98}).
Applying the Riemann-Roch formula for $-rK_Y$, we get
$$\chi(-rK_Y)=\frac{1}{12}(-rK_Y)\hat{c}_2(Y)+\chi(\scrO_Y)>0,$$
by noting that $K_Y^3=0$ and $\frac{1}{12}(-rK_Y)\hat{c}_2(Y)\geqslant 0$ (cf.~\cite[Corollary 6.2]{KMM94}).
This together with $\dimcoh^0(Y,-rK_Y)=0$ (by our assumption that $-(K_X+\Delta)$ is not ample) implies our lemma.	
\end{proof}

\begin{lemme}\label{lem-gamma0-h1=h2}
If $\Gamma=0$ and $-K_Y$ is not big, then for a sufficiently ample general hyperplane section $S$ on $Y$, we have $\Coh^1(S,(-rK_Y+S)|_S)\simeq\Coh^2(Y,-rK_Y)$.
\end{lemme}
\begin{proof}
We consider the short exact sequence $$0\to\scrO_Y(-rK_Y)\to\scrO_Y(-rK_Y+S)\to\scrO_S((-rK_Y+S)|_S)\to 0$$ 
and the induced long exact sequence
$$\cdots\to \Coh^1(Y,-rK_Y+S)\to \Coh^1(S,(-rK_Y+S)|_S)\to \Coh^2(Y,-rK_Y)\to \Coh^2(Y,-rK_Y+S)\to\cdots$$
Since $S$ is ample and $-rK_Y$ is nef, by the Fujita vanishing theorem (or Kawamata-Viehweg vanishing theorem), we have $\Coh^i(Y,-rK_Y+S)=0$ for $i>0$.
Therefore,  $\Coh^1(S,(-rK_Y+S)|_S)\simeq\Coh^2(Y,-rK_Y)$ and our lemma is thus proved.
\end{proof}

\begin{lemme}\label{lem-gamma0-nu1}
If $\Gamma=0$ and $-K_Y$ is not big, then the numerical dimension  $\nu(Y,-K_Y)$ of $-K_Y$ is $1$.
\end{lemme}
\begin{proof}
If the numerical dimension $\nu(Y,-K_Y)=3$, then $-K_Y$ and hence $-(K_X+\Delta)$ are big, a contradiction to our assumption (cf.~{\hyperref[lem-big-ample]{Lemma \ref*{lem-big-ample}}}).
Clearly, $-K_Y\not\equiv0$, and thus we only need to exclude the case $\nu(Y,-K_Y)=2$.
Take a sufficiently ample general hypersurface $S$ on $Y$.
Then we have $K_Y^2\cdot S\neq 0$ since $K_Y^2\not\equiv 0$.
Therefore $-K_Y|_S$ is a nef and big divisor on $S$.
Applying Serre duality and Kawamata-Viehweg vanishing theorem, we have $\Coh^1(S,(-rK_Y+S)|_S)=0$ (where $r$ is the Cartier index of $K_Y$),  a contradiction to 
{\hyperref[lem-gamma0-h1=h2]{Lemma  \ref*{lem-gamma0-h1=h2}}} and 
{\hyperref[lem-gamma0-h2>0]{Lemma  \ref*{lem-gamma0-h2>0}}}. 
\end{proof}

\begin{thm}\label{thm-case-gamma=0}
If $\Gamma=0$, then $-(K_X+\Delta)$ is ample.	
\end{thm}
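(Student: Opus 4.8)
The plan is to derive a contradiction from the assumption that $-(K_X+\Delta)=-K_X$ is strictly nef but not ample, using all the accumulated structural information on the terminal threefold $Y$ (recall $\Gamma=0$, so $\tau\colon Y\to X$ is crepant, $Y$ is a $\mathbb{Q}$-factorial terminal threefold, $-K_Y$ is nef, not big, $K_Y^3=0$, $-K_Y\cdot\hat c_2(Y)\geqslant0$, $\dimcoh^0(Y,-rK_Y)=0$, $\dimcoh^2(Y,-rK_Y)>0$ by Lemma \ref{lem-gamma0-h2>0}, $Y$ is rationally connected by Theorem \ref{mainthm-RC}, and $\nu(Y,-K_Y)=1$ by Lemma \ref{lem-gamma0-nu1}). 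The strategy is the usual Serrano--Uehara dichotomy according to whether the Iitaka dimension $\kappa(Y,-K_Y)$ equals its numerical dimension $1$ or drops to $0$ (or $-\infty$); we have already shown $h^0(Y,-rK_Y)=0$, which forces $\kappa(Y,-K_Y)\leqslant 0$, so in fact $-K_Y$ is nef of numerical dimension $1$ but with $h^0(Y,-mK_Y)=0$ for all $m>0$ with $mK_Y$ Cartier.

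First I would run the minimal model program on $Y$: since $-K_Y$ is nef, $K_Y$ is not pseudo-effective (as $-K_Y\not\equiv0$ and is nef hence pseudo-effective, so $K_Y$ pseudo-effective would force $K_Y\equiv0$, contradicting $\nu(Y,-K_Y)=1$), hence $Y$ is uniruled and a $K_Y$-MMP terminates with a Mori fibre space $g\colon Y'\to B$. One must track how $-K_Y$ behaves under the (flips and) divisorial contractions; since $-K_Y$ is nef, each step is $K$-negative, and I would argue that the MMP here consists of steps that are also $(-K)$-trivial on the extremal ray or controlled enough that $\nu(-K_{Y'})$ stays $1$ and $h^0(-mK_{Y'})$ stays $0$. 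The upshot should be a Mori fibre space $g\colon Y'\to B$ with $\dim B=1$ or $2$; the numerical dimension $1$ of $-K_{Y'}$ together with the fact that $-K_{Y'}$ is relatively ample should pin down $\dim B=1$, so $B$ is a smooth curve and $g$ is a (terminal) del Pezzo fibration, with $-K_{Y'}\equiv g^*(\text{ample on }B)$ numerically — i.e. $-K_{Y'}$ is numerically the pullback of a divisor of positive degree on the curve $B$. Since $Y'$ (being birational to the rationally connected $Y$) is rationally connected, $B\cong\mathbb{P}^1$.

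Now the contradiction: on $Y'\to B\cong\mathbb{P}^1$, the relative anti-canonical divisor $-K_{Y'/B}=-K_{Y'}+g^*K_B=-K_{Y'}-g^*\mathscr{O}_{\mathbb{P}^1}(2)$ restricted to a general fibre $\overline{Y}_b$ (a terminal del Pezzo surface) is $-K_{\overline{Y}_b}$, which is ample; meanwhile $-K_{Y'}$ itself, being numerically $\equiv g^*(\deg>0 \text{ on }\mathbb{P}^1)$, has zero intersection with any curve contracted by $g$. But $-K_X$ (hence its pullback to any birational model dominating $X$) is \emph{strictly} nef, so $-K_X\cdot C>0$ for every curve $C$; pulling back along the composite $Y'\dashrightarrow Y\to X$ (resolving indeterminacy if needed and using that the $K$-MMP steps are $(-K)$-preserving so the pullback of $-K_X$ to a common resolution agrees with the strict transform of $-K_{Y'}$ up to effective exceptional divisors) forces every curve in a fibre of $g$ to have positive $(-K_{Y'})$-degree, contradicting that $-K_{Y'}$ is numerically $g^*(\text{something on the curve })\mathbb{P}^1$, which has degree $0$ on all vertical curves. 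This is precisely the obstruction that Araujo--Druel's Theorem \ref{t.AD} rules out — indeed $-(K_{Y'/B})$ being strictly nef (as the birational pullback of the strictly nef $-K_X$, after accounting for the ample contribution of $g^*K_B$ being \emph{negative}, one checks $-K_{Y'/B}=-K_{Y'}+g^*\mathscr{O}(2)$ is big and nef on fibres, and the numerical dimension count shows it is in fact nef and big) — but this contradicts Theorem \ref{t.AD}(2) applied to $g\colon Y'\to B$, since $(Y',0)$ is klt over the generic point of $B$ yet we would have produced $-(K_{Y'/B})$ nef and big.

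The main obstacle I anticipate is the bookkeeping in the MMP step: one must verify that running the $K_Y$-MMP preserves the key numerical invariants ($\nu=1$, $h^0=0$ for $-mK$, strict nefness of the pullback of $-K_X$), because a priori a flip or divisorial contraction could destroy nefness of $-K$ or increase its numerical dimension. The clean way is to observe that every extremal ray $R$ contracted satisfies $(-K_Y)\cdot R\leqslant 0$ and $(-K_Y)\cdot R\geqslant0$ by nefness, hence $(-K_Y)\cdot R=0$, so each contraction/flip is $(-K_Y)$-trivial; then $-K_Y$ descends to a nef class on $Y'$ with the same numerical dimension, and $h^0$ is a birational invariant for these $\mathbb{Q}$-Cartier divisors on terminal varieties. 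Once that is in place the geometric contradiction via Theorem \ref{t.AD} is immediate, and this — together with Theorem \ref{thm-case-gammaneq0} and Remark \ref{rmq-case-gamma=0} — completes the proof of Theorem \ref{mainthm-ample-3klt}.
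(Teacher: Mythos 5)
The key step in your MMP reduction is incorrect because of a sign error, and this breaks the whole approach. You claim that ``every extremal ray $R$ contracted satisfies $(-K_Y)\cdot R\leqslant 0$ and $(-K_Y)\cdot R\geqslant 0$ by nefness, hence $(-K_Y)\cdot R=0$.'' But a ray contracted in a $K_Y$-MMP is $K_Y$-\emph{negative}, i.e.\ $K_Y\cdot R<0$, which is exactly $(-K_Y)\cdot R>0$. There is no $K_Y$-negative $(-K_Y)$-trivial ray, since these two conditions are mutually exclusive. In particular the MMP steps are not $(-K_Y)$-trivial, and it is then far from clear that the nefness of $-K$, the numerical dimension $\nu=1$, the vanishing $h^0(-mK)=0$, or the strict nefness of the birational pullback of $-K_X$ survive the divisorial contractions and flips. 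Without this, the reduction to a del Pezzo fibration over $\mathbb P^1$, and then the appeal to Araujo--Druel (Theorem~\ref{t.AD}), does not get off the ground; indeed even after reaching a Mori fibre space $g:Y'\to B$ it is not clear that $-K_{Y'/B}$ is nef and big (you have $-K_{Y'/B}=-K_{Y'}+g^*K_B$, and $g^*K_B$ has \emph{negative} degree on $B\cong\mathbb P^1$, so the sign of $-K_{Y'/B}$ on horizontal curves is not controlled a priori).

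The paper's argument is entirely different and avoids the MMP on $Y$. Having reduced to $\nu(Y,-K_Y)=1$ (Lemma~\ref{lem-gamma0-nu1}), it restricts $-rK_Y$ to a very general sufficiently ample surface $S\subset Y$, shows via Proposition~\ref{lem-finite-reg-algpi1} and a Lefschetz-type argument that $\pi_1^{\mathrm{alg}}(S)$ is finite, and then invokes Miyaoka's theorem \cite{Miy88} for nef divisors on surfaces with zero self-intersection and finite algebraic fundamental group: either $\Coh^1(S,(r+1)K_Y|_S)=0$, or $-(r+1)K_Y|_S$ is num-effective. The first alternative is ruled out by the Riemann--Roch/vanishing computation $\Coh^1(S,(-rK_Y+S)|_S)\cong\Coh^2(Y,-rK_Y)\neq 0$ from Lemmas~\ref{lem-gamma0-h1=h2} and~\ref{lem-gamma0-h2>0} (which in turn rest on the inequality $-K_Y\cdot\hat c_2(Y)\geqslant 0$ of \cite{KMM94}). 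So $-K_Y|_S$ is $\QQ$-effective, and pushing down to $X$ via the crepant morphism $\tau$ yields $0<-K_X\cdot(-K_X|_T)=K_Y^2\cdot S=0$, the final contradiction. If you want a viable route that genuinely differs from the paper, you would need a substitute for the lost MMP control; as it stands the sign mistake is fatal to your line of reasoning.
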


\begin{proof}
Note that $\Delta=0$ in this case. 
In view of {\hyperref[lem-gamma0-nu1]{Lemma  \ref*{lem-gamma0-nu1}}}, we only need to show the case $\nu(Y,-rK_Y)=1$ is impossible.
Recall that $\tau:Y\to X$ is the $\mathbb{Q}$-factorial terminalization. 
Assume that  $\nu(Y,-rK_Y)=1$. 
Then, we have $(-K_Y|_S)^2=K_Y^2\cdot S=0$.
Denote by $U$ the regular locus of $Y$.
By {\hyperref[lem-finite-reg-algpi1]{Proposition   \ref*{lem-finite-reg-algpi1}}}, $\pi_1^{\textup{alg}}(U)$ is finite.
Take a sufficiently general very ample divisor $S$ on $Y$, which is not contracted by $\tau$. 
Then $\pi_1^{\textup{alg}}(U)\cong\pi_1^{\textup{alg}}(S\cap U)$ is finite (cf.~\cite[Theorem 1.1.3]{HL85}), noting that $U$ has the same homotopy type of the space obtained from $S\cap U$ by attaching cells of dimension $\ge 3$, but the fundamental group of a CW complex only depends on its $2$-skeleton (and hence $\pi_1(U)\cong\pi_1(S\cap U)$).
Since our $S$ is general and $Y$ has at worst terminal (and hence isolated) singularities (cf.~\cite[Corollary 5.18]{KM98}), $S\cap U$ coincides with $S$.
Therefore, $\pi_1^{\textup{alg}}(S)$ is finite.
On the one hand, since $\pi_1^{\textup{alg}}(S)$ is finite, applying \cite{Miy88} for the nef divisor $-(r+1)K_Y|_S$ with $(-(r+1)K_Y|_S)^2=0$ (cf. also \cite[Theorem 3.6]{Ser95}), we see that, either $\Coh^1(S,(r+1)K_Y|_S)=0$, or there exists a positive integer $n$ such that $\Coh^0(S,-(r+1)nK_Y|_S)\neq 0$.
On the other hand, it follows from {\hyperref[lem-gamma0-h1=h2]{Lemma  \ref*{lem-gamma0-h1=h2}}}, {\hyperref[lem-gamma0-h2>0]{Lemma  \ref*{lem-gamma0-h2>0}}} and Serre duality that
$\Coh^1(S,(r+1)K_Y|_S)=\Coh^1(S,(-rK_Y+S)|_S)\neq 0$.
In conclusion,  $-K_Y|_S$ is $\mathbb{Q}$-linearly equivalent to a non-zero effective divisor.
Let $T:=\tau(S)$, which is still a surface due to the choice of $S$. 
Then it follows from the commutative diagram that $-K_X|_T$ is $\mathbb{Q}$-linearly equivalent to a non-zero effective Weil $\mathbb{Q}$-divisor (as a curve) on $T$. 
%noting that $-(K_X+\Delta)|_T$ is still strictly nef.
Hence, it follows from the projection formula and from the strict nefness of $-K_X$ that $0<-K_X\cdot (-K_X|_T)=-K_Y\cdot (-K_Y|_S)=K_Y^2\cdot S=0$, which gives rise to a contradiction.
\end{proof}

%\begin{proof}
%In view of {\hyperref[lem-gamma0-nu1]{Lemma  \ref*{lem-gamma0-nu1}}}, we only need to show the case $\nu(Y,-rK_Y)=1$ is impossible.
%Assume that  $\nu(Y,-rK_Y)=1$. 
%Denote by $U$ the regular locus of $Y$.
%By {\hyperref[lem-finite-reg-algpi1]{Lemma  \ref*{lem-finite-reg-algpi1}}},  $\pi_1^{\textup{alg}}(U)$ is finite.
%Let $S$ be a general (very ample) hyperplane section of $Y$.
%Then $\pi_1^{\textup{alg}}(U)\cong\pi_1^{\textup{alg}}(S\cap U)$ is finite (cf.~\cite[Theorem 1.1.3]{HL85}), 
%noting that $U$ has the same homotopy type of the space obtained from $S\cap U$ by attaching cells of dimension $\geqslant 3$, 
%but the fundamental group of a CW complex only depends on its $2$-skeleton (and hence $\pi_1(U)\cong\pi_1(S\cap U)$). 
%Since our $S$ is general and $Y$ has at worst terminal (and hence isolated) singularities (cf.~\cite[Corollary 5.18]{KM98}), $S\cap U$ coincides with $S$.
%Therefore, $\pi_1^{\textup{alg}}(S)$ is finite. 
%Since $\nu(S,-K_Y|_S)=1$, we have $(-K_Y|_S)^2=K_Y^2\cdot S=0$.
%Now that $\pi_1^{\textup{alg}}(S)$ is finite, we can apply \cite[Theorem 2.1]{Miy88} (cf.~\cite[Theorem 3.6]{Ser95}) and then $-K_Y|_S$ is $\mathbb{Q}$-linearly equivalent to an effective divisor, noting that $H^1(S,(r+1)K_Y|_S)=H^1(S,-(r+1)K_Y|_S+K_S)=H^1(S,(-(r+1)K_Y+S)|_S)\neq 0$ (cf.~{\hyperref[lem-gamma0-h1=h2]{Lemma  \ref*{lem-gamma0-h1=h2}}} and 
%{\hyperref[lem-gamma0-h2>0]{Lemma  \ref*{lem-gamma0-h2>0}}}). 
%As a result, the nef dimension $\textup{nd}(-K_Y|_S)=2$, the numerical dimension $\nu(-K_Y|_S)=1$, and the Kodaira dimension $\kappa(-K_Y|_S)\geqslant 0$. 
%Then, it follows from \cite[Proposition 6.1]{Amb04} that $S$ is uniruled. 
%However, since our $S$ is sufficiently ample, our $K_S$ is also ample, a contradiction to \cite{BDPP13}.
%\end{proof}

\begin{proof}[Proof of \texorpdfstring{{\hyperref[mainthm-ample-3klt]{Theorem \ref*{mainthm-ample-3klt}}}}{text}]
It follows directly from {\hyperref[thm-case-gammaneq0]{Theorem \ref*{thm-case-gammaneq0}}} and  {\hyperref[thm-case-gamma=0]{Theorem \ref*{thm-case-gamma=0}}} (cf.~{\hyperref[rmq-case-gamma=0]{Remark \ref*{rmq-case-gamma=0}}}).
\end{proof}

\vskip 2\baselineskip

\begingroup
\setstretch{1.0}

\bibliographystyle{alpha}
\bibliography{strict-nef}

\endgroup
\end{document}